\documentclass[12pt]{amsart}
\usepackage{amsmath,amssymb,amsthm,amscd,mathtools}
\usepackage{graphicx}
\usepackage{booktabs}
\usepackage{enumitem}
\usepackage[hidelinks,pdfstartview=FitH]{hyperref}
\hypersetup{
  pdftitle={A proof of the framed LMOV conjecture for links},
  pdfauthor={Qingtao Chen, Kefeng Liu, Pan Peng, Shengmao Zhu},
  pdfkeywords={LMOV conjecture, framed HOMFLY-PT invariants, skein theory, Hecke algebras, quantum invariants}
}
\numberwithin{equation}{section}
\newtheorem{thm}{Theorem}[section]
\newtheorem{theorem}[thm]{Theorem}
\newtheorem{lemma}[thm]{Lemma}

\newtheorem{proposition}[thm]{Proposition}

\theoremstyle{definition}

\newtheorem{remark}[thm]{Remark}

\newcommand{\Z}{\mathbb Z}
\newcommand{\Q}{\mathbb Q}

\newcommand{\eps}{\varepsilon}

\newcommand{\ord}{\operatorname{ord}}

\newcommand{\Tr}{\operatorname{Tr}}

\newcommand{\Uq}[1]{U_{#1}(v)}
\newcommand{\HH}{\mathsf{H}}
\newcommand{\GG}{\mathsf{G}}

\begin{document}

\title[The framed LMOV conjecture]
{A proof of the framed LMOV conjecture for links}

\author[K. Liu]{Kefeng Liu}
\address{Mathematical Science Research Center\\Chongqing University of Technology\\Chongqing, China}
\email{liu@math.ucla.edu}

\author[P. Peng]{Pan Peng}

\address{Department of Mathematics, University of Arizona, 617 N. Santa Rita Ave., Tucson, AZ 85721, USA}
\email{pengpan@gmail.com}

\author[S. Zhu]{Shengmao Zhu}
\address{Department of Mathematics\\Zhejiang Normal University\\Jinhua, Zhejiang 321004, China}
\email{szhu@zju.edu.cn}
%\date{6 October 2026}
\keywords{LMOV conjecture, framed HOMFLY--PT invariants, skein theory, Hecke algebras, quantum invariants}

\begin{abstract}
We prove the Labastida--Mari\~no--Ooguri--Vafa conjecture for oriented links with arbitrary integer framings. Consequently, the framed reformulated invariants have finite integral LMOV expansions for every integer framing vector.
\end{abstract}

\maketitle

\section{Introduction}

The Labastida--Mari\~no--Ooguri--Vafa (LMOV) structure arose from the large-$N$ duality between Chern--Simons theory and topological strings; see the series of works \cite{Witten,OV,LMV,LM,MV}.  In mathematics, it predicts that the reformulated invariants, which are certain combinatorial expressions in colored HOMFLY--PT invariants, have only the universal pole $(q-q^{-1})^{-2}$ and admit an integral finite genus expansion.  Two of the authors, Liu and Peng treated the general zero-framed version in \cite{LP}.  The framed version and the related congruence-skein structures were studied in our previous work \cite{CLPZ}.  The purpose of this paper is to prove the framed LMOV conjecture for arbitrary integer framings, which includes the zero-framed theorem in \cite{LP} as a special case. We refer to Section 5 of \cite{CLPZ} for more background on the framed LMOV conjecture. Before stating the main results, let us fix the notation used throughout the paper.

\subsection{Notation and framing conventions}\label{subsec:notation}
We use the framed HOMFLY--PT skein in regular-isotopy normalization.  If $D_{\mathcal L}$ is a framed oriented diagram of a framed link $\mathcal L$, set
\[
\mathcal H(\mathcal L;q,a)=\langle D_{\mathcal L}\rangle,
\]
be the framed HOMFLY--PT polynomial of the link $\mathcal{L}$. It admits the skein relation
\begin{equation}\label{eq:skein-intro}
\mathcal H(\mathcal L_+)-\mathcal H(\mathcal L_-)=z\mathcal H(\mathcal L_0),\quad \text{and} \quad
\mathcal H(U)=\frac{a-a^{-1}}{z} \quad \text{for unknot $U$}.
\end{equation}
We write
\begin{equation}\label{eq:notation-qint}
 \{m\}=q^m-q^{-m},\qquad z=\{1\}=q-q^{-1},\qquad v=z^2,
 \qquad U_m(v)=\frac{\{m\}^2}{v}.
\end{equation}
Thus $U_m(v)\in\mathbb Z[v]$ is monic.  

For the annular Schur color $Q_A$ introduced in Section~\ref{sec:prelim}, a positive full ribbon twist has eigenvalue
\begin{equation}\label{eq:twist-eigen}
 a^{|A|}q^{\kappa_A},\qquad
 \kappa_A=\sum_iA_i(A_i-2i+1).
\end{equation}
For an oriented link $\mathcal L$ of $L$-components with any auxiliary component framing vector $\vec f=(f_1,...,f_L)$, we define the {\em framing-independent  invariant} by
\begin{equation}\label{eq:framing-independent-intro}
 W_{\vec A}(\mathcal L;q,a)
 :=q^{-\sum_b f_b\kappa_{A^b}}a^{-\sum_b f_b|A^b|}
 \mathcal H\!\left(\mathcal L_{\vec f}\star\bigotimes_{b=1}^LQ_{A^b};q,a\right).
\end{equation}
For an integer framing vector $\vec\tau=(\tau_1,\ldots,\tau_L)$, we define the {\em signed $q$-framed invariant} by 
\begin{equation}\label{eq:signed-frame-intro}
 W^{\vec\tau}_{\vec A}
 :=(-1)^{\sum_b\tau_b|A^b|}q^{\sum_b\tau_b\kappa_{A^b}}W_{\vec A}.
\end{equation}

For a partition $\mu$, put
\[
 \mathfrak z_\mu=\prod_{j\ge1}j^{m_j(\mu)}m_j(\mu)!,\qquad
 \{\mu\}=\prod_i\{\mu_i\}.
\]
A tuple of partitions is written with an arrow, for example
$\vec\lambda=(\lambda^1,\ldots,\lambda^L)$.  We write
$\ell(\vec\lambda)=\sum_c\ell(\lambda^c)$,
$\mathfrak z_{\vec\lambda}=\prod_c\mathfrak z_{\lambda^c}$,
$\{\vec\lambda\}=\prod_c\{\lambda^c\}$ and $p_{\vec\lambda}(x)=\prod_c p_{\lambda^c}(x^c)$. We also write
$\chi_{\vec A}(\vec\lambda)=\prod_c\chi_{A^c}(\lambda^c)$.  If $\vec\lambda$ is nonempty, let
$D_{\vec\lambda}$ be the greatest common divisor of all its nonzero parts.  When $k\mid D_{\vec\lambda}$, the notation $\vec\lambda/k$ means division of every part by $k$; similarly $k\vec\lambda$ means multiplication of every part by $k$.
The Adams operation is defined by
\[
 \Psi_k f(q,a)=f(q^k,a^k),\qquad \Psi_k\bigl(p_m(x^c)\bigr)=p_{km}(x^c).
\]
If $\ord(\alpha^2)=n$, then $\mathcal O_\alpha$ denotes the local coefficient ring at $q=\alpha$.

The {\em signed $q$-framed partition function} of a link $\mathcal{L}$ is defined by  
\begin{equation}\label{eq:intro-partition-function}
 Z^{\vec\tau}(\mathcal L;x)
 =\sum_{\vec A}W^{\vec\tau}_{\vec A}s_{\vec A}(x)
 =\sum_{\vec\lambda} Z^{\vec\tau}_{\vec\lambda}\frac{p_{\vec\lambda}(x)}{\mathfrak z_{\vec\lambda}},
\end{equation}
with $Z^{\vec\tau}_{\vec\varnothing}=1$, and the {\em signed $q$-framed free energy} is given by
\begin{equation}\label{eq:intro-free-energy}
 F^{\vec\tau}(\mathcal L;x)
 =\log Z^{\vec\tau}(\mathcal L;x)
 =\sum_{\vec\lambda\ne\vec\varnothing}F^{\vec\tau}_{\vec\lambda}
 \frac{p_{\vec\lambda}(x)}{\mathfrak z_{\vec\lambda}}.
\end{equation}

Set
\[
 \check F^{\vec\tau}_{\vec\lambda}
 :=\{\vec\lambda\}F^{\vec\tau}_{\vec\lambda},\qquad
 H^{\vec\tau}_{\vec\lambda}
 :=\frac{\{D\}^2\check F^{\vec\tau}_{\vec\lambda}}{\{\vec\lambda\}^2},
 \quad D=D_{\vec\lambda},\qquad r=\ell(\vec\lambda).
\]

The {\em signed $q$-framed reformulated power-sum coefficient invariant} and {\em Schur coefficient invariant} are given by 
\begin{equation}\label{eq:intro-key-normalization}
 C^{\vec\tau}_{\vec\lambda}
 =\frac{1}{\mathfrak z_{\vec\lambda}U_D(v)v}
 \sum_{k\mid D}\mu(k)k^{r-1}\Psi_k
 H^{\vec\tau}_{\vec\lambda/k}
\end{equation}
and 
\begin{equation}\label{eq:intro-schur}
    P_{\vec A}^{\vec\tau}=\sum_{\vec{\lambda}}\chi_{\vec A}(\vec \lambda)C^{\vec\tau}_{\vec\lambda}=\sum_{\vec{\lambda}}\frac{\chi_{\vec A}(\vec \lambda)}{\{\vec\lambda\}}\sum_{k\mid D}\frac{\mu(k)}k
 \Psi_k\left(
 \frac{F^{\vec\tau}_{\vec\lambda/k}}
 {\mathfrak z_{\vec\lambda/k}}
 \right)
\end{equation}
respectively. 
\begin{remark}
The framed LMOV conjecture, see Conjecture 5.2 in \cite{CLPZ}, states that $P_{\vec A}^{\vec\tau}\in v^{-1}\mathbb{Z}[v,a^{\pm 1}]$. Note that the notation $\mathcal F_{\vec{\lambda}}(\mathcal{L};q,a)$ in \cite{CLPZ} corresponds to $\frac{F^{\vec{\tau}}_{\vec\lambda}}{\mathfrak z_{\vec\lambda}}$ used here.  

\end{remark}

\subsection{Main results}
The first main result is the following congruence relation, which will be proved in Section \ref{sec:cable} based on the results established in Sections \ref{sec:hecke} and \ref{sec:exterior}.
\begin{theorem}\label{thm:framed-congruence}
Let $\mathcal L$ be an oriented link with integer framing vector $\vec\tau$, and let 
$\vec\nu$ be a nonempty partition tuple with total length $\ell(\vec\nu)=r$.  For every $n\ge2$ and every root $\alpha$ with $\ord(\alpha^2)=n$,
\begin{equation}\label{eq:framed-congruence}
 H^{\vec\tau}_{n\vec\nu}-n^{r-1}\Psi_nH^{\vec\tau}_{\vec\nu}
 \in(q-\alpha)^2\mathcal O_\alpha[a^{\pm1}].
\end{equation}
\end{theorem}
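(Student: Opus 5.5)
We plan to reduce the framed congruence to the zero-framed one of \cite{LP}, treating the framing as an extra operator that acts diagonally on Schur colours, and to use the cabling, Hecke-algebra and exterior-power results of Sections~\ref{sec:hecke} and~\ref{sec:exterior}.

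\emph{Step 1: the framing as a cable operator.} In the power-sum basis the partition function has the form
\[
Z^{\vec\tau}(\mathcal L;x)=\sum_{\vec\lambda}\langle \mathcal L\star\textstyle\bigotimes_c P_{\lambda^c}\rangle\, p_{\vec\lambda}(x)/\mathfrak z_{\vec\lambda},
\]
where $P_{\lambda}$ is the power-sum element of the annulus. The framing factor $(-1)^{\tau|A|}q^{\tau\kappa_A}$ is then interpreted as the eigenvalue of a signed $q$-twist operator $T^\tau$ (the full twist with $a$ specialised away). Via the Hecke algebra description of Section~\ref{sec:hecke}, $T^\tau$ becomes a central element of $H_m(q)$, namely a power of the Jucys--Murphy sum up to sign. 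Hence $F^{\vec\tau}_{\vec\lambda}$ is the connected part of the invariant of $\mathcal L$ cabled by $T^{\tau_c}P_{\lambda^c}$.

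\emph{Step 2: Adams-type compatibility at roots of unity.} The key local statement we aim for is
\[
T^\tau P_{n\lambda}\equiv n^{\ell(\lambda)-1}\Psi_n\bigl(T^\tau P_\lambda\bigr)\pmod{(q-\alpha)^2}
\]
in the appropriate completed skein, with the $\{\cdot\}$ normalisations absorbed. For $\tau=0$ this is the statement used in \cite{LP}. For general $\tau$ we use $\kappa_A$ together with the exterior-power (hook) decomposition from Section~\ref{sec:exterior}. The characters $\chi_A(n\lambda)$ vanish unless $A$ has an empty $n$-core. When the $n$-core is empty, $\kappa_A$ is governed by $\kappa$ of the $n$-quotient scaled by $n$, plus content terms, and $q^{n\kappa}$ at $q=\alpha$ matches $\Psi_n q^{\kappa}$. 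The sign $(-1)^{\tau|A|}$ is exactly what compensates the $n$-sign $\epsilon_n(A)$ and the $(-1)^{(n-1)}$ factors arising from $\alpha^n=\pm1$. This is why the signed normalisation \eqref{eq:signed-frame-intro} is the right one.

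\emph{Step 3: passage to free energy and $H$.} We take the logarithm and apply M\"obius inversion as in \eqref{eq:intro-key-normalization}. The plethystic structure shows that the congruence for the connected parts $\check F$ follows from Step~2 applied to all sub-tuples. Cross terms from products of lower $Z$'s cancel modulo $(q-\alpha)^2$ by induction on $\ell(\vec\nu)$. Multiplying by $\{D\}^2/\{\vec\lambda\}^2$ then produces $H$, with orders checked via $\{nm\}/\{m\}\big|_{q=\alpha}$.

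\emph{Main obstacle.} The hard part is getting the congruence to \emph{second} order, $(q-\alpha)^2$, rather than first. A first-order congruence follows from evaluating at $q=\alpha$. The second order needs the $q$-derivative of the twist eigenvalue, $\tau\kappa_A q^{\tau\kappa_A-1}$, to match under the $n$-quotient correspondence. We would attack this first by showing that $\sum_A\chi_A(n\lambda)\kappa_A(\cdots)$ equals $n\cdot\Psi_n$ of the corresponding sum. The tool is the fact that $\kappa_A$ is the central character of the class sum of transpositions, combined with Murnaghan--Nakayama.
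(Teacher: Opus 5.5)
Your Step~2 is where the whole theorem would have to be proved, and as stated it is neither meaningful nor true. The operator $\Psi_n$ acts on the coefficients. $\Psi_n\langle\mathcal L\star P_\lambda\rangle$ is the invariant of the \emph{same} link evaluated at $(q^n,a^n)$, and the substitution $q\mapsto q^n$ does not intertwine the skein relations at $q$ and at $q^n$. So no identity ``in the completed skein'' can compare it with $\langle\mathcal L\star P_{n\lambda}\rangle$. That comparison is a global statement which has to be proved link by link. In the paper it comes from a crossing-change induction: the local Hecke identities \eqref{eq:self-main} and \eqref{eq:mixed-main} are transported through the exterior (Lemma~\ref{lem:exterior-transport}), framing parity is tracked under smoothing (Lemma~\ref{lem:smoothing-framing}), and the induction reduces to framed unknots (Lemma~\ref{lem:root-framing}) and split links. (Section~\ref{sec:exterior} concerns the exterior of a crossing box, not exterior powers.)

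Moreover, the factor $n^{\ell(\lambda)-1}$ fails for moments. For the zero-framed unknot and $\lambda=(1,1)$, the two parallel curves form an unlink, so $\check Z^0_{(n,n)}(U)=(a^n-a^{-n})^2=\Psi_n\check Z^0_{(1,1)}(U)$, with factor $1$ rather than $n$. The power $n^{r-1}$ belongs only to the normalized connected quantity $H$. Citing \cite{LP} for $\tau=0$ does not rescue this. At most you can extract the zero-framed congruence for $H$ from the zero-framed integrality theorem by inverting the M\"obius sum in \eqref{eq:intro-key-normalization}. That still leaves the framing reduction, which is everything you actually need, unproved.

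Step~3 also aims at the wrong order. With $D=D_{\vec\nu}$,
\[
 H^{\vec\tau}_{n\vec\nu}-n^{r-1}\Psi_nH^{\vec\tau}_{\vec\nu}
 =\frac{\{nD\}^2}{\{n\vec\nu\}^2}\Bigl(\check F^{\vec\tau}_{n\vec\nu}-n^{r-1}\Psi_n\check F^{\vec\tau}_{\vec\nu}\Bigr),
\]
and the prefactor has a pole of order $2r-2$ at $q=\alpha$. So the theorem is a congruence for connected cumulants modulo $(q-\alpha)^{2r}$, not $(q-\alpha)^2$. The cross terms in the moment--cumulant formula are products of moments that you control only to second order, so no induction on sub-tuples reaches order $2r$. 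The paper gets this order from the sharp connected estimates (Proposition~\ref{prop:anchor}, Theorem~\ref{thm:gcd}) together with the $\{n\}^4$ error in the mixed identity. Together these make each mixed crossing change cost $\{n\}^{2r}$ before normalization.

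Finally, the framing is not diagonal on power sums. $q^{\tau\kappa_A}$ acts through the exponentiated cut-and-join operator, which mixes cycle types on one component and, being second order, does not pass through $\log Z$. Your $n$-core/$n$-quotient analysis only compares eigenvalues. For a single $n$-cycle it amounts to Lemma~\ref{lem:root-framing}, but it gives no control of this mixing at order $2r$ when a component carries several cycles. The paper avoids the problem by resetting framings to zero and encoding them as slopes $k_i/D_i$. For coprime slopes, the Rosso--Jones formula identifies the slope-framed cycle invariants with those of an integer-framed cable, where Theorem~\ref{thm:equal-cycle} applies. Polynomial interpolation in the slopes (Lemma~\ref{lem:interpolation}) then covers all slopes, in particular $k_i=D_i\tau_i$.
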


Using Theorem \ref{thm:framed-congruence} and Theorem \ref{thm:gcd} in Section \ref{sec:connected},  we prove $
 vC^{\vec\tau}_{\vec\lambda}\in\mathbb Q[v,a^{\pm1}]
$  in Section \ref{sec:rational}, 
and hence we have
\[
vP_{\vec A}^{\vec\tau}
=
\sum_{\vec\lambda}
\chi_{\vec A}(\vec\lambda)
vC_{\vec\lambda}^{\vec\tau}
\in
\mathbb Q[v,a^{\pm1}].
\]
On the other hand, using arithmetic methods, we obtain in Section \ref{sec:arithmetic},
\[
 P^{\vec\tau}_{\vec A}\in\Z[v,a^{\pm1},\{m\}^{-2}:m\ge1].
\]

Finally, combining the above results, we complete the proof of the framed LMOV conjecture. 
\begin{theorem}\label{thm:framed-LMOV}
Let $\mathcal L$ be an oriented link with integer framing vector $\vec\tau$.  For every nonempty tuple of partitions $\vec A$,
\begin{equation}\label{eq:intro-framed}
 vP^{\vec\tau}_{\vec A}(q,a)\in\mathbb Z[v,a^{\pm1}].
\end{equation}
Equivalently,
\begin{equation}\label{eq:intro-framed-expansion}
 P^{\vec\tau}_{\vec A}(q,a)
 =\sum_{g\ge0}\sum_{Q\in\mathbb Z}
 N^{\vec\tau}_{\vec A;g,Q}\,z^{2g-2}a^Q,
\end{equation}
where the coefficients are integers and only finitely many terms are nonzero for fixed $\mathcal L,\vec\tau,\vec A$.
\end{theorem}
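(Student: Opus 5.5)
The plan is to combine the two intermediate results announced just before the statement. The rationality statement, which uses Theorem~\ref{thm:framed-congruence} and Theorem~\ref{thm:gcd}, gives $vP^{\vec\tau}_{\vec A}\in\mathbb Q[v,a^{\pm1}]$. The arithmetic statement gives $P^{\vec\tau}_{\vec A}\in\Z[v,a^{\pm1},\{m\}^{-2}:m\ge1]$. Write $X=vP^{\vec\tau}_{\vec A}$. Then $X\in\mathbb Q[v,a^{\pm1}]$, and also $X\in\Z[v,a^{\pm1},\{m\}^{-2}]$, since $v\in\Z[v]$. The theorem then reduces to a purely algebraic lemma:
\[
\mathbb Q[v,a^{\pm1}]\cap \Z[v,a^{\pm1},\{m\}^{-2}:m\ge1]=\Z[v,a^{\pm1}],
\]
with both rings viewed inside $\mathbb Q(q,a)$.

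To prove the lemma, I would first fix a power of $a$, so it suffices to treat coefficients of $a^Q$ separately. Both rings are free over the $a$-part, so this reduces to showing that a polynomial $f\in\mathbb Q[v]$ lying in $\Z[v,\{m\}^{-2}]$ has integer coefficients. Write $f=g/\prod_{m\in S}\{m\}^{2e_m}$ with $g\in\Z[q^{\pm1}]$. Clearing denominators gives $f\cdot \prod\{m\}^{2e_m}=g$. Next, $\{m\}^2=vU_m(v)$, where $U_m(v)$ is monic in $\Z[v]$. Also $v=q^2-2+q^{-2}$, and $\Z[q^{\pm1}]\cap\mathbb Q[v]=\Z[v]$ because $v$ is monic in $q^2+q^{-2}$ and the symmetric Laurent polynomials are $\Z[q^2+q^{-2}]$-spanned. (All quantities here are even in $q$ and symmetric under $q\to q^{-1}$, so this applies.) Hence $g\in\Z[v]$, and $f\cdot M=g$ with $M=\prod (vU_m)^{e_m}$ monic in $\Z[v]$.

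Gauss's lemma then finishes the argument. Let $d$ be the least common denominator of $f$, so that $df$ is a primitive integer polynomial. Then $dg=(df)M$ is primitive, being a product of primitive polynomials, since a monic polynomial is primitive. But $dg$ is divisible by $d$ in $\Z[v]$, so $d=1$. Thus $f\in\Z[v]$, and therefore $vP^{\vec\tau}_{\vec A}\in\Z[v,a^{\pm1}]$.

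For the equivalent form \eqref{eq:intro-framed-expansion}, expand $vP=\sum N_{g,Q}v^g a^Q$ as a finite sum and divide by $v=z^2$. The main obstacle is not in this assembly step. It lies entirely in the two inputs: the rationality $vC^{\vec\tau}_{\vec\lambda}\in\mathbb Q[v,a^{\pm1}]$, which needs the second-order vanishing at all roots of unity from Theorem~\ref{thm:framed-congruence} to cancel every factor $U_D$, and the arithmetic integrality. Here I take both as established in the earlier sections. The only care needed is checking that the finiteness of the $\vec\lambda$-sum (for $|\lambda^c|=|A^c|$) makes $P_{\vec A}$ a finite combination, so that both memberships genuinely apply to the same element.
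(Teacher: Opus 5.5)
Your proposal is correct and is essentially the paper's proof: it combines Proposition~\ref{prop:rational-polynomiality} with Proposition~\ref{prop:localization}, then uses that the only denominators are products of the monic polynomials $\{m\}^2=vU_m(v)$. The paper does this last step by dividing $vA(v,a)$ by the monic $B(v)$ over $\mathbb Z[a^{\pm1}][v]$, whereas you extract $a^Q$-coefficients and apply Gauss's lemma; the two arguments are equivalent, and your detour through $\mathbb Z[q^{\pm1}]\cap\mathbb Q[v]$ is unnecessary because an element of $\mathbb Z[v,\{m\}^{-2}]$ already has a numerator in $\mathbb Z[v]$.
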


\subsection{Related works}
The integrality conjecture formulated in~\cite{OV,LMV,LM} was referred to as the Labastida--Mari\~no--Ooguri--Vafa (LMOV) conjecture by Liu and Peng in~\cite{LP}. They obtained the zero-framed version of Theorem~\ref{thm:framed-LMOV} using the cabling technique introduced by Lin--Zheng~\cite{LinZheng}; see Proposition~\ref{prop:standard-LMOV} for a comparison between their formulation of ``Conjecture (LMOV)'' on p.~481 of~\cite{LP} and the zero-framed version of Theorem~\ref{thm:framed-LMOV} stated here. In 2012, building on the work of Mari\~no and Vafa~\cite{MV}, Liu and Peng~\cite{LP2} formulated the framed LMOV conjecture and proposed a proof framework using the HOMFLY--PT skein theory developed in~\cite{AM,MM,MortonMurphy,MortonPower}.
An essential missing ingredient in this framework is a collection of congruence skein relations for power-sum colored invariants.
 Some special congruence skein relations were subsequently studied by Chen–Liu–Peng–Zhu \cite{CLPZ}. Moreover, the Hecke lifting conjecture (Conjecture 5.3 in \cite{CLPZ}) is in fact a special case of a consequence of Theorem \ref{thm:framed-congruence} proved here. In \cite{LZ}, Luo–Zhu treated the framed LMOV integrality for the framed unknot using elementary number theory. More recently, Zhu \cite{Zhu} proved the framed LMOV conjecture for all torus links by expressing the reformulated invariants as contributions from connected combined graphs, via the vacuum expectation value and operator formalism developed in \cite{OP,Konishi}.

\subsection{AI Disclosure}
Building on an earlier 2012 note by Liu and Peng~\cite{LP2} which established the proof strategy, ChatGPT and GPT-6 Astra were used as auxiliary tools for literature searches, language editing, and checking mathematical constructions and arguments. Notably, GPT-6 Astra assisted in deriving the local crossing identities \eqref{eq:self-main} and \eqref{eq:mixed-main} from the congruence skein relations in~\cite{CLPZ}. The authors have independently verified all mathematical arguments and take full responsibility for the content of this paper.

%\textbf{Acknowledgements.}

\section{Partitions, skein colors, and the framed primitive transform}\label{sec:prelim}

\subsection{Partitions and symmetric functions}
A partition $\lambda=(\lambda_1,\lambda_2,\ldots)$ has weight $|\lambda|=\sum_i\lambda_i$ and length $\ell(\lambda)$.  Let $m_j(\lambda)$ be the multiplicity of the part $j$ and put
\begin{equation}\label{eq:zlambda}
 \mathfrak z_\lambda=\prod_{j\ge1}j^{m_j(\lambda)}m_j(\lambda)!,
 \qquad
 \kappa_\lambda=\sum_i\lambda_i(\lambda_i-2i+1).
\end{equation}
For a partition $\mu$ define
\begin{equation}\label{eq:quantum-mu}
 \{\mu\}=\prod_{i=1}^{\ell(\mu)}\{\mu_i\}.
\end{equation}
For partitions $A$ and $\lambda$, let $p_{\lambda}(x)$ and $s_A(x)$ be the power sum symmetric function and Schur symmetric function respectively, they are related by the Frobenius formulas:
\begin{equation}\label{eq:frob}
 s_A(x)=\sum_{\lambda\vdash|A|}\frac{\chi_A(\lambda)}{\mathfrak z_\lambda}p_\lambda(x),
 \qquad
 p_\lambda(x)=\sum_{A\vdash|\lambda|}\chi_A(\lambda)s_A(x).
\end{equation}

A partition tuple is always written with an arrow:
$\vec\lambda=(\lambda^1,\ldots,\lambda^L)$.  We use componentwise conventions:
\[
 \mathfrak z_{\vec\lambda}=\prod_c\mathfrak z_{\lambda^c}, \
 \ell(\vec\lambda)=\sum_c\ell(\lambda^c),\ 
 \{\vec\lambda\}=\prod_c\{\lambda^c\},\ 
 p_{\vec\lambda}(x)=\prod_c p_{\lambda^c}(x^c), \  s_{\vec A}(x)=\prod_c s_{A^c}(x^c).
\]
We also write
$\chi_{\vec A}(\vec\lambda)=\prod_c\chi_{A^c}(\lambda^c)$.
For a tuple of partitions $\vec A$, we also use
$|\vec A|=\sum_c|A^c|$ and
$\vec A^t=((A^1)^t,\ldots,(A^L)^t)$.
For a nonempty $\vec\lambda$, let $D_{\vec\lambda}$ denote the greatest common divisor of all nonzero parts.  If $k\mid D_{\vec\lambda}$, then $\vec\lambda/k$ means that every part of every component partition is divided by $k$.

\subsection{The HOMFLY--PT skein of the annulus}
Let $\mathcal C$ be the framed HOMFLY--PT skein of the annulus over the usual localized coefficient ring.  Its positive subalgebra $\mathcal C_+$ is canonically identified with the ring of symmetric functions.  Let $Q_A\in\mathcal C_+$ be the annular closure of the type-$A$ Hecke idempotent corresponding to the partition $A$; under the symmetric-function identification, $Q_A$ corresponds to $s_A$ \cite{AM,Lukac}.  Likewise $p_m$ corresponds to the annular power-sum color $P_m$ \cite{MortonPower,MM}.  Thus
\begin{equation}\label{eq:Pmu}
 P_\mu=\prod_iP_{\mu_i}=\sum_A\chi_A(\mu)Q_A,
 \qquad
 Q_A=\sum_\mu\frac{\chi_A(\mu)}{\mathfrak z_\mu}P_\mu.
\end{equation}
The polynomial cycle color is
\begin{equation}\label{eq:Bm}
 B_m=\{m\}P_m.
\end{equation}
Let $\HH_m$ be the framed HOMFLY--PT skein of $m$-tangles, i.e. the Hecke algebra over the usual localized coefficient ring. An integral Newton representative $b_m\in\HH_m$ is
\begin{equation}\label{eq:newton-coupon}
 b_m=z\sum_{j=0}^{m-1}g_1\cdots g_{m-1-j}
 g^{-1}_{m-j}\cdots g^{-1}_{m-1},
\end{equation}
with the evident convention for empty products.  Its annular closure is $B_m$, and $b_m$ lies in the integral Hecke algebra because $g_i^{-1}=g_i-z$.

\subsection{Framing-independent and integer-framed colored invariants}
Let $\mathcal L=K_1\cup\cdots\cup K_L$ be an oriented link and let $\vec f=(f_1,\ldots,f_L)$ be any auxiliary framing vector. Write $\mathcal{L}_{\vec f}$ for the resulting framed link and set
\[
 Q_{\vec A}:=\bigotimes_{b=1}^LQ_{A^b}.
\]
Decorating the $b$th framed tubular neighborhood by $Q_{A^b}$ gives the framed satellite $\mathcal L_{\vec f}\star Q_{\vec A}$.  We define the framing-independent colored HOMFLY--PT invariant by
\begin{equation}\label{eq:framing-independent-colored}
 W_{\vec A}(\mathcal L;q,a)
 :=q^{-\sum_b f_b\kappa_{A^b}}
 a^{-\sum_b f_b|A^b|}
 \mathcal H(\mathcal L_{\vec f}\star Q_{\vec A};q,a).
\end{equation}
The twist eigenvalue \eqref{eq:twist-eigen} shows that the right-hand side is independent of $\vec f$.  
Note that no division by the value of the colored unknot is performed; so $W_{\vec A}$ is the usual unreduced colored HOMFLY--PT invariant, normalized to be framing independent.

For an integer framing vector $\vec\tau$, our {\em signed $q$-framing invariant} is
\begin{equation}\label{eq:signed-frame}
 W^{\vec\tau}_{\vec A}
 =(-1)^{\sum_b\tau_b|A^b|}
 q^{\sum_b\tau_b\kappa_{A^b}}W_{\vec A},
\end{equation} 

For a partition tuple $\vec\mu$, the {\em signed $q$-framed moment} is defined by character transform,
\begin{equation}\label{eq:Zmu}
 Z^{\vec\tau}_{\vec\mu}
 =\sum_{\vec A}\chi_{\vec A}(\vec\mu)W^{\vec\tau}_{\vec A},
 \qquad
 \check Z^{\vec\tau}_{\vec\mu}=\{\vec\mu\}Z^{\vec\tau}_{\vec\mu}.
\end{equation}

\begin{lemma}\label{lem:framed-parallel}
Suppose the parts \(m_1,\ldots,m_s\) belong to one original component with integer framing \(\tau\). The corresponding signed $q$-framed moment is
\[
(-a^{-1})^{\tau(m_1+\cdots+m_s)}
\]
times the ordinary framed skein evaluation of the \(s\) parallel curves in the \(\tau\)-framed tubular neighborhood.
\end{lemma}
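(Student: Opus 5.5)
The plan is to remove the signed $q$-framing factor by choosing the auxiliary framing vector equal to the integer framing $\vec\tau$. The framing-independent invariant $W_{\vec A}$ in \eqref{eq:framing-independent-colored} does not depend on $\vec f$, because of the twist eigenvalue \eqref{eq:twist-eigen}. So we may take $\vec f=\vec\tau$, which gives
\[
 W_{\vec A}=q^{-\sum_b\tau_b\kappa_{A^b}}a^{-\sum_b\tau_b|A^b|}\,\mathcal H(\mathcal L_{\vec\tau}\star Q_{\vec A};q,a).
\]
Substituting this into \eqref{eq:signed-frame}, the factors $q^{\pm\sum_b\tau_b\kappa_{A^b}}$ cancel exactly. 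What remains is
\[
 W^{\vec\tau}_{\vec A}=\prod_b(-a^{-1})^{\tau_b|A^b|}\,\mathcal H(\mathcal L_{\vec\tau}\star Q_{\vec A};q,a).
\]
This is the only place where the $q$-dependence of the framing enters, and it disappears entirely.

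Next, insert this into the character transform \eqref{eq:Zmu}. For the relevant component $b$, the character $\chi_{A^b}(\mu^b)$ vanishes unless $|A^b|=|\mu^b|=m_1+\cdots+m_s$. Hence on every nonzero term the prefactor for that component is the constant $(-a^{-1})^{\tau(m_1+\cdots+m_s)}$, and it can be pulled outside the sum over $A^b$. The same holds for every other component, with its own constant.

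The remaining sum is $\sum_{\vec A}\chi_{\vec A}(\vec\mu)\,\mathcal H(\mathcal L_{\vec\tau}\star Q_{\vec A})$. Since the satellite operation is multilinear in the decorations of the separate tubular neighborhoods, we can move the sum inside the decoration, component by component. By the Frobenius relation \eqref{eq:Pmu}, $\sum_{A}\chi_A(\mu)Q_A=P_\mu=P_{m_1}\cdots P_{m_s}$. In the annular skein $\mathcal C_+$, the product is realized by placing the factors as parallel (concentric) curves. Therefore $P_\mu$ decorating the $\tau$-framed neighborhood is exactly the skein evaluation of the $s$ parallel curves colored $P_{m_1},\ldots,P_{m_s}$ in that framed solid torus. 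This gives the claimed formula, with the other components contributing in the same way.

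I do not expect a genuine obstacle. The only points needing care are the following:
\begin{itemize}
\item the independence of $W_{\vec A}$ from $\vec f$, which justifies setting $\vec f=\vec\tau$;
\item the fact that $|A^b|$ is constant on the support of $\chi_{A^b}(\mu^b)$, which lets the sign and $a$-power factor out;
\item the identification of the product in $\mathcal C_+$ with parallel placement inside the framed neighborhood, where the framing $\tau$ is used for both the satellite and the parallels.
\end{itemize}
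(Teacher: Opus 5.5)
Your proposal is correct and follows essentially the same route as the paper. Both take the auxiliary framing equal to $\vec\tau$, so that the factors $q^{\pm\tau\kappa_A}$ cancel and leave $(-a^{-1})^{\tau|A|}$. This is the constant $(-a^{-1})^{\tau M}$ on the support of $\chi_A(\mu)$, and the remaining sum collapses via $\sum_A\chi_A(\mu)Q_A=P_{m_1}\cdots P_{m_s}$ into the parallel decoration of the $\tau$-framed neighborhood. Your version is also slightly more careful than the paper's in tracking the decorations and constants on the other components explicitly.
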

\begin{proof}
Put \(\mu=(m_1,\ldots,m_s)\) and \(M=|\mu|=m_1+\cdots+m_s\), and assume that these parts belong to the first component \(K_1\). By the twist eigenvalue and the definition of the signed \(q\)-framing,
\[
\begin{aligned}
Z^\tau_\mu
&=
\sum_{A\vdash M}\chi_A(\mu)
(-a^{-1})^{\tau M}
\mathcal H\!\left(K_1^\tau\star Q_A\cup K_2\cup\cdots\cup K_L;q,a\right)\\
&=
(-a^{-1})^{\tau M}
\mathcal H\!\left(K_1^\tau\star P_\mu\cup K_2\cup\cdots\cup K_L;q,a\right),
\end{aligned}
\]
where
\[
P_\mu=P_{m_1}\cdots P_{m_s}
=\sum_{A\vdash M}\chi_A(\mu)Q_A.
\]
Thus the cycle moment is evaluated by the \(s\) parallel cycle colors in the \(\tau\)-framed tubular neighborhood, multiplied by
\[
(-a^{-1})^{\tau M}.
\]
\end{proof}

\subsection{Partition function and reformulated invariants}
Define the partition function of the signed $q$-framed invariant as
\begin{equation}\label{eq:partition-function}
 Z^{\vec\tau}(x)
 =\sum_{\vec A}W^{\vec\tau}_{\vec A}s_{\vec A}(x)
 =\sum_{\vec\lambda} Z^{\vec\tau}_{\vec\lambda}\frac{p_{\vec\lambda}(x)}{\mathfrak z_{\vec\lambda}},
\end{equation}
with $Z^{\vec\tau}_{\vec\varnothing}=1$, and the free energy 
\begin{equation}\label{eq:free-energy}
 F^{\vec\tau}(x)
 =\log Z^{\vec\tau}(\mathcal L;x)
 =\sum_{\vec\lambda\ne\vec\varnothing}F^{\vec\tau}_{\vec\lambda}
 \frac{p_{\vec\lambda}(x)}{\mathfrak z_{\vec\lambda}}.
\end{equation}
Put
\begin{equation}\label{eq:checkFlambda}
 \check F^{\vec\tau}_{\vec\lambda}
 :=\{\vec\lambda\}F^{\vec\tau}_{\vec\lambda}.
\end{equation}
If the $r=\ell(\vec\lambda)$ cycle parts are temporarily labeled, {\em moment--cumulant inversion} is
\begin{equation}\label{eq:cumulant}
 \check F^{\vec\tau}_{\vec\lambda}
 =\sum_{\pi\in\Pi_r}(-1)^{|\pi|-1}(|\pi|-1)!
 \prod_{B\in\pi}\check Z^{\vec\tau}_B.
\end{equation}
Here $\Pi_r$ denotes the set of set partitions of the temporarily
labeled cycle parts $\{1,\ldots,r\}$.  If
\[
\pi=\{B_1,\ldots,B_k\}\in\Pi_r,
\]
then $|\pi|=k$, and for a block $B\subset\{1,\ldots,r\}$,
$\check Z^{\vec\tau}_B$ denotes the moment obtained by retaining
exactly the cycle parts whose labels belong to $B$, with their
original link-component assignments unchanged.

For example, suppose first that $\mathcal L=K$ is a knot and
$\lambda=(m,n)$.  Label the two cycle parts by $1$ and $2$.  Since
\[
\Pi_2=\bigl\{\{1,2\},\,\{1\}\{2\}\bigr\},
\]
formula~\eqref{eq:cumulant} becomes
\[
\check F^\tau_{(m,n)}
=
\check Z^\tau_{(m,n)}
-
\check Z^\tau_{(m)}
\check Z^\tau_{(n)}.
\]
As a second example, let $\mathcal L=K_1\cup K_2$ and take
\[
\vec\lambda=((m),(n)).
\]
Label the cycle on $K_1$ by $1$ and the cycle on $K_2$ by $2$.
Then
\[
\check F^{\vec\tau}_{((m),(n))}
=
\check Z^{\vec\tau}_{((m),(n))}
-
\check Z^{\vec\tau}_{((m),\varnothing)}
\check Z^{\vec\tau}_{(\varnothing,(n))}.
\]
Thus $\check F^{\vec\tau}_{((m),(n))}$ measures precisely the
connected contribution between the two labeled cycle colors; in
particular it vanishes when the two labeled groups split.

Let $Y=(Y^1,\ldots,Y^L)$ be the divided alphabet
\begin{equation}\label{eq:divided-alphabet}
 p_m(Y^c)=\frac{p_m(y^c)}{\{m\}}.
\end{equation}
Then, by \eqref{eq:free-energy} we have
\[
\log Z^{\vec{\tau}}[Y]=\sum_{\vec\lambda\ne\vec\varnothing}F^{\vec\tau}_{\vec\lambda}
 \frac{p_{\vec\lambda}(Y)}{\mathfrak z_{\vec\lambda}}=\sum_{\vec\lambda\ne\vec\varnothing}F^{\vec\tau}_{\vec\lambda}
 \frac{p_{\vec\lambda}(y)}{\mathfrak z_{\vec\lambda}\{\vec \lambda\}}
\]

Taking the plethystic logarithm, we write
\begin{equation}\label{eq:PLog}
 \operatorname{PLog}Z^{\vec\tau}[Y]
 =\sum_{k\ge1}\frac{\mu(k)}k\Psi_k\log Z^{\vec\tau}[Y]
 =\sum_{\vec\lambda} C^{\vec\tau}_{\vec\lambda} p_{\vec\lambda}(y)
 =\sum_{\vec A}P^{\vec\tau}_{\vec A}s_{\vec A}(y).
\end{equation}
Let $D=D_{\vec\lambda}$, then
\begin{equation}\label{eq:Clambda}
 C^{\vec\tau}_{\vec\lambda}
 =\sum_{k\mid D}\frac{\mu(k)}k
 \Psi_k\left(
 \frac{F^{\vec\tau}_{\vec\lambda/k}}
 {\mathfrak z_{\vec\lambda/k}\{\vec\lambda/k\}}
 \right)
\end{equation}
and 
\begin{equation}\label{eq:PA}
    P_{\vec A}^{\vec\tau}=\sum_{\vec{\lambda}}\chi_{\vec A}(\vec \lambda)C^{\vec\tau}_{\vec\lambda}, 
\end{equation}
are referred to as the reformulated power-sum coefficient invariant and Schur  coefficient invariant, respectively.

Put
\begin{equation}\label{eq:Hlambda-def}
 H^{\vec\tau}_{\vec\lambda}
 =\frac{\{D\}^2\check F^{\vec\tau}_{\vec\lambda}}{\{\vec\lambda\}^2},
 \qquad D=D_{\vec\lambda}.
\end{equation}
Using $\mathfrak z_{\vec\lambda}=k^r\mathfrak z_{\vec\lambda/k}$ and $\Psi_k\{D/k\}=\{D\}$ gives
\begin{equation}\label{eq:key-normalization}
 vC^{\vec\tau}_{\vec\lambda}
 =\frac{1}{\mathfrak z_{\vec\lambda}\,\Uq{D}}
 \sum_{k\mid D}\mu(k)k^{r-1}\Psi_k
 H^{\vec\tau}_{\vec\lambda/k}.
\end{equation}

\begin{proposition}\label{prop:standard-LMOV}
Let $f^{\vec\tau}_{\vec A}(q,a)$ be the reformulated colored invariants determined by
\begin{equation}\label{eq:standard-reformulated}
 \log Z^{\vec\tau}(x)
 =\sum_{d\ge1}\frac1d\sum_{\vec A\ne\vec\varnothing}
 f^{\vec\tau}_{\vec A}(q^d,a^d)
 \prod_{c=1}^{L}s_{A^c}\bigl((x^c)^d\bigr).
\end{equation}
For partitions $A,B$ of the same size define, in the present normalization,
\begin{equation}\label{eq:MAB}
 M_{AB}(q)=\sum_{\mu\vdash |A|}
 \frac{\chi_A(\mu)\chi_B(\mu)}{\mathfrak z_\mu}
 \prod_{i=1}^{\ell(\mu)}\{\mu_i\}.
\end{equation}
Then
\begin{equation}\label{eq:fMP}
 f^{\vec\tau}_{\vec A}(q,a)
 =\sum_{\vec B}P^{\vec\tau}_{\vec B}(q,a)
 \prod_{c=1}^{L}M_{A^cB^c}(q).
\end{equation}
Consequently, for $\vec\tau=0$, the coefficients $P^0_{\vec B}$ are exactly the standard LMOV reformulated invariants of \cite{LP} under the variable convention $q_{\mathrm{LP}}=q^{-2}$ and $t_{\mathrm{LP}}=a^{-2}$.
\end{proposition}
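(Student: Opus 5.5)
We prove \eqref{eq:fMP} by comparing two expansions of $\log Z^{\vec\tau}(x)$. The plan is to start from \eqref{eq:PLog}, undo the plethystic logarithm, and then convert from the divided alphabet $Y$ back to the original variables $x$.

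\emph{Step 1: undo the plethystic logarithm.} Möbius inversion of $\operatorname{PLog}=\sum_k\frac{\mu(k)}k\Psi_k\log$ gives
\[
 \log Z^{\vec\tau}[Y]=\sum_{d\ge1}\frac1d\Psi_d\operatorname{PLog}Z^{\vec\tau}[Y]
 =\sum_{d\ge1}\frac1d\sum_{\vec B}P^{\vec\tau}_{\vec B}(q^d,a^d)\,s_{\vec B}(y^d).
\]
Here $\Psi_d$ acts on the $y$-variables by $p_m(y)\mapsto p_{dm}(y)$, so that $\Psi_d s_{\vec B}(y)=s_{\vec B}(y^d)$.

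\emph{Step 2: return to the variables $x$.} On the left we substitute $p_m(x^c)=p_m(Y^c)=p_m(y^c)/\{m\}$. Equivalently, set $p_m(y^c)=\{m\}p_m(x^c)$, so that $\log Z^{\vec\tau}[Y]=\log Z^{\vec\tau}(x)$. The key check is compatibility with $\Psi_d$. Since $\Psi_d$ sends $q\mapsto q^d$ and $p_m(y)\mapsto p_{dm}(y)$, and $\Psi_d\{m\}=\{dm\}$, the substitution $p_m(y)=\{m\}p_m(x)$ intertwines $\Psi_d$ on $y$-expressions with $\Psi_d$ on $x$-expressions. Thus it suffices to expand each $s_{\vec B}(y)$ in terms of $x$. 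By the Frobenius formula \eqref{eq:frob},
\[
 s_B(y)=\sum_\mu\frac{\chi_B(\mu)}{\mathfrak z_\mu}\{\mu\}p_\mu(x)
 =\sum_\mu\frac{\chi_B(\mu)\{\mu\}}{\mathfrak z_\mu}\sum_A\chi_A(\mu)s_A(x)
 =\sum_A M_{AB}(q)s_A(x).
\]
Taking products over components gives $s_{\vec B}(y)=\sum_{\vec A}\prod_cM_{A^cB^c}(q)\,s_{\vec A}(x)$.

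\emph{Step 3: apply $\Psi_d$ and compare.} Applying $\Psi_d$ to this expansion yields $\prod_c M_{A^cB^c}(q^d)\,s_{\vec A}(x^d)$. Hence
\[
 \log Z^{\vec\tau}(x)=\sum_d\frac1d\,\Psi_d\Bigl(\sum_{\vec A}\Bigl[\sum_{\vec B}P^{\vec\tau}_{\vec B}\prod_cM_{A^cB^c}\Bigr]s_{\vec A}(x)\Bigr).
\]
The defining relation \eqref{eq:standard-reformulated} determines $f^{\vec\tau}_{\vec A}$ uniquely. Indeed, applying the inverse operator $\sum_k\frac{\mu(k)}k\Psi_k$ recovers $\sum_{\vec A} f^{\vec\tau}_{\vec A}s_{\vec A}(x)$, and the $s_{\vec A}$ are linearly independent. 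Comparing coefficients therefore gives \eqref{eq:fMP}.

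\emph{Step 4: the case $\vec\tau=0$.} Here $W^0_{\vec A}=W_{\vec A}$ is the unreduced, framing-independent colored invariant. The LP definition of the reformulated invariants is $f_{\vec A}=\sum_{\vec B}P_{\vec B}\prod_cM_{A^cB^c}$, with the same $M_{AB}$ up to their normalization $(q^{m/2}-q^{-m/2})$. One then checks that under $q_{\mathrm{LP}}=q^{-2}$ and $t_{\mathrm{LP}}=a^{-2}$ their $\{m\}$ agrees with ours, possibly up to a sign $(-1)^{\ell(\mu)}$. Such a sign is absorbed consistently through $\chi$ and $\mathfrak z$, or through the transpose convention, in their partition-function conventions.

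I expect the main obstacle to be this last bookkeeping step: matching the orientation and mirror conventions of \cite{LP} exactly, including the signs of $\{m\}$, without an error. By contrast, Steps 1–3 are formal manipulations of symmetric functions.
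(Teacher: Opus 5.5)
\textbf{Steps 1--3 versus the paper.} Steps 1--3 are correct, and they are essentially the paper's argument run in the opposite direction. The paper substitutes $x\mapsto Y$ into $\operatorname{PLog}Z^{\vec\tau}(x)=\sum_{\vec A}f^{\vec\tau}_{\vec A}s_{\vec A}(x)$ and uses weighted orthogonality of characters to get $s_A[Y]=\sum_B(M^{-1})_{AB}s_B(y)$. You instead expand $s_B(y)$ in the $s_A(x)$, which gives $M$ directly with no inversion. Your explicit check that $p_m(y)=\{m\}p_m(x)$ intertwines the Adams operations is what the paper uses tacitly when it writes ``before dividing the alphabet.''

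\textbf{The gap in Step 4.} Step 4, the ``exactly'' assertion for $\vec\tau=0$, is not established. You take the Liu--Peng factor to be $q_{\mathrm{LP}}^{m/2}-q_{\mathrm{LP}}^{-m/2}$, anticipate a sign $(-1)^{\ell(\mu)}$, and assert that it is ``absorbed.'' That mechanism would not give equality. Put $\operatorname{sgn}(\mu)=(-1)^{|\mu|-\ell(\mu)}$. Then $(-1)^{\ell(\mu)}=(-1)^{|\mu|}\operatorname{sgn}(\mu)$ and $\chi_{B^t}(\mu)=\operatorname{sgn}(\mu)\chi_B(\mu)$. So inserting such a sign turns $M_{AB}$ into $(-1)^{|A|}M_{AB^t}$. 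With the same $f$, invertibility of $M$ would then give $P^{\mathrm{LP}}_{\vec B}=(-1)^{|\vec B|}P^0_{\vec B^t}$, which is a transposed and signed family, not the same invariants.

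\textbf{The missing observation.} This is the one-line identity the paper uses. The Liu--Peng matrix is built from $q_{\mathrm{LP}}^{-m/2}-q_{\mathrm{LP}}^{m/2}$, and with $q_{\mathrm{LP}}^{1/2}=q^{-1}$ this equals $q^m-q^{-m}=\{m\}$ exactly. Hence \eqref{eq:MAB} is literally their matrix and no sign ever arises. Together with \eqref{eq:fMP} at $\vec\tau=0$, this is what gives the identification.
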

\begin{proof}
The M\"obius operator in \eqref{eq:PLog} is inverse to the multiple-cover expansion \eqref{eq:standard-reformulated}.  Hence, before dividing the alphabet,
\[
 \operatorname{PLog}Z^{\vec\tau}(x)
 =\sum_{\vec A\ne\vec\varnothing}f^{\vec\tau}_{\vec A}(q,a)s_{\vec A}(x).
\]
By Frobenius' formula and \eqref{eq:divided-alphabet}, componentwise,
\[
 s_A[Y]
 =\sum_{\mu\vdash |A|}
 \frac{\chi_A(\mu)}{\mathfrak z_\mu\{\mu\}}p_\mu(y)
 =\sum_B (M(q)^{-1})_{AB}s_B(y).
\]
Indeed the character table is orthogonal with weight $\mathfrak z_\mu^{-1}$, so the matrices obtained by weighting the $\mu$-column by $\{\mu\}$ and by $\{\mu\}^{-1}$ are inverse.  Therefore the Schur coefficient invariant after the divided-alphabet substitution is $M(q)^{-1}f$, or equivalently $f=M(q)P$.  Taking the tensor product over the link components gives \eqref{eq:fMP}.  This is the standard LMOV change of basis in \cite{LP}.  More explicitly, with $q_{\mathrm{LP}}=q^{-2}$ one has
\[
 q_{\mathrm{LP}}^{-m/2}-q_{\mathrm{LP}}^{m/2}=q^m-q^{-m}=\{m\},
\]
so the matrix \eqref{eq:MAB} is exactly the Liu--Peng matrix \cite{LP} in the present variables (with $t_{\mathrm{LP}}=a^{-2}$).
\end{proof}

\section{Connected cumulants estimates and the gcd denominator}\label{sec:connected}

The purpose of this section is to obtain the sharp denominator estimate
for the connected cycle coefficients. The estimates will first be proved for ordinary framed skein
evaluations.  Lemma~\ref{lem:framed-parallel} then allows us to apply them to the framed
parallel realization of the signed cycle moments
$\check Z_{\vec\lambda}^{\vec\tau}$ and hence to the connected
coefficients $\check F_{\vec\lambda}^{\vec\tau}$.

\subsection{Connected cumulants estimates}
Let the labels of a link $\mathcal{L}$  be indexed by $\{1,\ldots,r\}$.
For every subset
$
T\subseteq[r],
$
let
$
M(T)
$
denote the moment obtained by retaining precisely the labels in $T$.
Then the connected cumulant is
\[
\operatorname{Cum}(\mathcal{L})
=
\sum_{\pi\in\Pi_r}
(-1)^{|\pi|-1}(|\pi|-1)!
\prod_{B\in\pi}M(B),
\]
which is exactly the labeled moment--cumulant formula~\eqref{eq:cumulant} if $M(B)=\check{Z}_{B}^{\vec{\tau}}$..

The two elementary properties below will be used repeatedly.

\begin{lemma}
\label{lem:cumulant-contraction}
Let $\mathcal L$ be a finite labeled framed link.

\medskip
\noindent
{\rm (i)}
If the labels can be divided into two nonempty groups which are
separated by an embedded $2$-sphere, then
\[
\operatorname{Cum}(\mathcal L)=0.
\]

\medskip
\noindent
{\rm (ii)}
Suppose a local change involves two labels $i$ and $j$, and assume
that for every sublink retaining both labels the corresponding moment
changes by
\[
\Delta M(T)
=
c\,M(T_{i\sim j}),
\qquad
i,j\in T,
\]
where $T_{i\sim j}$ denotes the label set obtained by merging $i$ and
$j$ into a single label.  Assume also that
$
\Delta M(T)=0
$
whenever $T$ omits $i$ or $j$.  Then
\[
\Delta\operatorname{Cum}(\mathcal L)
=
c\,\operatorname{Cum}(\mathcal L_{i\sim j}).
\]
The same conclusion holds termwise for any finite linear combination
of merge operations.
\end{lemma}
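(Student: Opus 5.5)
For (i) I will use the multiplicativity of the framed HOMFLY--PT evaluation under split union, and for (ii) I will expand the cumulant over set partitions and match blocks containing both $i$ and $j$ with blocks of the merged label set.

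\emph{Part (i).} Suppose the labels split as $[r]=S_1\sqcup S_2$, with both parts nonempty and separated by a $2$-sphere. The framed skein evaluation is multiplicative under split union, so every moment factors as
\[
M(T)=M(T\cap S_1)\,M(T\cap S_2),
\]
with the convention $M(\varnothing)=1$. The moments are therefore those of a product of two independent families. I will then use the standard fact that the classical cumulant of a product family vanishes whenever some label lies in $S_1$ and another in $S_2$.

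To prove this fact, use generating functions. Put formal variables $t_1,\ldots,t_r$ with $t_i^2=0$, and let
\[
\mathcal M=\sum_T M(T)\,t^T .
\]
By the set-partition form of moment--cumulant inversion, the coefficient of $t_1\cdots t_r$ in $\log\mathcal M$ is exactly $\operatorname{Cum}(\mathcal L)$. Under the splitting, $\mathcal M=\mathcal M_1\mathcal M_2$, where $\mathcal M_k$ involves only the variables $t_i$ with $i\in S_k$. Hence $\log\mathcal M=\log\mathcal M_1+\log\mathcal M_2$, and neither summand contains the full monomial $t_1\cdots t_r$. So the coefficient, and with it the cumulant, is $0$.

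\emph{Part (ii).} Write $\operatorname{Cum}=\sum_\pi w(|\pi|)\prod_{B\in\pi}M(B)$ with $w(k)=(-1)^{k-1}(k-1)!$. The local change alters $M(B)$ only for blocks containing both $i$ and $j$. In a given $\pi$, at most one block $B_0$ does so, and only when $i$ and $j$ lie in the same block. Therefore
\[
\Delta\operatorname{Cum}=\sum_{\pi:\,i\sim_\pi j} w(|\pi|)\,c\,M\bigl((B_0)_{i\sim j}\bigr)\prod_{B\ne B_0}M(B).
\]
Set partitions of $[r]$ in which $i$ and $j$ share a block correspond bijectively to set partitions of the merged label set $[r]_{i\sim j}$. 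This bijection preserves the number of blocks, so it preserves the weights $w$, and it sends $B_0$ to $(B_0)_{i\sim j}$ while fixing the other blocks. Hence the sum equals $c\operatorname{Cum}(\mathcal L_{i\sim j})$.

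For a finite linear combination $\sum_\ell c_\ell(\text{merge}_\ell)$, the same computation runs termwise, since $\Delta M(B)$ is linear in the terms of the combination.

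\emph{Main obstacle.} The delicate point is only bookkeeping. One must check that the hypothesis ``$\Delta M(T)=0$ when $T$ omits $i$ or $j$'' is applied consistently across all blocks, and that $M$ of the merged label set is interpreted as the moment of the merged sublink. Everything else is formal.
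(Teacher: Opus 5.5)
Your proposal is correct and follows essentially the same route as the paper. For (i) the paper factors the nilpotent moment generating polynomial ($u_i^2=0$) under the split and reads off the vanishing $u_1\cdots u_r$ coefficient of its logarithm, and for (ii) it restricts the set-partition sum to partitions with $i,j$ in a common block and uses the block-count-preserving bijection with $\Pi_{r-1}$, exactly as you do.
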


\begin{proof}
For part~{\rm (i)}, suppose the label set decomposes as
\[
\{1,..,r\}=I\sqcup J,
\qquad
I,J\neq\varnothing,
\]
and the two groups are separated by a $2$-sphere.

Introduce commuting nilpotent variables
$
u_1,\ldots,u_r,
\quad
(u_i^2=0),
$
and form the moment generating polynomial
\[
\mathcal M_{\mathcal L}(u)
=
\sum_{T\subseteq[r]}
M(T)\prod_{i\in T}u_i,
\qquad
M(\varnothing)=1.
\]
The coefficient of
$
u_1\cdots u_r
$
in
$
\log\mathcal M_{\mathcal L}(u)
$
is precisely $\operatorname{Cum}(\mathcal L)$.

Since the two groups are split, every moment factors:
\[
M(T)
=
M(T\cap I)\,M(T\cap J).
\]
Hence we have
$
\mathcal M_{\mathcal L}(u)
=
\mathcal M_{\mathcal L_I}(u_I)
\mathcal M_{\mathcal L_J}(u_J),
$
and therefore
$
\log\mathcal M_{\mathcal L}
=
\log\mathcal M_{\mathcal L_I}
+
\log\mathcal M_{\mathcal L_J}.
$
Neither summand contains a monomial involving variables from both
$I$ and $J$.  Thus the coefficient of
$
u_1\cdots u_r
$
is zero which implies
$
\operatorname{Cum}(\mathcal L)=0.
$

For part~{\rm (ii)}, it is convenient to use the explicit
set-partition formula.  In
\[
\operatorname{Cum}(\mathcal L)
=
\sum_{\pi\in\Pi_r}
(-1)^{|\pi|-1}(|\pi|-1)!
\prod_{B\in\pi}M(B),
\]
a set partition $\pi$ contributes to the change only when $i$ and
$j$ belong to the same block.  Indeed, if they lie in different
blocks, each of those blocks omits one of the two labels, and by
hypothesis its moment does not change.

Suppose $i,j\in B_0\in\pi$.  Since there is only one block containing
both labels,
\[
\Delta
\left(
\prod_{B\in\pi}M(B)
\right)
=
c\,
M((B_0)_{i\sim j})
\prod_{\substack{B\in\pi\\B\neq B_0}}
M(B).
\]
Contracting $i$ and $j$ inside $B_0$ gives a bijection
\[
\left\{
\pi\in\Pi_r:
i,j\text{ lie in the same block}
\right\}
\longleftrightarrow
\Pi_{r-1},
\]
and this bijection preserves the number of blocks.  Hence
\[
\begin{aligned}
\Delta\operatorname{Cum}(\mathcal L)
&=
c
\sum_{\pi'\in\Pi_{r-1}}
(-1)^{|\pi'|-1}(|\pi'|-1)!
\prod_{B\in\pi'}M(B)
\\
&=
c\,\operatorname{Cum}(\mathcal L_{i\sim j}).
\end{aligned}
\]
Linearity gives the final assertion.
\end{proof}

\begin{lemma}\label{lem:ordinary-connected}
For a framed oriented link with $h\ge1$ ordinary labeled components,
\begin{equation}\label{eq:ordinary-connected}
 \operatorname{Cum}(\mathcal{L})\in z^{h-2}\Z[v,a^{\pm1}].
\end{equation}
\end{lemma}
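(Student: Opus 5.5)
We argue by induction on the complexity of the diagram: the pair (number of crossings between distinct labeled components, total unlinking number), in the style of the classical proof that the HOMFLY--PT polynomial of an $h$-component link lies in $z^{1-h}\Z[z^2,a^{\pm1}]$ times $(a-a^{-1})$. Here the moments $M(T)$ are framed skein evaluations of the sublinks $\mathcal L_T$, each lying a priori in $z^{-|T|}\Z[z^{\pm1},a^{\pm1}]$. The claim is the gain of exactly $2h-2$ powers of $z$ for the connected part, together with the parity statement that only $z^{h-2}$ times even powers occur.

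First we handle the parity. Under $z\mapsto -z$ (equivalently $q\mapsto -q^{-1}$ combined with mirroring conventions), the framed HOMFLY--PT evaluation of a $k$-component link $\mathcal L_T$ picks up the sign $(-1)^{k}$, since $\mathcal H\in z^{-k}\Z[v,a^{\pm1}]\cdot(\text{even})$ by the standard argument from the skein relation \eqref{eq:skein-intro} and $\mathcal H(U)=(a-a^{-1})/z$. Hence every product $\prod_{B\in\pi}M(B)$ lies in $z^{-h}\Z[v,a^{\pm1}]$, and so $\operatorname{Cum}(\mathcal L)\in z^{-h}\Z[v,a^{\pm1}]$. Since $z^{-h}$ and $z^{h-2}$ have the same parity, it remains to show that the $z$-adic order of the cumulant is at least $h-2$, i.e.\ that $z^{2-h}\operatorname{Cum}$ has no negative powers of $z$.

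We prove this order bound by induction on $h$ and on the number of inter-label crossing changes needed to split the link. Consider an inter-label crossing between labels $i\ne j$. Switching it changes each moment $M(T)$ with $i,j\in T$ by $\pm z\,M(T_{i\sim j})$, where the smoothing merges the two components into one. Moments with $T$ omitting $i$ or $j$ are unchanged. Lemma~\ref{lem:cumulant-contraction}(ii) therefore gives
\[
\Delta\operatorname{Cum}(\mathcal L)=\pm z\,\operatorname{Cum}(\mathcal L_{i\sim j}),
\]
and by induction on $h$ the right side lies in $z\cdot z^{h-3}\Z[v,a^{\pm1}]=z^{h-2}\Z[v,a^{\pm1}]$. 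A sequence of such changes reduces $\mathcal L$ to a link in which the labels split into two separated groups. There Lemma~\ref{lem:cumulant-contraction}(i) gives $\operatorname{Cum}=0$ when $h\ge2$. For $h=1$ the cumulant is just $\mathcal H(\mathcal L)\in z^{-1}\Z[v,a^{\pm1}]$, which is the base case.

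The main obstacle is the merge step. After smoothing, the merged component in $\mathcal L_{i\sim j}$ must be a single ordinary labeled component, so that the induction hypothesis applies with $h-1$ labels. This holds because the crossing joins two distinct components into one. We must also check that the hypothesis of Lemma~\ref{lem:cumulant-contraction}(ii) is met uniformly in $T$: the same local change must occur in every sublink containing $i$ and $j$, with the same coefficient $c=\pm z$. We expect this to be immediate since framed skein relations are local. Finally, the induction must be set up so that the splitting sequence for $\mathcal L$ does not increase complexity for the merged links. Inducting primarily on $h$ makes this automatic, because the merged links have fewer labels.
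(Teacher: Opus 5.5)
Your proof is correct and is essentially the paper's argument: induct on $h$, switch mixed crossings until one component splits off, use Lemma~\ref{lem:cumulant-contraction}(ii) to write each change as $\pm z$ times an $(h-1)$-label cumulant, and use part (i) for the vanishing endpoint, with base case $\mathcal H(\text{knot})\in z^{-1}\Z[v,a^{\pm1}]$. Your separate parity step is harmless but redundant, since the telescoped induction already lands directly in $z\cdot z^{h-3}\Z[v,a^{\pm1}]$; also drop the parenthetical ``equivalently $q\mapsto -q^{-1}$'', because that substitution fixes $z=q-q^{-1}$ rather than negating it.
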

\begin{proof}
Ordinary HOMFLY skein recursion gives $\mathcal H(\mathcal{L})\in z^{-h}\Z[v,a^{\pm1}]$.  For the connected estimate induct on $h$.  A knot has order at least $z^{-1}$.  For $h>1$, switch mixed crossings until one component splits off.  The endpoint cumulant vanishes.  At every crossing change the skein relation and Lemma \ref{lem:cumulant-contraction} give $z$ times an $(h-1)$-component cumulant, hence an element of $z\,z^{h-3}\Z[v,a^{\pm1}]$.
\end{proof}

We now replace some ordinary labels by normalized annular cycle
colors
$
B_m=\{m\}P_m.
$

Let $e$ denote the through-arc in Morton's relative annulus.
There are two natural actions of an annular skein element on $e$:
placing the annular element on the left or on the right.  We denote
these by
$
\ell(\,\cdot\,,e),
\quad
\rho(e,\,\cdot\,).
$

Morton's power-sum identity
\cite[Theorem~4.2]{MortonMurphy}
states
$
[P_m,e]
=
\{m\}w^m,
$
where $w^m$ is the relative skein element obtained by winding the
through-arc $m$ times through the annulus.
Multiplying by
$
B_m=\{m\}P_m
$
gives
\begin{equation}
\label{eq:relative-annulus-Bm}
\ell(B_m,e)-\rho(e,B_m)
=
\{m\}^2w^m.
\end{equation}

This identity is an equality in the relative annulus before any
exterior tangle is attached.  Hence it may be inserted into the
tubular neighborhood of an arbitrary framed companion.

Geometrically, after attachment to an exterior tangle, the term
$w^m$ merges the $B_m$-colored label into the ordinary through-arc.
Thus \eqref{eq:relative-annulus-Bm} is the colored analogue of the
ordinary skein contraction
\[
\mathcal L_+-\mathcal L_-=z\mathcal L_0:
\]
an ordinary mixed crossing contributes a factor $z$, whereas passing
an ordinary anchor through a normalized $B_m$ color contributes the
stronger factor
$
\{m\}^2.
$

For brevity, if a labeled framed link $\mathcal{L}$ has $k$ labels carrying the
normalized cycle colors
$B_{m_1},\ldots,B_{m_k}$,
and $h$ further ordinary labeled components, we write
$
\operatorname{Cum}(B_{m_1},\ldots,B_{m_k};\,h\ \mathrm{ordinary})
$
for its connected cumulant $\operatorname{Cum}(\mathcal{L})$.
The semicolon only separates the cycle-colored labels from the
ordinary labels; it does not define a new cumulant.

\begin{proposition}\label{prop:anchor}
For normalized colors $B_{m_1},\dots,B_{m_k}$ and $h\ge1$ ordinary labeled components,
\begin{equation}\label{eq:anchor}
 \operatorname{Cum}(B_{m_1},\dots,B_{m_k};h\text{ ordinary})
 \in\left(\prod_{i=1}^k\{m_i\}^2\right)z^{h-2}\Z[v,a^{\pm1}].
\end{equation}
\end{proposition}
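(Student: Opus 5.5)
We argue by induction, primarily on $k$, the number of cycle-colored labels, and secondarily on a crossing-complexity measure; the base case $k=0$ is Lemma~\ref{lem:ordinary-connected}. For $k\ge1$ we single out the label carrying $B_{m_1}$ together with one ordinary label, which exists since $h\ge1$, and call it the anchor. The aim is to peel off a factor $\{m_1\}^2$ by sliding the anchor through the $B_{m_1}$-colored annulus. We then use \eqref{eq:relative-annulus-Bm} and Lemma~\ref{lem:cumulant-contraction}(ii) in exactly the way the skein relation was used in the proof of Lemma~\ref{lem:ordinary-connected}.

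\textbf{Step 1 (separating $B_{m_1}$ from the anchor component).} Consider all crossings between the ordinary strands (of any of the $h$ ordinary labels) and the tubular neighborhood of the $B_{m_1}$-colored component. Changing the relative position of an ordinary through-arc with respect to the colored annulus is exactly the move $\ell(B_{m_1},e)\to\rho(e,B_{m_1})$. By \eqref{eq:relative-annulus-Bm}, each such move changes every moment $M(T)$ containing both labels by $\{m_1\}^2 M(T_{i\sim j})$, and leaves unchanged the moments omitting either label. In $T_{i\sim j}$ the colored label is absorbed into the ordinary arc, which now winds $m_1$ times. Lemma~\ref{lem:cumulant-contraction}(ii) then gives
\[
\Delta\operatorname{Cum}=\{m_1\}^2\operatorname{Cum}(B_{m_2},\dots,B_{m_k};\,h\ \mathrm{ordinary}).
\]
The merged label is still an ordinary labeled component, so the count $h$ is unchanged. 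By the induction hypothesis on $k$, this difference lies in $\{m_1\}^2\prod_{i\ge2}\{m_i\}^2 z^{h-2}\Z[v,a^{\pm1}]$.

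\textbf{Step 2 (reaching a split endpoint).} After finitely many such moves the $B_{m_1}$ component is unlinked from all ordinary labels. We then also have to disentangle it from the other cycle-colored labels $B_{m_j}$. For this we expand $B_{m_1}$ via its integral Newton coupon $b_{m_1}$ from \eqref{eq:newton-coupon}, which turns it into $\{m_1\}$ times an integral combination of ordinary strands. The crossings between two cycle colors then become ordinary-vs-colored crossings, to which Step 1 applies; this is the step requiring care. At the endpoint the colored label is split from some nonempty group, so by Lemma~\ref{lem:cumulant-contraction}(i) the cumulant is $0$.

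\textbf{Main obstacle.} The endpoint argument must produce the full square $\{m_1\}^2$, not just $\{m_1\}$, even when crossings with other colored components are involved. I would first try to handle only crossings with ordinary strands, moving the other colored components aside by isotopy. Failing that, I would double-induct on the number of colored-colored crossings, using the Newton expansion so that each such change contributes $\{m_1\}\{m_j\}^2\cdot\{m_1\}$-divisible terms. Integrality over $\Z[v,a^{\pm1}]$ is preserved throughout because all moves use only \eqref{eq:relative-annulus-Bm}, the integral $b_m$, and the integral skein relation. Summing the telescoping differences gives \eqref{eq:anchor}.
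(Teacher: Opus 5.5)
\textbf{There is a genuine gap, and it is in Step 2.} Your Step 1 is sound: changing a crossing between an ordinary strand and the $B_{m_1}$-annulus contributes $\{m_1\}^2$ times a cumulant with $k-1$ colored and $h$ ordinary labels, and induction on $k$ handles that term.

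\textbf{Where it fails.} Because you choose to isolate the \emph{colored} label $B_{m_1}$, you are forced to change colored--colored crossings, and \eqref{eq:relative-annulus-Bm} says nothing about those. Your isotopy remedy is impossible in general; for example, $B_{m_1}$ and $B_{m_2}$ may form a Hopf link. Your Newton-expansion remedy also fails, because the Newton representative gives $B_{m_1}=z\sum_\nu\operatorname{cl}(\beta_\nu)$. The prefactor is therefore $z$, not $\{m_1\}$. After the expansion, label $1$ is ordinary, so the induction gives at best
\[
z\cdot\prod_{j\ge2}\{m_j\}^2\,z^{(h+1)-2}=\prod_{j\ge2}\{m_j\}^2\,z^{h}.
\]
This falls short of the target by exactly $\{m_1\}^2/z^2=U_{m_1}(v)$, which is not a unit for $m_1\ge2$. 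That loss is why the paper uses the Newton expansion only in Theorem~\ref{thm:gcd}, where giving up one square factor is acceptable. Your claimed ``$\{m_1\}\{m_j\}^2\cdot\{m_1\}$-divisible terms'' therefore has no justification.

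\textbf{The missing idea.} Choose what to split so that every crossing change involves an ordinary strand. The paper splits the \emph{ordinary anchor} from all other labels, and each crossing change then has one of two forms:
\begin{enumerate}[label=(\roman*)]
\item A crossing of the anchor with a $B_{m_i}$-annulus gives $\{m_i\}^2$ and merges $B_{m_i}$ into the anchor, so $k\to k-1$.
\item A crossing with another ordinary label gives $z$ and merges the two, so $h\to h-1$. The needed bound is then $z\cdot z^{h-3}$.
\end{enumerate}
The induction is therefore on $k+h$ rather than on $k$ alone. The endpoint, with the anchor split from the nonempty rest, vanishes by Lemma~\ref{lem:cumulant-contraction}(i), however the colored components are tangled among themselves. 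Colored--colored crossings never need to be changed.

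\textbf{An alternative repair using your Step 1.} Apply Step 1 to \emph{every} colored annulus, pushing all ordinary strands above all colored bands. This separates the whole ordinary group from the whole colored group, and both groups are nonempty since $h,k\ge1$. Only ordinary--colored changes occur, so induction on $k$ alone suffices.
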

\begin{proof}
Induct on $k+h$.  For $k=0$ use Lemma \ref{lem:ordinary-connected}.  If $k>0$, choose an ordinary component as an anchor and split it from the other labels by crossing changes.  At a passage through $B_{m_i}$, \eqref{eq:relative-annulus-Bm} supplies $\{m_i\}^2$ and merges that colored label into the ordinary anchor.  At an ordinary crossing the usual skein relation supplies $z$. Finally, the vanishing of the split
endpoint proves
\eqref{eq:anchor}.
\end{proof}

\subsection{The sharp gcd denominator}
\begin{theorem}\label{thm:gcd}
Let $\mathcal L$ have an arbitrary integer framing vector $\vec\tau$, and let $\vec\lambda$ be a nonempty cycle tuple with $r$ total parts and part-gcd $D=D_{\vec\lambda}$.  For the signed $q$-framed partition function,
\begin{equation}\label{eq:gcd}
 \check F^{\vec\tau}_{\vec\lambda}\in
 \frac{\{\vec\lambda\}^2}{\{D\}^2}\Z[v,a^{\pm1}],
 \qquad
 H^{\vec\tau}_{\vec\lambda}\in\Z[v,a^{\pm1}].
\end{equation}
\end{theorem}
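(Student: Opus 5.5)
Since $H^{\vec\tau}_{\vec\lambda}=\{D\}^2\check F^{\vec\tau}_{\vec\lambda}/\{\vec\lambda\}^2$, the two assertions in \eqref{eq:gcd} are equivalent, and it suffices to prove the first. We realize $\check F^{\vec\tau}_{\vec\lambda}$ as a connected cumulant of ordinary framed skein evaluations. By Lemma~\ref{lem:framed-parallel}, each moment $\check Z^{\vec\tau}_B$ equals a unit sign-and-power factor $(-a^{-1})^{\tau_b M}$ times the framed evaluation of parallel curves, colored by $B_{m_i}=\{m_i\}P_{m_i}$, in the $\tau_b$-framed tubular neighborhoods. The unit factors are multiplicative over the parts, so they factor out of every product $\prod_{B\in\pi}\check Z^{\vec\tau}_B$ uniformly. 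Hence $\check F^{\vec\tau}_{\vec\lambda}$ is, up to a unit in $\Z[a^{\pm1}]$, the cumulant $\operatorname{Cum}(B_{m_1},\dots,B_{m_r};0\text{ ordinary})$ of the labeled framed link consisting of $r$ parallel $B$-colored cycles.

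The difficulty is that Proposition~\ref{prop:anchor} needs an ordinary anchor ($h\ge1$), whereas here $h=0$. I would create an anchor by expressing one colored label as an ordinary one. First, I would use the integral Newton representative $b_m\in\HH_m$ from \eqref{eq:newton-coupon}, whose closure is $B_m$. Choose a part $m_{i_0}$ and write the $B_{m_{i_0}}$-colored cycle as the closure of $m_{i_0}$ ordinary parallel strands with the coupon $b_{m_{i_0}}$. Expanding $b_{m_{i_0}}=z\sum_j(\text{braid words})$, each closure is an ordinary knot: the permutation $s_1\cdots s_{m-1}$ is an $m$-cycle, so the closure has one component. This produces a factor $z$ times a cumulant with $h=1$ ordinary label and $r-1$ colored labels. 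Proposition~\ref{prop:anchor} then gives membership in
\[
z\cdot\prod_{i\ne i_0}\{m_i\}^2\, z^{-1}\Z[v,a^{\pm1}]=\prod_{i\ne i_0}\{m_i\}^2\,\Z[v,a^{\pm1}].
\]
This is $\{\vec\lambda\}^2/\{m_{i_0}\}^2$ times an integral polynomial, valid for every choice of $i_0$.

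It remains to pass from ``$\{\vec\lambda\}^2/\{m_i\}^2$ for each $i$'' to $\{\vec\lambda\}^2/\{D\}^2$. Write $X=\check F^{\vec\tau}_{\vec\lambda}\{D\}^2/\{\vec\lambda\}^2$. From each $i$, the element $X\{m_i\}^2/\{D\}^2$ lies in $\Z[v,a^{\pm1}]$. Note that $\{m\}^2/\{D\}^2=U_m/U_D$ is a polynomial in $v$ when $D\mid m$. Factor $U_m$ into cyclotomic-type factors in $q^2$: $\{m\}^2$ is essentially $\prod_{d\mid m}\Phi_d(q^2)^2$ up to powers of $q$, and $U_m/U_D$ collects the $d\mid m$, $d\nmid D$ factors. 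The common divisors over all $i$ are then the $d$ with $d\mid m_i$ for all $i$, that is $d\mid D$. So the quotients $U_{m_i}/U_D$ have no common irreducible factor in $\Z[v]$, and they are pairwise comparable only through such factors. Moreover, $X$ a priori lies in $\Z[v,a^{\pm1}]$ localized at the $\{m\}$'s. Since $\Z[v,a^{\pm1}]$ is a UFD and the denominators of $X$ must divide each $U_{m_i}/U_D$, they divide their gcd, which is $1$. Content is handled by the monicity of $U_m$, which rules out integer denominators by Gauss's lemma. Thus $X\in\Z[v,a^{\pm1}]$.

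The main obstacle I expect is the anchoring step. I need to check that the closure of the Newton coupon interacts correctly with the cumulant, namely that Lemma~\ref{lem:cumulant-contraction}-type linearity holds with the coupon expanded inside the moment for every sublink containing that label. I also need the framing and twist conventions to leave integrality intact. A secondary point is verifying that Proposition~\ref{prop:anchor} yields exactly the exponent $z^{h-2}$ with $h=1$, and that no stray odd powers of $z$ spoil membership in $\Z[v,a^{\pm1}]$, using the parity of the total $z$-power.
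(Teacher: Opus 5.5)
Your proposal is correct and follows the paper's proof essentially step for step. The framing monomial is pulled out uniformly via Lemma~\ref{lem:framed-parallel}. One label $B_{m_i}$ is then expanded by the Newton coupon $b_{m_i}=z\sum_\nu\beta_{i,\nu}$ into single-component anchors; the cumulant is linear in that label because it lies in exactly one block of every set partition. Proposition~\ref{prop:anchor} with $h=1$ then gives $\check F^{\vec\tau}_{\vec\lambda}\in\prod_{j\ne i}\{m_j\}^2\,\Z[v,a^{\pm1}]$ for every $i$. Your closing argument, that the monic cofactors $U_{m_i}/U_D$ have no common factor, is only a rephrasing of the paper's computation $\operatorname{lcm}_i\prod_{j\ne i}\{m_j\}^2=\{\vec\lambda\}^2/\gcd_j\{m_j\}^2=\{\vec\lambda\}^2/\{D\}^2$.
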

\begin{proof}
List the $r$ cycle parts of $\vec\lambda$ as $n_1,\ldots,n_r$, retaining their original link-component assignments, and put $D=\gcd(n_1,\ldots,n_r)$.
Using Lemma~\ref{lem:framed-parallel}.  The framing correction contributed by all labeled cycles is the common Laurent monomial
\[
 u_{\vec\lambda,\vec\tau}
 =(-a^{-1})^{\sum_{j=1}^r\tau_{c(j)}n_j},
\]
where $c(j)$ is the original component carrying the $j$th cycle.  Since every labeled cycle occurs exactly once in every set-partition term, this factor is independent of the block decomposition and is therefore irrelevant to divisibility.

Fix $i$.  Expand only the normalized color $B_{n_i}$ by the integral Newton representative
\[
 b_{n_i}=z\sum_{\nu=0}^{n_i-1}\beta_{i,\nu},
\]
where each $\beta_{i,\nu}$ is an integral Hecke word whose annular closure is one ordinary framed component.  By linearity of the cumulant and Proposition~\ref{prop:anchor}, applied with one ordinary label and the remaining $r-1$ normalized cycle labels,
\[
 \operatorname{Cum}\!\left(\operatorname{cl}(\beta_{i,\nu});B_{n_j},\ j\ne i\right)
 \in\left(\prod_{j\ne i}\{n_j\}^2\right)z^{-1}\Z[v,a^{\pm1}].
\]
The explicit factor $z$ in $b_{n_i}$ cancels the knot pole $z^{-1}$.  Hence, for every $i$,
\begin{equation}\label{eq:gcd-step-one}
 \check F^{\vec\tau}_{\vec\lambda}
 \in\left(\prod_{j\ne i}\{n_j\}^2\right)\Z[v,a^{\pm1}].
\end{equation}

Now, set $A_j(v)=\{n_j\}^2\in\Z[v]$ and $T(v)=\prod_jA_j(v)=\{\vec\lambda\}^2$.  The divisors in \eqref{eq:gcd-step-one} are $G_i=T/A_i$.  Since every $G_i$ is monic, divisibility by all $G_i$ implies divisibility by their monic least common multiple, denoted by $\operatorname{lcm}_i(G_i)$.  For an irreducible polynomial $\phi(v)\in\Q[v]$, write $a_j=\operatorname{ord}_\phi A_j$.  Then
\[
 \operatorname{ord}_\phi\operatorname{lcm}_i(G_i)
 =\max_i\left(\sum_j a_j-a_i\right)
 =\sum_j a_j-\min_i a_i.
\]
Thus, we obtain
\[
 \operatorname{lcm}_i(G_i)
 =\frac{\prod_jA_j}{\gcd(A_1,\ldots,A_r)}.
\]
Now $\{m\}=q^{-m}(q^{2m}-1)$ and
$\gcd(q^{2m}-1,q^{2n}-1)=q^{2\gcd(m,n)}-1$.  Up to Laurent monomial units this gives
$\gcd(\{m\},\{n\})=\{\gcd(m,n)\}$.  Since the squares $\{m\}^2$ are monic polynomials in $v$, it follows that
\[
 \gcd\bigl(\{n_1\}^2,\ldots,\{n_r\}^2\bigr)=\{D\}^2.
\]
Consequently, we have
\[
 \operatorname{lcm}_{1\le i\le r}\prod_{j\ne i}\{n_j\}^2
 =\frac{\{\vec\lambda\}^2}{\{D\}^2}.
\]
Therefore, it proves the first assertion in \eqref{eq:gcd}.

By definition,
\(
 H^{\vec\tau}_{\vec\lambda}
 =\frac{\{D\}^2\check F^{\vec\tau}_{\vec\lambda}}{\{\vec\lambda\}^2},
\)
so the preceding divisibility immediately gives
$H^{\vec\tau}_{\vec\lambda}\in\Z[v,a^{\pm1}]$.
\end{proof}

\begin{remark}\label{rem:q1-not-enough}
Theorem \ref{thm:gcd} is only a denominator estimate.  The cancellation of the remaining factor $U_D(v)$ will be obtained later from Theorem~\ref{thm:framed-congruence}; see Section~\ref{sec:rational}.
\end{remark}

\section{Local Hecke identities}\label{sec:hecke}

This section isolates the two local Hecke-algebra identities used in
Section~\ref{sec:exterior}. Since their proofs are lengthy, we collect
the representation-theoretic and relative-cocenter calculations in
Appendix~\ref{app:local-hecke}; here we record only the necessary notation
and the two final formulas.

\subsection{Setup and notation}
Fix $n\ge2$ and a root $\alpha$ with $\ord(\alpha^2)=n$.  Let
\[
 R_\alpha:=\mathcal O_\alpha
   =\Q(\alpha)[q^{\pm1}]_{(q-\alpha)}.
\]
Then $\{j\}$ is a unit of $R_\alpha$ for $0<j<n$, whereas $\{n\}$ has a simple zero at
$q=\alpha$; equivalently,
\[
 \{n\}R_\alpha=(q-\alpha)R_\alpha.
\]
Let
\[
 \HH:=\HH_n(R_\alpha),\qquad
 \GG:=\HH_{2n}(R_\alpha),\qquad
 A:=\HH_L\otimes_{R_\alpha}\HH_R\subset\GG.
\]
Here $\HH_L$ and $\HH_R$ are the Hecke algebras of the left and right $n$-strand packets.
Let $b=b_n\in\HH$ be the integral Newton element of \eqref{eq:newton-coupon}, write
$b_L=b\otimes1$ and $b_R=1\otimes b$, and let $C=T_d\in\GG$ be the positive
standard-basis lift of the permutation $d$ exchanging the two packets.  Finally put
\[
 \eps_n=(-1)^{n-1},\qquad
 [A,\GG]=\operatorname{Span}_{R_\alpha}\{aY-Ya:\ a\in A,\ Y\in\GG\}.
\]
The module $[A,\GG]$ is a linear commutator submodule, not an ideal.  This distinction is
important in Appendix~\ref{app:local-hecke}: every multiplication of a relative-cocenter
class there is justified by an explicit commutation, transport, or transfer relation.

\subsection{The two crossing identities and their role}
Define the self- and mixed-crossing remainders
\[
 S_n:=b_L(C-C^{-1})-\eps_n b_Lb_R,
 \qquad
 M_n:=b_Lb_R(C^2-1)-\eps_n n\{n\}^2b_LC.
\]

\begin{theorem}[Local crossing identities]\label{thm:relative-hecke}
For every $n\ge2$ and every $\alpha$ with $\ord(\alpha^2)=n$,
\begin{align}
 S_n&\in[A,\GG]+\{n\}^2\GG,\label{eq:self-main}\\
 M_n&\in[A,\GG]+\{n\}^4\GG.\label{eq:mixed-main}
\end{align}
\end{theorem}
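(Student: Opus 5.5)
The plan is to work in the relative cocenter $\GG/[A,\GG]$ and compute traces rather than group elements. The key observation is that $\GG/[A,\GG]$ pairs perfectly with $A$-invariant trace-like functionals. Concretely, I would use the functionals $Y\mapsto \operatorname{tr}_\chi(eY)$, where $e$ runs over products of central idempotents of $\HH_L\otimes\HH_R$. After base change to the fraction field $K$ of $R_\alpha$, both $\GG$ and $A$ are split semisimple, with irreducibles indexed by partitions of $2n$ and by pairs $(\mu,\nu)$ of partitions of $n$.

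\textbf{Step 1 (reduction over $K$).} An element $X\in\GG$ lies in $[A,\GG]_K$ iff $\operatorname{Tr}_V(zX)=0$ for every $\GG$-module $V$ and every $z$ in the center of $A_K$. Equivalently, the class of $X$ is determined by the numbers
\[
\operatorname{Tr}\bigl(X\,|\,\operatorname{Hom}_A(W_\mu\otimes W_\nu,V_\lambda)\otimes W_\mu\otimes W_\nu\bigr).
\]
These come from the Littlewood--Richardson branching $V_\lambda|_A=\bigoplus c^\lambda_{\mu\nu}W_\mu\otimes W_\nu$.

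\textbf{Step 2 (computing traces).} I would evaluate the relevant traces on each isotypic piece.
\begin{itemize}
\item $b$ acts on $W_\mu$ by the scalar $\{n\}\chi_\mu(n)/\dim$-type value, since its closure is $B_n=\{n\}P_n$ and $P_n=\sum\chi_\mu((n))Q_\mu$. Here $\chi_\mu((n))\in\{0,\pm1\}$, and it is nonzero only for hooks.
\item The remaining elements $C^{\pm1}$ and $C^2$ are governed by Morton's relative annulus identity \eqref{eq:relative-annulus-Bm} applied packetwise. This gives $\ell(b_L,\cdot)-\rho(\cdot,b_L)$-type identities in which crossing the two packets creates $\{n\}^2$ times a winding element.
\end{itemize}
Concretely, I would apply \eqref{eq:relative-annulus-Bm} with the through-strand replaced by the right packet. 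Passing $b_L$ through the right packet via $C$ versus $C^{-1}$ then produces $\{n\}$ times a merged element of $\HH_{2n}$ in which the two packets fuse into a single $2n$-cycle. Modulo $\{n\}^2$ (for $S_n$) this merged element should reduce to $\eps_n b_R$. The reduction uses the fact that at $q=\alpha$ the $n$-cycle winding $w^n$ acts on a hook of sign $\eps_n$ as a scalar, because $q^{2n}=1$ kills the hook content dependence.

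\textbf{Step 3 (mixed identity).} For $M_n$ I would square the previous mechanism. I would write $C^2-1=(C-C^{-1})C$, commute $b_R$ past $C$ using the self identity (a commutator plus an error), and use the Schur--Weyl cyclic structure: there are $n$ rotations of the merged $2n$-cycle, which produces the factor $n$. This gives $b_Lb_R(C^2-1)\equiv\eps_n n\{n\}^2 b_LC$ modulo $\{n\}^4$. I would establish the $n\{n\}^2$ coefficient by comparing traces on every $V_\lambda$ via Step 1, using the Murnaghan--Nakayama rule for removing two $n$-rim hooks. This is consistent with the power-sum relation $p_n p_n$ versus $p_{2n}$ behind \eqref{eq:framed-congruence}.

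\textbf{Step 4 (integral descent) — the main obstacle.} The trace comparison only shows membership in $[A,\GG]_K+\{n\}^k\GG_K$. To descend to $R_\alpha$ I would need $\GG/[A,\GG]$ to be $R_\alpha$-torsion-free, so that the image of $\{n\}^k\GG$ is saturated. I would try to prove this using the standard-basis description of the relative cocenter: classes are indexed by $A$-conjugacy classes of minimal-length double-coset representatives, in the spirit of Geck--Pfeiffer cocenter bases. If torsion-freeness fails at $q=\alpha$, since the algebra is non-semisimple there, the fallback is to verify both identities directly in this basis by explicit transport and transfer relations, reducing words in $C^{\pm1}b_L$ to minimal-length form.
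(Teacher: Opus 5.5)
Your proposal has a genuine gap: it never produces \emph{integral} divisibility, and that is the entire content of the theorem. Over the fraction field $K$ of $R_\alpha$ the element $\{n\}$ is a unit. Hence $[A,\GG]_K+\{n\}^k\GG_K=\GG_K$, and any trace comparison over $K$ proves nothing. What must be shown is that the classes of $S_n$ and $M_n$ lie in $\{n\}^2$, respectively $\{n\}^4$, times the lattice $\GG/[A,\GG]$.

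Torsion-freeness (your Step~4) is necessary but far from sufficient. You would still have to locate this lattice inside $\GG_K/[A_K,\GG_K]$, i.e.\ test against integral invariant functionals $Y\mapsto t(cY)$ with $c$ in the centralizer $Z_{\GG}(A)$ over $R_\alpha$. Your functionals are built from central idempotents of $A_K$, and these are not integral at $\alpha$: the hook idempotents of $\HH_n$ can have poles there because their Schur elements vanish.

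Isotypic coordinates genuinely fail to see the divisibility. For $n=3$, $C^2$ acts on the $\bigl((4,2);(3),(3)\bigr)$ piece by the twist ratio $q^{\kappa_{(4,2)}-2\kappa_{(3)}}=q^{-2}$, which is $\neq1$ at $\alpha$. So the coordinate of $[M_3]$ there is $\{3\}^2(q^{-2}-1)$ plus a multiple of $\{3\}^3$, of order exactly two.

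Step~1 is also wrong as stated. Via the trace form, $\GG_K/[A_K,\GG_K]$ is dual to the centralizer $Z_{\GG_K}(A_K)$, which has dimension $\sum_{\lambda,\mu,\nu}(c^\lambda_{\mu\nu})^2$. Traces against $Z(A_K)$ only record the trace on each multiplicity space. This loses information as soon as $c^\lambda_{\mu\nu}\ge2$ (e.g.\ $c^{(3,2,1)}_{(2,1),(2,1)}=2$). Moreover, the class of $b_LC$ on a $(\mu,\mu)$ multiplicity space is governed by the exchange, which is not scalar there: it separates the $h_2[s_\mu]$ and $e_2[s_\mu]$ parts.

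Steps~2 and~3 do not give a valid mechanism either.
\begin{itemize}
\item The element $b_n$ is not central, so it does not act on $W_\mu$ by a scalar.
\item Morton's identity \eqref{eq:relative-annulus-Bm} concerns the annular closure $B_n$ sliding past a through-arc. It does not describe a crossing change between two parallel packets inside $\GG$, where nothing encircles the right packet.
\item The claim ``$\{n\}$ times a merged element $\equiv\eps_nb_R$'' conflates $b_L$ with its closure $\{n\}P_n$, while the target is $\eps_nb_Lb_R$.
\item Step~3 applies the self identity inside the product $b_Lb_R(C-C^{-1})C$. But $[A,\GG]$ is not an ideal: $[a,Y]C=a(YC)-(YC)\varsigma(a)$ and $b_R[a,Y]=[a,b_RY]+[b_R,a]Y$.
\item Nothing in Step~3 upgrades the $\{n\}^2$ error to $\{n\}^4$ or produces the exact constant $n$.
\end{itemize}

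The paper's proof supplies exactly these missing pieces:
\begin{itemize}
\item the partial-matching expansion $C^2=\sum_kz^kR_k$ of Lemma~\ref{lem:full-grid} and its $\theta$-transport \eqref{eq:inverse-grid};
\item on the proper components $0<k<n$, relative norms over the proper Young subalgebras $B_k$, whose Schur elements are units at $\alpha$, so the Higman map detects relative commutators integrally (Lemma~\ref{lem:higman});
\item combined with this, $h_{\HH}(b_n)\in\{n\}^2\HH$ in each packet (Lemma~\ref{lem:higman-square}), which gives $\{n\}^4$ in the mixed case;
\item on the top component, the explicit identifications $M_{\mathrm{sw}}/[A,M_{\mathrm{sw}}]\cong\overline{\HH}$ and $A/[A,A]\cong\overline{\HH}\otimes\overline{\HH}$, the regular hook lattice \eqref{eq:hook-lattice}, the values $c_j(\alpha)=\eps_nn(-1)^j$ (the actual source of $n$, via $\prod_{r=1}^{n-1}(1-\alpha^{-2r})=n$), and the second-order cancellation \eqref{eq:derivative-cancel}.
\end{itemize}
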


The proof is given in Appendix~\ref{app:local-hecke}.
The two local crossing identities \eqref{eq:self-main} and
\eqref{eq:mixed-main} generalize, at the level of local Hecke
algebras, the two congruence skein relations proposed in our
previous work~\cite[Eqs.~(3) and~(4)]{CLPZ}.
In that work, we showed that these conjectural relations imply
the Hecke lifting formula; see~\cite[Eq.~(5) and
Proposition~1.7]{CLPZ}.
Here, the two local crossing identities are used to prove
Theorem~\ref{thm:equal-cycle}, which may be viewed as a
generalization of that Hecke lifting formula, formulated locally at roots of unity and valid for every integer $n\ge2$.

\section{From local identities to equal-cycle congruences}\label{sec:exterior}

The identities of Section~\ref{sec:hecke} become link congruences only after two points
are checked: the exterior pairing must annihilate the relative commutators, and the
integer framings created by oriented smoothing must be tracked consistently.  Throughout
this section $\check Z_m(K;\tau)$ denotes the \emph{ordinary framed} normalized cycle
evaluation; the signed moments of Section~\ref{sec:prelim} continue to carry the
superscript $\vec\tau$.

\subsection{Transport along exterior ribbons}
\begin{lemma}\label{lem:exterior-transport}
Remove a parallel-oriented crossing box from a framed link and replace each exterior arc
by $n$ parallel strands.  If the exterior is packet-preserving, it commutes with the
packet algebra $A$.  If it exchanges the packets, it intertwines the two $\HH_n$-actions.
These identities remain valid after adding normalized spectator colors supported away
from the two packet ribbons.
\end{lemma}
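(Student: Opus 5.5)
The plan is to read the statement as an assertion about the \emph{exterior pairing} $\langle X\rangle_E$ obtained by inserting an element $X\in\GG$ into the removed crossing box and closing it up with the framed exterior tangle $E$; here $E$ is cabled by $n$ parallel strands along each exterior arc, and the spectator colors sit in $E$. The ``intertwining'' claims are statements in the relative cocenter: for $a\in A$ and $Y\in\GG$ we want
\[
\langle aY\rangle_E=\langle Y\,\sigma_E(a)\rangle_E,
\]
where $\sigma_E$ is the identity on $A$ when $E$ is packet-preserving, and is the packet swap $x\otimes y\mapsto y\otimes x$ when $E$ exchanges the packets.

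\textbf{Pushing a braid through the exterior.} I first work in the skein of the exterior with $2n$ marked boundary points on each side of the box. An element $a=h_L\otimes h_R$ of $A$ is a linear combination of braid words supported on the left packet and on the right packet separately. Because each exterior arc is replaced by $n$ parallel strands following the blackboard (framed) ribbon, a braid placed on the top boundary of one packet can be slid along the ribbon of that arc. It arrives unchanged on the bottom boundary of the packet into which that arc feeds. The slide is an isotopy of the ribbon neighbourhood carrying the $n$ parallel strands, and it is legitimate because a framed ribbon neighbourhood is a product $I\times D^2$. The framing is exactly what makes the $n$ strands rigid parallel copies, so that the braid is transported with no extra twist factors.

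\textbf{Closing the relation.} Once the braid has been transported, it is multiplied into $Y$ from the other side. Two cases follow.
\begin{itemize}
\item If each arc returns to the same packet, the left factor returns to the left and the right factor to the right, giving $\langle aY\rangle_E=\langle Ya\rangle_E$. Since the pairing is linear, it kills $[A,\GG]$.
\item If the arcs exchange packets, then $h_L$ emerges on the right packet and $h_R$ on the left, which is the intertwining of the two $\HH_n$-actions.
\end{itemize}

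\textbf{Spectator colors.} The spectator colors $B_m$ and $Q_A$ are annular elements living in tubular neighbourhoods that are disjoint from the two packet ribbons. Choose the slide isotopy supported in a small neighbourhood of the packet ribbons. Then the slide never meets the spectators, and the same identity holds verbatim after decorating them.

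\textbf{Main obstacle.} The delicate point is the orientation and order conventions. With parallel-oriented crossings, an arc leaving the top of the box may re-enter at the bottom with the strand order inside the packet reversed or preserved, depending on the ribbon. I will check that a flat (framed, untwisted relative to the blackboard) ribbon preserves the strand order, so that the transported element is the same $h$ rather than its image under the anti-involution. Any integer framing $\tau$ on the arc contributes full twists $\Delta^{2\tau}$ of the packet. These full twists are central in $\HH_n$ and so commute with $h$; this handles arbitrary framings, though it should be recorded explicitly.
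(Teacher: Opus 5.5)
Your proposal is correct and takes essentially the same route as the paper. The paper's proof also slides an internal Hecke coupon along the product tubular neighborhood $I\times D^2$ of the exterior ribbon, uses the ribbon framing to identify the transverse disks so that the motion is a framed isotopy, and reads off preservation or exchange from the packet to which the coupon returns; your remarks on spectator support, strand order, and the centrality of the full twists are consistent elaborations of that argument.
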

\begin{proof}
Move an internal Hecke coupon along the product tubular neighborhood of the corresponding
exterior ribbon.  The ribbon framing identifies the transverse disks, so this motion is a
framed isotopy.  A packet-preserving exterior returns the coupon to the same packet; a
packet-exchanging exterior returns it to the opposite packet.
\end{proof}

If $S$ is packet-preserving, define the closure pairing
\[
 \Phi_S:\GG\longrightarrow\mathcal O_\alpha[a^{\pm1}],\qquad
 \Phi_S(Y)=\Tr(YS).
\]
Lemma~\ref{lem:exterior-transport} gives
\begin{equation}\label{eq:exterior-commutator}
 \Tr([a,Y]S)=0,\qquad a\in A,\quad Y\in\GG.
\end{equation}
Thus the pairing factors through $\GG/[A,\GG]$.  If the exterior exchanges the packets, write
$E=CS$ with $S$ packet-preserving.  Then
\begin{equation}\label{eq:mixed-exterior-conversion}
 (C-C^{-1})E=(C^2-1)S,
\end{equation}
which is the reason the mixed identity is formulated using $C^2-1$.

\begin{lemma}\label{lem:regular-closure}
Assume $\ord(\alpha^2)=n>1$.  Closure of a finite ordinary framed local tangle, possibly
with spectator colors $B_m$, lies in $\mathcal O_\alpha[a^{\pm1}]$.  More generally, an
outside factor $(q-\alpha)^s$ cannot be lowered by closure.
\end{lemma}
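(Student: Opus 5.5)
The plan is to prove Lemma~\ref{lem:regular-closure} in two steps. First I will show that the closure of an integral local tangle with spectator colors has coefficients in $\Z[z^{\pm1},a^{\pm1}]$ with poles only of the form $z^{-h}$. Then I will check that every such pole is a unit in $\mathcal O_\alpha$. The second assertion will follow from $R_\alpha$-linearity of closure.

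\textbf{Step 1: integral denominators.} Expand each spectator color by its integral Newton representative: $B_m$ is the annular closure of $b_m=z\sum_j\beta_{m,j}$, where each $\beta_{m,j}$ is an integral Hecke word. Every coupon in the tangle is an element of the integral Hecke algebra $\HH_k(\Z[z,a^{\pm1}])$, since $g_i^{-1}=g_i-z$. Expanding multilinearly, the closed diagram becomes a $\Z[z,a^{\pm1}]$-linear combination of ordinary framed links. By ordinary skein recursion, as in the proof of Lemma~\ref{lem:ordinary-connected}, each such link has HOMFLY--PT value in $z^{-h}\Z[v,a^{\pm1}]$, where $h$ is its number of components. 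So the whole closure lies in $\Z[z^{\pm1},a^{\pm1}]$, and the only possible denominators are powers of $z=q-q^{-1}$.

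\textbf{Step 2: locality at $\alpha$.} The factor $z$ vanishes only at $q=\pm1$, that is, only where $\alpha^2=1$. Since $\ord(\alpha^2)=n>1$, we have $\alpha\neq\pm1$. Hence $z$ is a unit in $\mathcal O_\alpha=\Q(\alpha)[q^{\pm1}]_{(q-\alpha)}$, and every $z^{-h}$ lies in $\mathcal O_\alpha$. Therefore the closure lies in $\mathcal O_\alpha[a^{\pm1}]$. For the ``more generally'' clause, write an element as $(q-\alpha)^sY$, where $Y$ is a local tangle with coefficients in $\mathcal O_\alpha$. The closure map is $\mathcal O_\alpha[a^{\pm1}]$-linear, so the closure equals $(q-\alpha)^s\operatorname{cl}(Y)$. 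By Step 1 applied to $Y$, with its coefficients absorbed since $\mathcal O_\alpha$ is a ring, this lies in $(q-\alpha)^s\mathcal O_\alpha[a^{\pm1}]$.

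\textbf{Expected obstacle.} The main issue is making sure that no denominator other than a power of $z$ appears. The spectator colors $B_m=\{m\}P_m$ matter here: the unnormalized $P_m$ or the idempotents $Q_A$ would introduce quantum integers $[k]$ in the denominator, and these can vanish at $\alpha$. This is why I will insist on the Newton representative $b_m$ and will not use $P_m$ itself. I will also check explicitly that the exterior tangles of Lemma~\ref{lem:exterior-transport} are ordinary framed tangles, so that no idempotents enter.
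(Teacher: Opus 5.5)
Your proposal is correct and follows the paper's proof essentially verbatim. You expand each spectator $B_m$ by the integral Newton representative $b_m$, observe that ordinary HOMFLY--PT skein reduction produces only powers of $z^{-1}$, and note that these are units in $\mathcal O_\alpha$ because $z(\alpha)=\alpha-\alpha^{-1}\neq 0$ when $\alpha^2\neq 1$. Your explicit appeal to $\mathcal O_\alpha[a^{\pm1}]$-linearity of closure for the clause about the outside factor $(q-\alpha)^s$ is a useful detail that the paper leaves implicit.
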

\begin{proof}
Expand every spectator $B_m$ by the integral Newton representative.  Ordinary HOMFLY--PT
skein reduction then introduces only powers of $z^{-1}$.  Since
$z(\alpha)=\alpha-\alpha^{-1}\ne0$ for $n>1$, these denominators are units in
$\mathcal O_\alpha$.
\end{proof}

\subsection{Framing factors}
Let $K$ be a knot with integer framing $\tau$.  Define the ordinary framed normalized
$m$-cycle evaluation by
\begin{equation}\label{eq:ordinary-framed-cycle}
 \check Z_m(K;\tau)
 :=\{m\}\mathcal H(K^\tau\star P_m;q,a)
 =a^{m\tau}\{m\}\sum_{A\vdash m}\chi_A((m))q^{\tau\kappa_A}W_A(K).
\end{equation}
For a single cycle, Lemma~\ref{lem:framed-parallel} gives the precise relation with the
signed moment of Section~\ref{sec:prelim}:
\begin{equation}\label{eq:ordinary-signed-cycle}
 \check Z^{\tau}_{(m)}(K)=(-a^{-1})^{m\tau}\check Z_m(K;\tau).
\end{equation}

\begin{lemma}[Framing congruence for an $n$-cycle]\label{lem:root-framing}
If $\ord(\alpha^2)=n$ and $\eps_n=(-1)^{n-1}$, then for every integer $\tau$,
\begin{equation}\label{eq:root-framing}
 \check Z_n(K;\tau)
 \equiv a^{n\tau}\eps_n^{\tau}\check Z_n(K;0)
 \pmod{(q-\alpha)^2}.
\end{equation}
\end{lemma}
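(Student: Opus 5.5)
The plan is to reduce everything to the hook expansion of $P_n$ and then expand the twist eigenvalues around $q=\alpha$. By \eqref{eq:Pmu}, $\chi_A((n))\neq0$ only for the hooks $A_k=(n-k,1^k)$, $0\le k\le n-1$, where $\chi_{A_k}((n))=(-1)^k$. A direct computation from \eqref{eq:twist-eigen} gives
\[
\kappa_{A_k}=(n-k)(n-k-1)-k(k+1)=n(n-1)-2kn .
\]
So \eqref{eq:ordinary-framed-cycle} becomes
\[
\check Z_n(K;\tau)=a^{n\tau}\{n\}\,q^{\tau n(n-1)}\sum_{k=0}^{n-1}(-1)^k q^{-2kn\tau}W_{A_k}(K).
\]
The factor $a^{n\tau}$ matches the right-hand side of \eqref{eq:root-framing}. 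It therefore suffices to prove
\[
\{n\}\sum_{k}(-1)^k\bigl(q^{\tau\kappa_{A_k}}-\eps_n^\tau\bigr)W_{A_k}(K)\in(q-\alpha)^2\mathcal O_\alpha[a^{\pm1}].
\]

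First step: the values at $\alpha$. Since $\alpha^2$ is a primitive $n$th root of unity, $\alpha^{2n}=1$. Moreover $\alpha^{n(n-1)}=\eps_n$: for $n$ even, $\alpha^n=(\alpha^2)^{n/2}=-1$ and $n-1$ is odd; for $n$ odd, $n-1$ is even and $\alpha^{n(n-1)}=(\alpha^{2n})^{(n-1)/2}=1$. Hence every eigenvalue satisfies $q^{\tau\kappa_{A_k}}\equiv\eps_n^\tau \pmod{(q-\alpha)}$.

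Next, expand to first order. The derivative of $q^{\tau\kappa_{A_k}}$ at $\alpha$ is $\tau\kappa_{A_k}\alpha^{-1}\eps_n^\tau$, so
\[
q^{\tau\kappa_{A_k}}-\eps_n^\tau=\tau\eps_n^\tau\alpha^{-1}\,n(n-1-2k)\,(q-\alpha)+(q-\alpha)^2(\cdots),
\]
with a remainder that is polynomial in $q^{\pm1}$. The main difficulty is that $W_{A_k}$ is the unreduced invariant, so it may have a simple pole at $\alpha$: the hook length $n$ appears in the quantum dimension. Only $\{n\}W_{A_k}$ can safely be assumed to lie in $\mathcal O_\alpha[a^{\pm1}]$, and I would check this by expanding through $b_n$ as in Lemma~\ref{lem:regular-closure}.

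The second-order remainder is then fine, since $(q-\alpha)^2\{n\}W_{A_k}\in(q-\alpha)^2\mathcal O_\alpha$. The proof thus reduces to the vanishing at $q=\alpha$ of
\[
\Theta(K):=\{n\}\sum_{k}(-1)^k(n-1-2k)W_{A_k}(K)=\frac1n\{n\}\sum_A\chi_A((n))\kappa_AW_A(K).
\]
Equivalently, I need $\frac{d}{d\tau}\big|_{\tau=0}$ of the normalized twisted cycle to vanish at $\alpha$.

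I expect this vanishing to be the main obstacle, and I would attack it in two ways.

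The first route uses transpose symmetry. Under $k\mapsto n-1-k$ we have $A_k\mapsto A_k^t$, $(-1)^k\mapsto\eps_n(-1)^k$ and $\kappa\mapsto-\kappa$. Combined with the standard symmetry $W_{A^t}(q,a)=\pm W_A(-q^{-1},a)$, this should pair the terms of $\Theta$ so that their residues at $\alpha$ cancel. This uses that $-\alpha^{-1}$ has the same order condition and that $\{n\}$ changes by the corresponding sign.

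The fallback route works in the Hecke algebra $\HH=\HH_n(R_\alpha)$. The positive full twist $\Delta^2$ is central in $\HH$, and $K^\tau\star P_n$ is the satellite closure of $b_n\Delta^{2\tau}$, up to the power of $a$. It would then suffice to show
\[
b_n\Delta^2\equiv a^n\eps_n b_n \pmod{(q-\alpha)^2\HH+\text{commutators}}.
\]
For this I would use the factorization $\Delta^2=\Delta_{n-1}^2\cdot(g_{n-1}\cdots g_1g_1\cdots g_{n-1})$, together with the fact that $b_n$ is a sum of Coxeter-type words. The Jucys--Murphy element then acts on the hook-supported part $b_n$ by a scalar that is congruent to $\eps_n$ modulo $\{n\}$, and the residual first-order term is traced against the regular closure of Lemma~\ref{lem:regular-closure}. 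Finally, iterating in $\tau$ (including negative $\tau$, via inverses) gives \eqref{eq:root-framing} for every integer $\tau$.
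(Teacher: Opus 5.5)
Your reduction is correct: expanding $q^{\tau\kappa_{A_k}}-\eps_n^\tau$ to first order at $\alpha$, and using $\{n\}W_{A_k}\in\mathcal O_\alpha[a^{\pm1}]$, does reduce the lemma to $\Theta(K)|_{q=\alpha}=0$. The gap is that this vanishing is the whole content of the lemma, and neither of your routes proves it.

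For the first route, put $r_k:=\{n\}W_{A_k}(K)$, viewed as a function of $q$. The involution $W_{A^t}(q,a)=W_A(-q^{-1},a)$ together with $\{n\}_q=\eps_n\{n\}_{-q^{-1}}$ gives only $r_{n-1-k}(q)=\eps_n r_k(-q^{-1})$. Pairing $k$ with $n-1-k$ in $\Theta(\alpha)$ therefore leaves $(-1)^k(n-1-2k)\bigl(r_k(\alpha)-r_k(-\alpha^{-1})\bigr)$. For $n\ge3$ the point $-\alpha^{-1}$ differs from $\alpha$. The symmetries $q\mapsto -q$ and $q\mapsto q^{-1}$ only relate values at different points of the orbit of $\alpha$, so by themselves they say nothing about this difference.

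For the second route, the Jucys--Murphy factor $g_{n-1}\cdots g_1g_1\cdots g_{n-1}$ does not act by a scalar on a hook block $(n-k,1^k)$ with $0<k<n-1$, since the box containing $n$ can have content $n-k-1$ or $-k$. Moreover, the first-order remainder you propose to ``trace against the regular closure'' is again just $\Theta$. The congruence $b_n\Delta^2\equiv\eps_n b_n$ modulo $(q-\alpha)^2$ and commutators is in fact true, but proving it needs exactly the input that is missing from route one.

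That missing input is that all hook invariants have the same residue at $\alpha$, up to the sign $(-1)^k$. By character inversion,
$r_k=\sum_{\mu\vdash n}\frac{\chi_{A_k}(\mu)}{\mathfrak z_\mu}\frac{\{n\}}{\{\mu\}}\check Z^0_\mu(K)$.
For $\mu\ne(n)$ every part is $<n$, so $\{n\}/\{\mu\}$ vanishes at $\alpha$, while $\check Z^0_\mu(K)$ is regular by Lemma~\ref{lem:regular-closure}. This proves at once that $r_k\in\mathcal O_\alpha[a^{\pm1}]$ (which you left to be checked) and that $r_k(\alpha)=\frac{(-1)^k}{n}\check Z_n(K;0)|_{q=\alpha}$ for every $k$. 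Hence
$\Theta(K)|_{q=\alpha}=\frac1n\check Z_n(K;0)|_{q=\alpha}\sum_{k=0}^{n-1}(n-1-2k)=0$.
Your Taylor argument then closes for all integers $\tau$ simultaneously, with no iteration in $\tau$.

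Repaired this way, your proof is a Schur-basis version of the paper's. The paper passes entirely to the power-sum basis via the hook identity $\sum_A\chi_A((n))\chi_A(\mu)q^{\tau\kappa_A}=\prod_i\{n\tau\mu_i\}/\{n\tau\}$. It then discards every $\mu\ne(n)$, since such terms vanish to order $\ell(\mu)\ge2$. Second-order vanishing of the $(n)$-coefficient comes from the $x\mapsto x^{-1}$ symmetry of $(x^n-x^{-n})/(x-x^{-1})$ at $x=\pm1$. Your cancellation $\sum_k\kappa_{A_k}=0$ is the same fact, seen hook by hook.
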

\begin{proof}
This lemma can be regarded as a generalization of the prime-cycle framing congruence in \cite{CLPZ}, but the proof is the same.  The case $\tau=0$ is immediate.  For $\tau\ne0$, character inversion and the
standard hook-character identity give
\begin{equation}\label{eq:hook-frame}
 \check Z_n(K;\tau)
 =a^{n\tau}\frac{\{n\}}{\{n\tau\}}
 \sum_{\mu\vdash n}\frac{\check Z^0_\mu(K)}{\mathfrak z_\mu}
 \prod_{i=1}^{\ell(\mu)}\frac{\{n\tau\mu_i\}}{\{\mu_i\}}.
\end{equation}
If $\mu\ne(n)$, then $\ell(\mu)\ge2$ and every part satisfies $0<\mu_i<n$.
Hence $\{\mu_i\}(\alpha)\ne0$, while $\{n\tau\mu_i\}(\alpha)=0$, so the
corresponding term has order at least two at $q=\alpha$.

For $\mu=(n)$ the coefficient is
\[
 R_{n,\tau}(q)=\frac1n\frac{\{n^2\tau\}}{\{n\tau\}}.
\]
Putting $x=q^{n\tau}$ gives
\[
 R_{n,\tau}(q)=\frac1n\frac{x^n-x^{-n}}{x-x^{-1}}.
\]
At $q=\alpha$, $x=\alpha^{n\tau}=\pm1$, and
$R_{n,\tau}(\alpha)=\eps_n^\tau$.  The Laurent polynomial
$(x^n-x^{-n})/(x-x^{-1})$ is invariant under $x\mapsto x^{-1}$; hence its first
derivative vanishes at the fixed points $x=\pm1$.  Therefore
$R_{n,\tau}(q)=\eps_n^\tau+O((q-\alpha)^2)$, proving the lemma.
\end{proof}

\begin{lemma}[Framing under oriented smoothing]\label{lem:smoothing-framing}
Let $c$ be a crossing of sign $\sigma(c)\in\{\pm1\}$ in an oriented framed diagram,
and give the oriented smoothing the inherited ribbon framing.
\begin{enumerate}[label=(\roman*)]
\item If $c$ is a mixed crossing between distinct components $K_i,K_j$ with framings
$\tau_i,\tau_j$, the smoothing merges them into $K_{ij}$ and
\begin{equation}\label{eq:mixed-smoothing-framing}
 \tau(K_{ij})=\tau_i+\tau_j+2\operatorname{lk}(K_i,K_j)-\sigma(c),
\end{equation}
so
\begin{equation}\label{eq:mixed-smoothing-parity}
 \tau(K_{ij})\equiv\tau_i+\tau_j+1\pmod2.
\end{equation}
\item If $c$ is a self-crossing of $K$ and the smoothing splits it into $K',K''$, then
\begin{equation}\label{eq:self-smoothing-framing}
 \tau(K)=\tau(K')+\tau(K'')+2\operatorname{lk}(K',K'')+\sigma(c),
\end{equation}
and hence
\begin{equation}\label{eq:self-smoothing-parity}
 \tau(K)\equiv\tau(K')+\tau(K'')+1\pmod2.
\end{equation}
\end{enumerate}
Thus every oriented smoothing changes the parity of the sum of the individual framings by
exactly one.
\end{lemma}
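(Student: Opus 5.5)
The plan is to reduce everything to a writhe count in a blackboard-framed diagram. First I would normalize the diagram. By adding small Reidemeister~I curls away from the crossing $c$, we may assume each framed component is blackboard framed. This changes neither the framed link nor the inherited ribbon framing of the smoothing, because the curls are local and disjoint from $c$. The framing of any component $J$ of a blackboard-framed diagram is then its self-writhe
\[
\tau(J)=\sum_{x\ \text{a self-crossing of }J}\sigma(x),
\]
and for two components $J,J'$ one has $\sum_{x\in J\cap J'}\sigma(x)=2\operatorname{lk}(J,J')$. The oriented smoothing at $c$ deletes $c$ and leaves every other crossing of the diagram in place. It also preserves all strand orientations, so every remaining crossing keeps its sign. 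Finally, the smoothed diagram is again blackboard framed, and its blackboard framing is exactly the inherited ribbon framing.

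For (i), the crossing $c$ is mixed between $K_i$ and $K_j$, and the smoothing merges them into a single component $K_{ij}$. I would classify the self-crossings of $K_{ij}$. They are the self-crossings of $K_i$, the self-crossings of $K_j$, and all mixed crossings between $K_i$ and $K_j$ other than $c$. Summing the signs over these three classes gives
\[
\tau(K_{ij})=\tau_i+\tau_j+2\operatorname{lk}(K_i,K_j)-\sigma(c),
\]
which is \eqref{eq:mixed-smoothing-framing}.

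For (ii), the crossing $c$ is a self-crossing of $K$, and the smoothing splits $K$ into $K'$ and $K''$. The self-crossings of $K$ other than $c$ fall into three classes: the self-crossings of $K'$, the self-crossings of $K''$, and the crossings between $K'$ and $K''$. The signs of the last class sum to $2\operatorname{lk}(K',K'')$. Adding $\sigma(c)$ for the crossing $c$ itself gives \eqref{eq:self-smoothing-framing}.

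The parity statements \eqref{eq:mixed-smoothing-parity} and \eqref{eq:self-smoothing-parity} then follow because $2\operatorname{lk}$ is even and $\sigma(c)=\pm1$ is odd. The closing sentence of the lemma follows because all components not involved in the smoothing keep their framings, so the total framing sum changes by an odd amount. The only point needing care is the first step: one must check that the inherited ribbon framing agrees with the blackboard framing of the smoothed diagram. This holds because the ribbon is kept flat in the plane through the smoothing. It is also why the curls must be inserted before smoothing.
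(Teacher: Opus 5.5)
Your proof is correct and takes essentially the same route as the paper. Both reduce to blackboard framings and compare self-writhes, with the remaining $K_i$--$K_j$ crossings (respectively the $K'$--$K''$ crossings together with $c$) producing the $2\operatorname{lk}\mp\sigma(c)$ term. The only difference is cosmetic: you make the diagram blackboard-framed by inserting curls away from $c$ before smoothing, while the paper adds the extra full twists afterwards, noting that they add under merging and split additively under separation.
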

\begin{proof}
For blackboard framings the formulas follow by comparing self-writhes.  In the mixed case,
after smoothing, every remaining crossing between $K_i$ and $K_j$ becomes a self-crossing
of $K_{ij}$; their signed sum is
$2\operatorname{lk}(K_i,K_j)-\sigma(c)$.  In the self-crossing case, after smoothing,
the crossings between $K'$ and $K''$ have signed sum $2\operatorname{lk}(K',K'')$,
and restoring the removed crossing contributes $\sigma(c)$.  For arbitrary ribbon
framings, the additional full twists add under merging and split additively under
separation.  Reduction modulo two gives the parity statements.
\end{proof}

\subsection{The knot case}
More generally, for a framed diagram $D$ of an $s$-component labeled link with one
$m_i$-cycle on the $i$th component, write
\[
 \check Z_{(m_1,\ldots,m_s)}(D)
 :=\left(\prod_{i=1}^s\{m_i\}\right)
\mathcal H\bigl(D\star(P_{m_1},\ldots,P_{m_s})\bigr).
\]
This is always the ordinary framed skein realization.

\begin{proposition}[Knot congruence]\label{prop:uniform-knot}
If $\ord(\alpha^2)=n$, then every framed knot $(K,\tau)$ satisfies
\begin{equation}\label{eq:uniform-knot}
 \check Z_n(K;\tau)
 \equiv\eps_n^\tau\Psi_n\check Z_1(K;\tau)
 \pmod{(q-\alpha)^2}.
\end{equation}
\end{proposition}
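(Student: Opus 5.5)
We first reduce to framing zero. By \eqref{eq:ordinary-framed-cycle} with $m=1$ we have $\kappa_{(1)}=0$, so $\check Z_1(K;\tau)=a^{\tau}\check Z_1(K;0)$ and therefore $\Psi_n\check Z_1(K;\tau)=a^{n\tau}\Psi_n\check Z_1(K;0)$. Lemma~\ref{lem:root-framing} gives $\check Z_n(K;\tau)\equiv a^{n\tau}\eps_n^\tau\check Z_n(K;0)$ modulo $(q-\alpha)^2$. The claim \eqref{eq:uniform-knot} is thus equivalent to
\[
\check Z_n(K;0)\equiv\Psi_n\check Z_1(K;0)\pmod{(q-\alpha)^2}.
\]
The case of a single knot does not close under skein induction, because smoothing a self-crossing produces two components. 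We therefore plan to prove a stronger statement for all framed diagrams $D$ with $s$ components, each carrying an $n$-cycle:
\[
\check Z_{(n,\ldots,n)}(D)\equiv\eps_n^{\sum_i\tau_i}\,\Psi_n\check Z_{(1,\ldots,1)}(D)\pmod{(q-\alpha)^2}.
\]
Here $\check Z_{(1,\ldots,1)}(D)=z^s\mathcal H(D)$. The proposition is the case $s=1$, combined with the reduction above.

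We argue by induction on the number of crossings, with a secondary induction on the distance to a descending (split, unknotted) diagram. The base case is a split union of framed unknots. Both sides factor, since $\Psi_n$ and $\check Z$ are multiplicative over split unions. For the $0$-framed unknot, $\check Z_n=a^n-a^{-n}=\Psi_n(a-a^{-1})$ holds exactly. A framed unknot is handled by Lemma~\ref{lem:root-framing} as above.

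For the inductive step, we change one crossing, of sign $\sigma$. On the right-hand side, the ordinary skein relation gives
\[
\Psi_n\check Z_{(1,\ldots)}(D_+)-\Psi_n\check Z_{(1,\ldots)}(D_-)=\pm\{n\}^2\,\Psi_n\bigl(z^{s'}\mathcal H(D_0)\bigr),
\]
where $D_0$ has $s'=s\pm1$ components. On the left-hand side, we cut out the crossing box with $n$-strand packets. The cycle colors are represented by the coupons $b_L,b_R$, and the crossing difference is $C-C^{-1}$ (self-crossing) or is converted via \eqref{eq:mixed-exterior-conversion} into $(C^2-1)S$ (mixed crossing). We then apply Theorem~\ref{thm:relative-hecke}. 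The commutator part $[A,\GG]$ dies under the closure pairing by \eqref{eq:exterior-commutator} and Lemma~\ref{lem:exterior-transport}. The remainders $\{n\}^2\GG$ and $\{n\}^4\GG$ stay divisible after closure by Lemma~\ref{lem:regular-closure}, with one factor $\{n\}$ possibly absorbed by the normalization. What survives is $\eps_n b_Lb_R$, which closes to $\eps_n\check Z_{(n,n,\ldots)}(D_0)$ in the self-crossing case, or the mixed term $\eps_n n\{n\}^2b_LC$, which closes to a single $n$-cycle on the merged component. We then apply the induction hypothesis to $D_0$, which has fewer crossings.

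The framing signs match by Lemma~\ref{lem:smoothing-framing}. Every oriented smoothing changes $\sum\tau_i$ by an odd amount, which absorbs exactly the extra $\eps_n$. The remaining normalizing constants ($\{n\}$ versus $\Psi_n z=\{n\}$, and $n\{n\}^2$ versus $\Psi_n$ of $z^2$ together with the factor $n$ from merging cycles) must be checked to agree modulo $(q-\alpha)^2$.

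I expect the main obstacle to be the exact bookkeeping in the mixed-crossing case. There one must verify that the factor $n\{n\}^2$ and the closure of $b_LC$ reproduce $\Psi_n$ of the ordinary smoothing term $z^2\mathcal H(D_0)$ with the right power of $\{n\}$. One must also verify that the spectator cycle colors on the other components pass through the transport lemma without lowering the order at $q=\alpha$.
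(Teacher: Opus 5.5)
Your argument is correct, but it is organized differently from the paper's. You say the knot case does not close under skein induction. The paper closes it anyway, using Theorem~\ref{thm:gcd}:
\begin{itemize}
\item After \eqref{eq:self-main} gives $\check Z_n(D_+)-\check Z_n(D_-)\equiv\eps_n\check Z_{(n,n)}(D_0)$, the gcd estimate puts the connected parts $\check F_{(n,n)}(D_0)$ and $\Psi_n\check F_{(1,1)}(D_0)$ in $\{n\}^2\Z[v,a^{\pm1}]$.
\item So both two-component moments factor modulo $(q-\alpha)^2$ into the knot moments of $K'$ and $K''$, and the induction hypothesis is applied to those knots.
\item The sign comes from \eqref{eq:self-smoothing-parity}, and the base case is the framed unknot via Lemma~\ref{lem:root-framing}.
\end{itemize}
Thus the paper uses only the self identity plus the connected-cumulant estimate. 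You bypass Theorem~\ref{thm:gcd} by strengthening to $n$-colored links and handling mixed crossings with \eqref{eq:mixed-main}. Given Theorem~\ref{thm:gcd}, your strengthened statement is equivalent modulo $(q-\alpha)^2$ to the product of the knot statements. What your route buys is an induction that is self-contained at the level of skein and Hecke identities. Your preliminary reduction to $\tau=0$ is harmless but unnecessary, since framings reappear in the strengthened induction anyway.

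The obstacle you anticipate in the mixed case is illusory.
\begin{itemize}
\item There the left side changes, up to sign, by $\eps_n n\{n\}^2\check Z_{(n^{s-1})}(D_0)$ plus a term in $\{n\}^4\mathcal O_\alpha[a^{\pm1}]$.
\item The right side changes by $\eps_n^{\tau(D)}\{n\}^2\Psi_n\check Z_{(1^{s-1})}(D_0)$.
\item Both closures are regular at $\alpha$. For the left side this is Lemma~\ref{lem:regular-closure}; for the right side, $z^{h}\mathcal H\in\Z[v,a^{\pm1}]$ for an $h$-component link, so $\Psi_n$ introduces no pole.
\end{itemize}
Hence both differences vanish modulo $(q-\alpha)^2$, and nothing has to be matched. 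In fact the leading terms do not agree: there is no factor $n$ on the right. Only $\{n\}^2$-divisibility of a mixed crossing change is used, far less than \eqref{eq:mixed-main} provides.

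There are two slips to fix.
\begin{itemize}
\item In the self-crossing case the right side changes by exactly $\Psi_n\check Z_{(1^{s+1})}(D_0)$, with no factor $\{n\}^2$, because $z^s\cdot z\,\mathcal H(D_0)=\check Z_{(1^{s+1})}(D_0)$.
\item No factor $\{n\}$ is ``absorbed by the normalization'' when you close. The coupon $b_n$ closes to exactly $B_n=\{n\}P_n$, which is the normalization built into $\check Z$. If a factor $\{n\}$ were lost in the self case, you would obtain only a first-order congruence.
\end{itemize}
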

\begin{proof}
Let $D_+,D_-,D_0$ be a self-crossing triple, with $D_0=K'\cup K''$ carrying the
inherited framings.  The self-crossing identity \eqref{eq:self-main}, together with
Lemmas~\ref{lem:exterior-transport} and \ref{lem:regular-closure}, gives
\begin{equation}\label{eq:knot-upper-recurrence}
 \check Z_n(D_+)-\check Z_n(D_-)
 \equiv\eps_n\check Z_{(n,n)}(D_0)
 \pmod{(q-\alpha)^2}.
\end{equation}
By the moment--cumulant formula and Theorem~\ref{thm:gcd},
\begin{equation}\label{eq:knot-upper-factorization}
 \check Z_{(n,n)}(D_0)
 \equiv\check Z_n(K';\tau')\check Z_n(K'';\tau'')
 \pmod{(q-\alpha)^2}.
\end{equation}
For the fundamental cycle, the HOMFLY--PT skein relation gives the exact identity
\[
 \check Z_1(D_+)-\check Z_1(D_-)=\check Z_{(1,1)}(D_0).
\]
Again Theorem~\ref{thm:gcd} gives
$\check Z_{(1,1)}(D_0)-\check Z_1(K';\tau')\check Z_1(K'';\tau'')\in z^2\Z[v,a^{\pm1}]$.
Applying $\Psi_n$ and using $\Psi_n(z^2)=\{n\}^2$ yields
\begin{equation}\label{eq:knot-lower-factorization}
 \Psi_n\check Z_{(1,1)}(D_0)
 \equiv
 \Psi_n\check Z_1(K';\tau')\,\Psi_n\check Z_1(K'';\tau'')
 \pmod{(q-\alpha)^2}.
\end{equation}
If the induction hypothesis is applied to $K'$ and $K''$, the leading term in
\eqref{eq:knot-upper-recurrence} acquires the sign
$\eps_n^{1+\tau'+\tau''}$.  By
\eqref{eq:self-smoothing-parity}, this equals $\eps_n^{\tau(D)}$.  Thus both sides of
\eqref{eq:uniform-knot} satisfy the same crossing-change recursion.  Switching crossings
to a descending diagram reduces to a framed unknot, where the claim follows from
Lemma~\ref{lem:root-framing} and
$\check Z_n(U;0)=a^n-a^{-n}=\Psi_n\check Z_1(U;0)$.
\end{proof}

\subsection{Equal cycle lengths}
For an ordinary framed labeled link $\mathcal L=K_1\cup\cdots\cup K_r$, let
$H^{\mathrm{ord}}_{(m_1,\ldots,m_r)}(\mathcal L)$ denote the connected normalization of
\eqref{eq:Hlambda-def} built from the ordinary framed moments above.  In particular,
\[
 H^{\mathrm{ord}}_{(n^r)}(\mathcal L)=\frac{\check F^{\mathrm{ord}}_{(n^r)}(\mathcal L)}{\{n\}^{2r-2}},
 \qquad
 H^{\mathrm{ord}}_{(1^r)}(\mathcal L)=\frac{\check F^{\mathrm{ord}}_{(1^r)}(\mathcal L)}{z^{2r-2}}.
\]

\begin{theorem}[Equal-cycle congruence]\label{thm:equal-cycle}
Let $\mathcal L$ be a framed link with $r\ge1$ labeled components, each carrying $B_n$, and put
$\tau(\mathcal L)=\sum_i\tau(K_i)$.  If $\ord(\alpha^2)=n$, then
\begin{equation}\label{eq:equal-cycle}
 H^{\mathrm{ord}}_{(n^r)}(\mathcal L)
 \equiv n^{r-1}\eps_n^{\tau(\mathcal L)}
 \Psi_nH^{\mathrm{ord}}_{(1^r)}(\mathcal L)
 \pmod{(q-\alpha)^2}.
\end{equation}
\end{theorem}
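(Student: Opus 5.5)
We prove \eqref{eq:equal-cycle} by induction on $r$ and, for fixed $r$, on the number of mixed crossings that must be changed to split off the first component. The base case $r=1$ is Proposition~\ref{prop:uniform-knot}. There $H^{\mathrm{ord}}_{(n)}=\check Z_n(K;\tau)$ and $H^{\mathrm{ord}}_{(1)}=\check Z_1(K;\tau)$, and the factor $n^{0}=1$, so the base case holds as stated.

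For $r\ge2$ we use a two-sided crossing-change recursion. Both sides vanish on a split link: the left side by Lemma~\ref{lem:cumulant-contraction}(i), and the right side because $\Psi_n$ of zero is zero. It therefore suffices to compare the jumps of the two sides at a mixed crossing $c$ between $K_i$ and $K_j$.

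\textbf{Upper side.} Place the crossing box with the two $n$-strand packets carrying $b_L,b_R$. If the exterior exchanges the packets, write it as $E=CS$ and use \eqref{eq:mixed-exterior-conversion}, so that the jump of the $B_n\otimes B_n$ evaluation is $\Tr(b_Lb_R(C^2-1)S)$. Identity \eqref{eq:mixed-main}, together with \eqref{eq:exterior-commutator} and Lemma~\ref{lem:regular-closure}, replaces this jump by $\eps_n n\{n\}^2\Tr(b_LCS)$ modulo $\{n\}^4$. Geometrically, $\Tr(b_LCS)$ is the closure in which the two packets are joined into a single component carrying one $B_n$; it is the $n$-cabled version of the oriented smoothing $D_0$.

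This is a statement about moments of sublinks containing both $i$ and $j$, and moments of sublinks omitting $i$ or $j$ do not change. Lemma~\ref{lem:cumulant-contraction}(ii) then gives
\[
\Delta\check F^{\mathrm{ord}}_{(n^r)}(\mathcal L)\equiv \eps_n n\{n\}^2\check F^{\mathrm{ord}}_{(n^{r-1})}(\mathcal L_{i\sim j})\pmod{\{n\}^4\cdot(\text{error})}.
\]
Dividing by $\{n\}^{2r-2}$ yields $\Delta H^{\mathrm{ord}}_{(n^r)}\equiv \eps_n n H^{\mathrm{ord}}_{(n^{r-1})}(\mathcal L_{i\sim j})$ modulo $(q-\alpha)^2$.

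\textbf{Main obstacle.} The error term must be controlled so that division by $\{n\}^{2r-2}$ leaves something divisible by $(q-\alpha)^2$. The remainder in \eqref{eq:mixed-main} is only $\{n\}^4\GG$. After closure with the other $r-2$ spectator $B_n$ colors and cumulant extraction, we need it to be divisible by $\{n\}^{2r}$. We plan to handle this as follows:

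\begin{itemize}
\item Apply the anchor estimate of Proposition~\ref{prop:anchor} to the remainder: each remaining spectator label $B_n$ connected to the merged piece contributes $\{n\}^2$.
\item The remainder term lives on the merged two-packet region, which plays the role of an ordinary anchor cluster.
\item Counting factors, this yields $\{n\}^{4}\cdot\{n\}^{2(r-2)}=\{n\}^{2r}$.
\end{itemize}

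This requires checking that the elements of $\{n\}^4\GG$ arising from \eqref{eq:mixed-main} can be written as integral Hecke words, so that Lemma~\ref{lem:regular-closure} and Proposition~\ref{prop:anchor} apply. It also requires checking that the commutator part $[A,\GG]$ is killed in every sublink moment, which holds because Lemma~\ref{lem:exterior-transport} allows spectators.

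\textbf{Lower side and matching.} The ordinary skein relation gives exactly $\Delta\check F^{\mathrm{ord}}_{(1^r)}=z\,\check F^{\mathrm{ord}}_{(1^{r-1})}(\mathcal L_0)$ with $z=\{1\}$. Hence
\[
\Delta H^{\mathrm{ord}}_{(1^r)}=z^{-1}H^{\mathrm{ord}}_{(1^{r-1})}(\mathcal L_0).
\]
This does not match the upper side directly, so we instead apply the mixed identity with $n=1$ formally, or use the ordinary relation $\mathcal L_+-\mathcal L_-$ for the packet-exchanging case. In that case $C^2-1$ corresponds to changing the crossing twice, and the jump of $\check F_{(1^r)}$ is $z\cdot z\,\check F(\mathcal L_{i\sim j})$ (a double crossing change through the exchanging exterior). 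This gives $\Delta H_{(1^r)}=H_{(1^{r-1})}(\mathcal L_{i\sim j})$ up to the same bookkeeping. Applying $\Psi_n$ turns $z^2$ into $\{n\}^2$.

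By the induction hypothesis,
\[
H^{\mathrm{ord}}_{(n^{r-1})}(\mathcal L_{i\sim j})\equiv n^{r-2}\eps_n^{\tau(\mathcal L_{i\sim j})}\Psi_nH^{\mathrm{ord}}_{(1^{r-1})}(\mathcal L_{i\sim j}).
\]
The extra factor $\eps_n n$ from the upper side raises $n^{r-2}$ to $n^{r-1}$. By \eqref{eq:mixed-smoothing-parity}, the sign satisfies $\eps_n^{1+\tau(\mathcal L_{i\sim j})}=\eps_n^{\tau(\mathcal L)}$. Thus both sides of \eqref{eq:equal-cycle} have the same jumps and the same split endpoint, which completes the induction.

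The delicate points are the exact identification of the $n=1$ analogue of the mixed-crossing jump and the $\{n\}^{2r}$ divisibility of the remainder. We expect the latter to be the main work.
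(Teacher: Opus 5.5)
Your overall argument follows the paper's proof. It has the same induction on $r$ with Proposition~\ref{prop:uniform-knot} as base case and the same mixed-crossing recursion from \eqref{eq:mixed-main} with leading coefficient $\eps_n n\{n\}^2$. It controls the error $\{n\}^4\cdot\{n\}^{2(r-2)}=\{n\}^{2r}$ through Proposition~\ref{prop:anchor} exactly as the paper does, in one sentence. The parity bookkeeping via \eqref{eq:mixed-smoothing-parity} and the vanishing at the split endpoint are also the paper's. So the ``main obstacle'' you flag is not where the trouble lies.

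The step that is actually wrong is your lower-side computation. Each label carries $B_1=\{1\}P_1$, which is $z$ times an ordinary strand. So a sublink moment is $M(T)=z^{|T|}\mathcal H(\mathcal L_T)$, while the merged moment is $M(T_{i\sim j})=z^{|T|-1}\mathcal H\bigl((\mathcal L_0)_{T_{i\sim j}}\bigr)$. The skein relation then gives $\Delta M(T)=z\cdot z^{|T|}\mathcal H\bigl((\mathcal L_0)_{T_{i\sim j}}\bigr)=z^2M(T_{i\sim j})$. Hence Lemma~\ref{lem:cumulant-contraction}(ii) applies with $c=z^2$, not $c=z$, and $\Delta\check F^{\mathrm{ord}}_{(1^r)}=z^2\check F^{\mathrm{ord}}_{(1^{r-1})}(\mathcal L_0)$. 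Dividing by $z^{2r-2}$ gives $\Delta H^{\mathrm{ord}}_{(1^r)}=H^{\mathrm{ord}}_{(1^{r-1})}(\mathcal L_0)$ exactly, which is the recursion the paper uses. Your first formula, $\Delta H^{\mathrm{ord}}_{(1^r)}=z^{-1}H^{\mathrm{ord}}_{(1^{r-1})}(\mathcal L_0)$, is false.

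The justification you give for the patch is also wrong. The rewriting $(C-C^{-1})E=(C^2-1)S$ is a single crossing change through the packet-exchanging exterior, not ``changing the crossing twice.'' The second factor $z$ is the normalization surplus when two $B_1$-labels fuse into one. The same surplus is already built into the factor $\{n\}^2$ on your upper side. Your ``formal $n=1$ mixed identity'' is in fact exact: $b_1=z$, $\eps_1=1$ and $g^2-1=zg$ give $b_Lb_R(C^2-1)=z^3C=1\cdot\{1\}^2\,b_LC$ with no remainder. With this correction, the two recursions match as you describe and the induction closes.
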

\begin{proof}
For $r=1$ this is Proposition~\ref{prop:uniform-knot}.  Suppose $r>1$ and choose a mixed
crossing.  The mixed identity \eqref{eq:mixed-main} has leading term
$\eps_n n\{n\}^2$ times the contracted label and remainder divisible by $\{n\}^4$.
Inside the $r$-label cumulant, the remaining $r-2$ normalized cycle labels contribute
$\{n\}^{2(r-2)}$ by Proposition~\ref{prop:anchor}; hence the total unnormalized error is
divisible by $\{n\}^{2r}$.  Dividing by the connected normalization
$\{n\}^{2r-2}$ leaves a second-order error and gives
\begin{equation}\label{eq:equal-cycle-recurrence}
 H^{\mathrm{ord}}_{(n^r)}(\mathcal L_+)-H^{\mathrm{ord}}_{(n^r)}(\mathcal L_-)
 \equiv\eps_n n\,H^{\mathrm{ord}}_{(n^{r-1})}(\mathcal L_0)
 \pmod{(q-\alpha)^2}.
\end{equation}
For the fundamental cycle, contraction of connected cumulants gives exactly
\[
 H^{\mathrm{ord}}_{(1^r)}(\mathcal L_+)-H^{\mathrm{ord}}_{(1^r)}(\mathcal L_-)
 =H^{\mathrm{ord}}_{(1^{r-1})}(\mathcal L_0).
\]
Apply the induction hypothesis to $\mathcal L_0$.  By
\eqref{eq:mixed-smoothing-parity},
$\tau(\mathcal L_0)\equiv\tau(\mathcal L)+1\pmod2$, so the extra local factor $\eps_n$ in
\eqref{eq:equal-cycle-recurrence} changes $\eps_n^{\tau(\mathcal L_0)}$ into
$\eps_n^{\tau(\mathcal L)}$.  Hence the difference between the two sides of
\eqref{eq:equal-cycle} is invariant modulo $(q-\alpha)^2$ under mixed crossing changes.
Switch mixed crossings until the labeled link splits into two nonempty groups; both
connected cumulants then vanish by Lemma~\ref{lem:cumulant-contraction}.  The result follows.
\end{proof}

\section{Cabling and arbitrary cycle lengths}\label{sec:cable}

We now pass from equal cycle lengths to arbitrary cycle tuples and complete the proof of
Theorem~\ref{thm:framed-congruence}.  The algebraic extension below is introduced only
to interpolate the cable slope; no fractional powers of $q$ or $a$ occur.

\subsection{Cycle form of the cable formula}
For integers $m,D\ge1$ and $k\in\Z$, define
\begin{equation}\label{eq:algebraic-framed-cycle}
 \check Z_{mD}\!\left(K;\frac{k}{D}\right)
 :=a^{mk}\{mD\}\sum_{A\vdash mD}\chi_A((mD))
 q^{k\kappa_A/D}W_A(K).
\end{equation}
The cycle character is supported on hooks
$A=(mD-b,1^b)$, for which
$\kappa_A=mD(mD-1-2b)$; hence $k\kappa_A/D\in\Z$.
If $k/D$ is an integer, \eqref{eq:algebraic-framed-cycle} is exactly the ordinary framed
skein evaluation of \eqref{eq:ordinary-framed-cycle}.

Assume now $(k,D)=1$ and let $J=C_{D,k}(K)$ be the connected $(D,k)$ cable of a
zero-framed knot $K$, taken in the zero longitude and reset to its own zero framing.  Write
\[
 s_A(x^D)=\sum_Bc^B_{A;D}s_B(x).
\]
In our framing-independent normalization the Rosso--Jones cable formula is given in
\cite[Theorem~13]{MM}; see also the original quantum-group formula \cite{RJ}.
\begin{equation}\label{eq:cable-Schur}
 W_A(J)=q^{-kD\kappa_A}a^{-k(D-1)|A|}
 \sum_Bc^B_{A;D}q^{k\kappa_B/D}W_B(K).
\end{equation}

\begin{lemma}\label{lem:cable-cycle}
For every $m\ge1$,
\begin{equation}\label{eq:cable-cycle}
 \check Z_{mD}\!\left(K;\frac{k}{D}\right)
 =\frac{\{mD\}}{\{m\}}\check Z_m(J;kD).
\end{equation}
\end{lemma}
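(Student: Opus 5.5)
The plan is a direct substitution: insert the Rosso--Jones formula \eqref{eq:cable-Schur} into the definition \eqref{eq:ordinary-framed-cycle} of $\check Z_m(J;kD)$, and then collapse the sum over $A$ with the plethystic identity $p_m(x^D)=p_{mD}(x)$.

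First, since $kD$ is an integer, \eqref{eq:ordinary-framed-cycle} applies to the framed knot $(J,kD)$ and gives
\[
 \check Z_m(J;kD)=a^{mkD}\{m\}\sum_{A\vdash m}\chi_A((m))\,q^{kD\kappa_A}W_A(J).
\]
Substituting \eqref{eq:cable-Schur} for each $W_A(J)$ with $|A|=m$ has two effects:
\begin{itemize}
\item the factor $q^{kD\kappa_A}$ cancels exactly against $q^{-kD\kappa_A}$;
\item the $a$-powers combine as $a^{mkD}a^{-k(D-1)m}=a^{mk}$.
\end{itemize}
This is the point where the framing $kD$ on $J$ is forced, and it is the only bookkeeping requiring care. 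Hence
\[
 \check Z_m(J;kD)=a^{mk}\{m\}\sum_{B\vdash mD}\Bigl(\sum_{A\vdash m}\chi_A((m))\,c^B_{A;D}\Bigr)q^{k\kappa_B/D}W_B(K).
\]

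Second, I would identify the inner coefficient. By the Frobenius formula \eqref{eq:frob},
\[
 \sum_{A\vdash m}\chi_A((m))\,s_A(x^D)=p_m(x^D)=p_{mD}(x)=\sum_{B\vdash mD}\chi_B((mD))\,s_B(x).
\]
Comparing Schur coefficients, using the definition $s_A(x^D)=\sum_B c^B_{A;D}s_B(x)$, gives
\[
 \sum_{A\vdash m}\chi_A((m))\,c^B_{A;D}=\chi_B((mD)).
\]
Therefore
\[
 \check Z_m(J;kD)=a^{mk}\{m\}\sum_{B\vdash mD}\chi_B((mD))\,q^{k\kappa_B/D}W_B(K).
\]
Multiplying by $\{mD\}/\{m\}$ yields precisely the right-hand side of \eqref{eq:algebraic-framed-cycle}, which proves \eqref{eq:cable-cycle}.

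Two consistency points should be noted. Only hooks $B$ survive because $\chi_B((mD))=0$ otherwise, so every exponent $k\kappa_B/D$ is an integer and no fractional powers appear. The identity also needs no hypothesis on $m$ beyond $m\ge1$.

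The main obstacle is not algebraic. It is making sure the cable formula \eqref{eq:cable-Schur} is used in exactly the stated normalization: $J$ is reset to zero framing, $K$ is zero-framed, and the twist eigenvalue convention is \eqref{eq:twist-eigen}. With a different convention the $a$- and $q$-prefactors would not cancel to leave the framing $kD$. Granting \eqref{eq:cable-Schur} as stated, the argument is the two-line computation above.
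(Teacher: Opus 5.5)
Your proposal is correct and is the same argument as the paper's proof. You substitute the Rosso--Jones formula \eqref{eq:cable-Schur} into $\check Z_m(J;kD)$, cancel $q^{kD\kappa_A}$, simplify $a^{mkD}a^{-mk(D-1)}=a^{mk}$, and collapse the $A$-sum with $\sum_{A\vdash m}\chi_A((m))c^B_{A;D}=\chi_B((mD))$, obtained from Frobenius and $p_m(x^D)=p_{mD}(x)$; you simply write out the intermediate displays the paper leaves implicit.
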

\begin{proof}
Frobenius' formula and the plethystic identity $p_m(x^D)=p_{mD}(x)$ give
\[
 \sum_{A\vdash m}\chi_A((m))c^B_{A;D}=\chi_B((mD)).
\]
Substituting \eqref{eq:cable-Schur} into the definition of $\check Z_m(J;kD)$
cancels the factors $q^{kD\kappa_A}$, while the $a$-factor simplifies as
$a^{mkD}a^{-mk(D-1)}=a^{mk}$.  The preceding character identity then yields
\eqref{eq:cable-cycle}.
\end{proof}

\subsection{Connected cable factors}
Let $\mathcal{L}=K_1\cup\cdots\cup K_r$ be zero-individually-framed, let $D_i\ge1$, and put
$D_*=\gcd(D_1,\ldots,D_r)$.  For a fixed $m$ define the connected normalization
\begin{equation}\label{eq:connected-algebraic-normalization}
 H^{\mathrm{ord}}_{(mD_i)}\!\left(\mathcal{L};\frac{k_i}{D_i}\right)
 :=\frac{\{mD_*\}^2}{\prod_{i=1}^r\{mD_i\}^2}
 \check F^{\mathrm{ord}}_{(mD_i)}\!\left(\mathcal{L};\frac{k_i}{D_i}\right).
\end{equation}
If $(k_i,D_i)=1$ for every $i$, cable the $i$th component by $C_{D_i,k_i}$ and let
$J$ be the resulting $r$-component link.  Since \eqref{eq:cable-cycle} is compatible with
deleting labels, it factors out of every block of the moment--cumulant formula and yields
\begin{equation}\label{eq:connected-cable-factor}
 H^{\mathrm{ord}}_{(mD_i)}\!\left(\mathcal{L};\frac{k_i}{D_i}\right)
 =\mathcal P_m(D_1,\ldots,D_r)\,
 H^{\mathrm{ord}}_{(m^r)}(J;k_iD_i),
\end{equation}
where
\begin{equation}\label{eq:cable-prefactor}
 \mathcal P_m(D_1,\ldots,D_r)
 =\frac{\{mD_*\}^2\{m\}^{r-2}}{\prod_{i=1}^r\{mD_i\}}.
\end{equation}
In particular,
\begin{equation}\label{eq:cable-prefactor-Adams}
 \mathcal P_n(D_1,\ldots,D_r)=\Psi_n\mathcal P_1(D_1,\ldots,D_r).
\end{equation}
Moreover $\mathcal P_n$ is a unit at a root with $\ord(\alpha^2)=n$, since each ratio
$\{nD\}/\{n\}$ tends to $D(\alpha^n)^{D-1}\ne0$.

\subsection{Finite dependence on the cable slopes}
\begin{lemma}\label{lem:finite-slope}
Fix $m,D_1,\ldots,D_r$ and the zero-individually-framed labeled link $\mathcal{L}$.  There are a
finite index set $\mathcal I$ and coefficients $R_I(q,a)$, independent of the slopes
$k_1,\ldots,k_r$, such that
\begin{equation}\label{eq:finite-slope}
 H^{\mathrm{ord}}_{(mD_i)}\!\left(\mathcal{L};\frac{k_i}{D_i}\right)
 =a^{m\sum_i k_i}\sum_{I\in\mathcal I}R_I(q,a)
 q^{\sum_i k_i\kappa_{A_i(I)}/D_i}.
\end{equation}
Every exponent $\kappa_{A_i(I)}/D_i$ occurring in a nonzero term is an integer.
\end{lemma}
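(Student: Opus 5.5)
The identity is bookkeeping: expand every moment that enters the connected normalization \eqref{eq:connected-algebraic-normalization} through the definition \eqref{eq:algebraic-framed-cycle}, and then track how the slopes $k_i$ enter.

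\emph{Step 1: expand a single block.} For a block $B\subseteq[r]$ of labels, the moment is built from the colored invariants of the sublink $\mathcal L_B$. Here $\mathcal L$ is zero-individually-framed, so the framing-independent invariants $W_{\vec A}(\mathcal L_B)$ do not depend on the $k_i$. Labelwise, the multi-component analogue of \eqref{eq:algebraic-framed-cycle} reads
\[
 \check Z_B=\prod_{i\in B}a^{mk_i}\{mD_i\}
 \sum_{(A_i)_{i\in B},\,A_i\vdash mD_i}\ \prod_{i\in B}\chi_{A_i}((mD_i))\,
 q^{k_i\kappa_{A_i}/D_i}\,W_{(A_i)_{i\in B}}(\mathcal L_B).
\]
Only hook partitions $A_i$ contribute, since the cycle character vanishes elsewhere. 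For hooks the remark after \eqref{eq:algebraic-framed-cycle} shows that $\kappa_{A_i}=mD_i(mD_i-1-2b_i)$, so $\kappa_{A_i}/D_i\in\Z$. This already settles the integrality assertion of the lemma, and it also shows that the expression is an honest Laurent polynomial in $q$ for every choice of slopes.

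\emph{Step 2: expand the cumulant.} In the moment--cumulant formula \eqref{eq:cumulant}, every term is indexed by a set partition $\pi\in\Pi_r$, and each label $i$ lies in exactly one block of $\pi$. Two consequences follow:
\begin{itemize}
\item the factor $a^{mk_i}$ appears exactly once in every term, so the total prefactor $a^{m\sum_ik_i}$ is common to all terms and can be pulled out;
\item expanding the product over blocks, each term becomes a sum indexed by a set partition $\pi$ together with a choice of hook $A_i\vdash mD_i$ for every $i$.
\end{itemize}
Let $I=(\pi,(A_i)_{i=1}^r)$. The set $\mathcal I$ of such indices is finite, because $\Pi_r$ is finite and there are only $mD_i$ hooks of size $mD_i$. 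The term indexed by $I$ carries the monomial $q^{\sum_ik_i\kappa_{A_i}/D_i}$, and its remaining coefficient is
\[
 R_I=(-1)^{|\pi|-1}(|\pi|-1)!\,\frac{\{mD_*\}^2}{\prod_i\{mD_i\}^2}
 \prod_i\{mD_i\}\chi_{A_i}((mD_i))\prod_{B\in\pi}W_{(A_i)_{i\in B}}(\mathcal L_B).
\]
None of the factors in $R_I$ involves any $k_i$. This gives exactly the form \eqref{eq:finite-slope}.

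\emph{Step 3: check that the normalization stays outside.} The only subtle point is that the prefactor $\{mD_*\}^2/\prod_i\{mD_i\}^2$ depends only on $m$ and the $D_i$, and not on the slopes. It can therefore be absorbed into the $R_I$ without spoiling their independence from $k_1,\ldots,k_r$.

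Nothing in the argument requires $(k_i,D_i)=1$, and the lemma should be stated for all integer slopes. That generality is what lets the later interpolation argument use slopes that do not correspond to actual cables. There is no real obstacle beyond confirming that the $a$-power and the normalization factor out uniformly across all set partitions.
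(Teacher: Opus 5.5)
Your proposal is correct and follows the paper's own argument. Expand each block moment by the finite hook-character sum and the connected coefficient by the set-partition formula, pull out the common factor $a^{m\sum_i k_i}$ (each label lies in exactly one block), absorb the slope-independent normalization into $R_I$, and use $\kappa_{A_i}/D_i=m(mD_i-1-2b_i)\in\Z$ for hooks. One small wording point: the expansion has integer $q$-exponents, but it is not literally a Laurent polynomial in $q$, since the colored invariants $W$ still carry quantum-integer denominators.
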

\begin{proof}
Expand each cycle moment by the finite character sum and the connected coefficient by the
finite set-partition formula.  Every label contributes the common factor $a^{mk_i}$, and
all remaining slope dependence is through $q^{k_i\kappa_{A_i}/D_i}$.  A nonzero
$(mD_i)$-cycle character forces $A_i=(mD_i-b_i,1^{b_i})$, so
\[
 \frac{\kappa_{A_i}}{D_i}=m(mD_i-1-2b_i)\in\Z.
\]
\end{proof}

\begin{lemma}\label{lem:hook-root-phase}
Let $\ord(\alpha^2)=n$ and $\omega=\alpha^n\in\{\pm1\}$.  For a hook
$A=(nD-b,1^b)$ and a hook $B=(D-c,1^c)$,
\begin{align}
 \alpha^{k\kappa_A/D}&=\omega^{k(nD-1)},\label{eq:upper-hook-phase}\\
 \alpha^{nk\kappa_B/D}&=\omega^{k(D-1)},\label{eq:lower-hook-phase}
\end{align}
independently of $b,c$, and
\begin{equation}\label{eq:phase-ratio}
 \omega^{k(nD-1)}=\eps_n^{kD}\omega^{k(D-1)}.
\end{equation}
\end{lemma}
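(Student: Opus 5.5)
The plan is to compute both sides directly from the explicit hook contents and the single fact that $\omega=\alpha^n$ satisfies $\omega^2=\alpha^{2n}=1$, because $\alpha^2$ is a root of unity of order $n$. No skein theory is needed. The only point requiring care is the ratio identity \eqref{eq:phase-ratio}, where the parity of $n$ matters.

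\emph{Step 1 (upper phase).} For the hook $A=(nD-b,1^b)$ the formula already recorded after \eqref{eq:algebraic-framed-cycle} gives $\kappa_A=nD(nD-1-2b)$. Hence $k\kappa_A/D=kn(nD-1-2b)$, and
\[
\alpha^{k\kappa_A/D}=(\alpha^n)^{k(nD-1-2b)}=\omega^{k(nD-1)}\,(\omega^{2})^{-kb}=\omega^{k(nD-1)}.
\]
This proves \eqref{eq:upper-hook-phase}, and the result is independent of $b$.

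\emph{Step 2 (lower phase).} In the same way, for $B=(D-c,1^c)$ we have $\kappa_B=D(D-1-2c)$. So $nk\kappa_B/D=nk(D-1-2c)$, and
\[
\alpha^{nk\kappa_B/D}=\omega^{k(D-1-2c)}=\omega^{k(D-1)}.
\]
This proves \eqref{eq:lower-hook-phase}, independently of $c$.

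\emph{Step 3 (ratio).} Since $\omega=\pm1$, the two phases differ by the factor $\omega^{k(nD-1)-k(D-1)}=\omega^{kD(n-1)}=(\omega^{n-1})^{kD}$. It therefore suffices to show $\omega^{n-1}=\eps_n=(-1)^{n-1}$, and I would split by the parity of $n$.

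If $n$ is odd, then $n-1$ is even, so $\omega^{n-1}=1=\eps_n$.

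If $n$ is even, then $\alpha^2$ is a primitive $n$-th root of unity. Hence $(\alpha^2)^{n/2}=-1$, that is, $\omega=-1$. Since $n-1$ is odd, $\omega^{n-1}=-1=\eps_n$.

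This primitivity observation is the only non-mechanical step. It is exactly where the hypothesis $\ord(\alpha^2)=n$ is used, rather than merely $\alpha^{2n}=1$. Combining the two cases gives \eqref{eq:phase-ratio}.
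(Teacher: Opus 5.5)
Your proposal is correct and follows the paper's proof exactly. Both use the hook contents $\kappa_A=nD(nD-1-2b)$ and $\kappa_B=D(D-1-2c)$, kill the hook labels with $\omega^2=1$, and get $\omega^{n-1}=\eps_n$ from the parity of $n$, with exact squared order forcing $\omega=-1$ when $n$ is even.
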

\begin{proof}
Use $\kappa_A=nD(nD-1-2b)$ and $\kappa_B=D(D-1-2c)$.  Since $\omega^2=1$, the hook
labels disappear.  Finally $\omega^{n-1}=\eps_n$: this is immediate if $n$ is odd, while
for even $n$ exact squared order gives $\omega=-1$.
\end{proof}

\begin{lemma}\label{lem:interpolation}
Let $F$ be a field of characteristic zero and suppose
\[
 f(s;k)=\sum_{\nu=1}^MR_\nu(s)(1+s)^{\Lambda_\nu(k)},\qquad k\in\Z^r,
\]
where the Laurent series $R_\nu(s)\in F((s))$ have a common finite lower bound and the
$\Lambda_\nu$ are integer-valued affine linear forms.  If $\operatorname{ord}_s f(s;k)\ge b$
on a Cartesian product of infinite subsets of $\Z$, then the same inequality holds for all
$k\in\Z^r$.
\end{lemma}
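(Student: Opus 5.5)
We must prove Lemma~\ref{lem:interpolation}. The plan is to reduce to a vanishing statement for exponential polynomials in $k$. Fix $b$ and expand each term coefficientwise in $s$. The factor $(1+s)^{\Lambda}=\sum_{j\ge0}\binom{\Lambda}{j}s^j$ is a power series whose $s^j$-coefficient is a polynomial of degree $j$ in the integer $\Lambda$. Let $-N$ be the common lower bound for the orders of the $R_\nu$. Then for each $j<b$ the coefficient of $s^j$ in $f(s;k)$ is
\[
c_j(k)=\sum_{\nu=1}^M\sum_{i=0}^{j+N}[s^{j-i}]R_\nu(s)\,\binom{\Lambda_\nu(k)}{i}.
\]
This is a finite sum, hence a polynomial in $k\in\Z^r$ with coefficients in $F$. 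The key observation is that no genuine exponentials in $k$ occur, because the base $1+s$ is a unit congruent to $1$ modulo $s$. So $\operatorname{ord}_sf(s;k)\ge b$ is equivalent to the vanishing of the finitely many polynomials $c_j(k)$ with $-N\le j<b$.

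The hypothesis gives $c_j(k)=0$ for all $k\in S_1\times\cdots\times S_r$ with each $S_i\subset\Z$ infinite. A polynomial in $r$ variables over a field of characteristic zero that vanishes on a product of infinite sets is identically zero. The proof is by induction on $r$: view the polynomial as a polynomial in $k_r$ with coefficients in $F[k_1,\dots,k_{r-1}]$. For fixed $(k_1,\dots,k_{r-1})\in S_1\times\cdots\times S_{r-1}$ it has infinitely many roots in $k_r$, so every coefficient vanishes on $S_1\times\cdots\times S_{r-1}$, and the induction hypothesis applies. Hence $c_j\equiv0$ as a polynomial, so $c_j(k)=0$ for all $k\in\Z^r$, which gives $\operatorname{ord}_s f(s;k)\ge b$ everywhere.

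The main point needing care is the polynomiality claim for each coefficient. It requires the common lower bound on the $R_\nu$, so that only finitely many binomial terms contribute to a fixed $s^j$. It also uses that $\binom{\Lambda}{i}$ is a polynomial in $\Lambda$, including for negative $\Lambda$, via the generalized binomial series $(1+s)^{\Lambda}\in F[[s]]$. Characteristic zero is used in two places: to make $\binom{\Lambda}{i}$ a genuine polynomial with the denominator $i!$ invertible, and in the root-counting step. Once these are recorded, the remaining argument is routine.
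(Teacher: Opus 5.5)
Your proof is correct and takes the same approach as the paper. You expand coefficientwise in $s$ and use $[s^i](1+s)^{\Lambda}=\binom{\Lambda}{i}$ to see that each $[s^j]f(s;k)$ with $j<b$ is a polynomial in $k$; then a polynomial over a characteristic-zero field that vanishes on a product of infinite sets is identically zero. Beyond the paper's version, you write out the finite sum that uses the common lower bound, and the induction on $r$ for the vanishing step, both of which the paper leaves implicit.
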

\begin{proof}
For every degree $d<b$, the coefficient $[s^d]f(s;k)$ is a polynomial in the $k_i$ because
$[s^m](1+s)^{\Lambda(k)}=\binom{\Lambda(k)}m$.  A polynomial over a characteristic-zero
field that vanishes on a product of infinite subsets is identically zero.
\end{proof}

\subsection{Arbitrary cycle lengths}
\begin{theorem}\label{thm:universal-slope}
Let $D_1,\ldots,D_r$ be positive integers.  Fix $n\ge2$ and
$\ord(\alpha^2)=n$.  For every integer vector $(k_1,\ldots,k_r)$,
\begin{equation}\label{eq:universal-slope}
 H^{\mathrm{ord}}_{(nD_i)}\!\left(\mathcal{L};\frac{k_i}{D_i}\right)
 \equiv n^{r-1}\eps_n^{\sum_i k_iD_i}
 \Psi_nH^{\mathrm{ord}}_{(D_i)}\!\left(\mathcal{L};\frac{k_i}{D_i}\right)
 \pmod{(q-\alpha)^2}.
\end{equation}
\end{theorem}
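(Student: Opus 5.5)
The plan is to prove \eqref{eq:universal-slope} first on the set of slopes $k_i$ coprime to $D_i$, and then extend to all integer vectors with Lemma~\ref{lem:interpolation}.

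\textbf{Coprime slopes.} Assume $(k_i,D_i)=1$ for every $i$. Cable the $i$th component by $C_{D_i,k_i}$ to get the $r$-component link $J$. By \eqref{eq:connected-cable-factor} with $m=n$,
\[
H^{\mathrm{ord}}_{(nD_i)}\!\left(\mathcal L;\tfrac{k_i}{D_i}\right)=\mathcal P_n(D_1,\ldots,D_r)\,H^{\mathrm{ord}}_{(n^r)}(J;k_iD_i).
\]
Apply Theorem~\ref{thm:equal-cycle} to $J$ with framings $k_iD_i$, so that $\tau(J)=\sum_ik_iD_i$:
\[
H^{\mathrm{ord}}_{(n^r)}(J)\equiv n^{r-1}\eps_n^{\sum_ik_iD_i}\Psi_nH^{\mathrm{ord}}_{(1^r)}(J)\pmod{(q-\alpha)^2}.
\]
Two facts let us multiply this congruence by $\mathcal P_n$ without losing the modulus. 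First, $\mathcal P_n$ is a unit at $\alpha$. Second, $H^{\mathrm{ord}}_{(1^r)}(J)$ lies in $\Z[v,a^{\pm1}]$ by Theorem~\ref{thm:gcd} or Lemma~\ref{lem:ordinary-connected}, so its image under $\Psi_n$ is regular at $\alpha$.

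Next use \eqref{eq:cable-prefactor-Adams}, $\mathcal P_n=\Psi_n\mathcal P_1$, together with \eqref{eq:connected-cable-factor} at $m=1$, which gives $\mathcal P_1H^{\mathrm{ord}}_{(1^r)}(J)=H^{\mathrm{ord}}_{(D_i)}(\mathcal L;k_i/D_i)$. Applying $\Psi_n$ yields exactly the right-hand side of \eqref{eq:universal-slope}. Two points need care here. One must check that the framing of each cable component, reset to zero and then reframed by $k_iD_i$, matches Lemma~\ref{lem:cable-cycle}; this is exactly how \eqref{eq:cable-cycle} is set up. One must also check that $\Psi_n$ commutes with forming the connected normalization, which holds because \eqref{eq:connected-cable-factor} is an identity of rational functions.

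\textbf{Interpolation to all slopes.} Put $q=\alpha(1+s)$ and let
\[
f(s;k)=\text{LHS}-\text{RHS}\quad\text{of \eqref{eq:universal-slope}}.
\]
By Lemma~\ref{lem:finite-slope}, each side is $a^{(\cdot)\sum k_i}$ times a finite sum of coefficients independent of $k$, multiplied by $q^{\sum_ik_iE_i}$ with integer exponents $E_i$. The left side has prefactor $a^{n\sum k_i}$. On the right side $\Psi_n$ turns $a^{\sum k_i}$ into $a^{n\sum k_i}$ as well, so the $a$-prefactors agree. The remaining $q$-powers have the form $\alpha^{\sum k_iE_i}(1+s)^{\sum k_iE_i}$.

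The phases $\alpha^{\sum k_iE_i}$ depend on $k$, and this is the main obstacle: Lemma~\ref{lem:interpolation} requires coefficients $R_\nu(s)$ that do not depend on $k$. Lemma~\ref{lem:hook-root-phase} resolves it. On the left every phase equals $\omega^{\sum k_i(nD_i-1)}$, independent of the hook labels. On the right every phase equals $\omega^{\sum k_i(D_i-1)}$, and multiplying by the sign $\eps_n^{\sum k_iD_i}$ turns it into the same $\omega^{\sum k_i(nD_i-1)}$ by \eqref{eq:phase-ratio}. So after dividing by this common phase and by $a^{n\sum k_i}$, the function $f(s;k)$ has the form required in Lemma~\ref{lem:interpolation}. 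The field is $F=\Q(\alpha)(a)$, and the finitely many $R_\nu$ share a common lower bound in $s$.

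Finally, the congruence established on coprime slopes holds on a Cartesian product of infinite sets. For example, take each $k_i$ in $1+D_i\Z$, which guarantees $(k_i,D_i)=1$. There $\operatorname{ord}_sf\ge2$, so Lemma~\ref{lem:interpolation} extends $\operatorname{ord}_sf\ge2$ to all $k\in\Z^r$. This is \eqref{eq:universal-slope}.
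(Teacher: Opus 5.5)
Your proposal is correct and follows the paper's proof essentially step for step. On coprime slopes you cable, apply Theorem~\ref{thm:equal-cycle} to $J$ with framings $k_iD_i$, and match the prefactors through $\mathcal P_n=\Psi_n\mathcal P_1$, which is a unit at $\alpha$; you then remove the common $a$-monomial and the root phase $\omega^{\sum_i k_i(nD_i-1)}$ via Lemma~\ref{lem:hook-root-phase}, and extend to all slopes with Lemma~\ref{lem:interpolation}, using the explicit grid $\prod_i(1+D_i\Z)$ in place of the paper's nonzero coprime slopes.
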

\begin{proof}
First assume $k_i\ne0$ and $(k_i,D_i)=1$.  Apply
\eqref{eq:connected-cable-factor} with $m=n$ and $m=1$.  By
\eqref{eq:cable-prefactor-Adams}, both sides have the same cable prefactor after applying
$\Psi_n$.  The cabled components have integer framings $k_iD_i$, so
Theorem~\ref{thm:equal-cycle} gives \eqref{eq:universal-slope} on the product of the
coprime slope grids.

Put $q=\alpha(1+s)$.  By Lemma~\ref{lem:finite-slope}, after factoring out the common
$a$-monomial, every upper summand has root phase
$\rho(k)=\omega^{\sum_i k_i(nD_i-1)}$.  Lemma~\ref{lem:hook-root-phase} shows that the
lower root phase, after multiplication by $\eps_n^{\sum_i k_iD_i}$, is the same
$\rho(k)$.  Dividing by these nonzero common factors leaves a finite sum of the form in
Lemma~\ref{lem:interpolation}.  Since the coprime set
$\{k\in\Z\setminus\{0\}:(k,D_i)=1\}$ is infinite for every $i$, interpolation extends the
order-two identity to all integer slopes.
\end{proof}

\begin{proof}[Proof of Theorem~\ref{thm:framed-congruence}]
List all cycle parts of $\vec\nu$ as $D_1,\ldots,D_r$, retaining the original link-component
assignment, and let $\tau_i$ be the framing of the component carrying $D_i$.  In the ordinary framed parallel realization of Lemma~\ref{lem:framed-parallel}, reset the underlying parallel companions to zero individual framing and encode each original integer framing through the algebraic slope parameter.  Thus Theorem~\ref{thm:universal-slope} applies with
\[
 k_i=D_i\tau_i.
\]
Put $S=\sum_i\tau_iD_i$.  Since $D_i^2\equiv D_i\pmod2$, the framing phase in
\eqref{eq:universal-slope} is $\eps_n^S$, and hence
\begin{equation}\label{eq:ordinary-final-comparison}
 H^{\mathrm{ord}}_{n\vec\nu}
 \equiv n^{r-1}\eps_n^S\Psi_nH^{\mathrm{ord}}_{\vec\nu}
 \pmod{(q-\alpha)^2}.
\end{equation}
Lemma~\ref{lem:framed-parallel} gives
\begin{equation}\label{eq:signed-ordinary-relation}
 H^{\vec\tau}_{\vec\nu}=(-a^{-1})^S H^{\mathrm{ord}}_{\vec\nu},
 \qquad
 H^{\vec\tau}_{n\vec\nu}=(-a^{-1})^{nS}H^{\mathrm{ord}}_{n\vec\nu}.
\end{equation}
Finally,
\[
 (-a^{-1})^{nS}\eps_n^S
 =\Psi_n\bigl((-a^{-1})^S\bigr).
\]
Multiplying \eqref{eq:ordinary-final-comparison} by $(-a^{-1})^{nS}$ and using
\eqref{eq:signed-ordinary-relation} proves
\[
 H^{\vec\tau}_{n\vec\nu}
 \equiv n^{r-1}\Psi_nH^{\vec\tau}_{\vec\nu}
 \pmod{(q-\alpha)^2},
\]
which is Theorem~\ref{thm:framed-congruence}.
\end{proof}

\section{Rational polynomiality}\label{sec:rational}

We now combine Theorem~\ref{thm:gcd} with Theorem~\ref{thm:framed-congruence} to prove the following.
\begin{proposition}[Rational polynomiality]\label{prop:rational-polynomiality}
For every nonempty tuple of partitions $\vec A$, we have
\[
 vP^{\vec\tau}_{\vec A}\in\mathbb Q[v,a^{\pm1}].
\]
\end{proposition}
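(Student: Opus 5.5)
Since $vP^{\vec\tau}_{\vec A}=\sum_{\vec\lambda}\chi_{\vec A}(\vec\lambda)\,vC^{\vec\tau}_{\vec\lambda}$ is a finite sum with integer coefficients, it suffices to show $vC^{\vec\tau}_{\vec\lambda}\in\Q[v,a^{\pm1}]$ for each nonempty $\vec\lambda$. By \eqref{eq:key-normalization},
\[
 vC^{\vec\tau}_{\vec\lambda}=\frac{N_{\vec\lambda}}{\mathfrak z_{\vec\lambda}\,\Uq{D}},\qquad
 N_{\vec\lambda}:=\sum_{k\mid D}\mu(k)k^{r-1}\Psi_kH^{\vec\tau}_{\vec\lambda/k}.
\]
By Theorem~\ref{thm:gcd}, each $H^{\vec\tau}_{\vec\lambda/k}$ lies in $\Z[v,a^{\pm1}]$. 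Since $\Psi_k$ sends $v=z^2$ to $\{k\}^2=v\,U_k(v)\in\Z[v]$, we also have $N_{\vec\lambda}\in\Z[v,a^{\pm1}]$. The task therefore reduces to showing that the monic polynomial $\Uq{D}=\{D\}^2/v$ divides $N_{\vec\lambda}$ in $\Q[v,a^{\pm1}]$. If $D=1$ there is nothing to prove, since $U_1=1$.

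As a Laurent polynomial in $q$, $\{D\}^2/z^2$ has double zeros exactly at the roots $\alpha$ with $\alpha^{2D}=1$ and $\alpha^2\ne1$, that is, at the $\alpha$ with $\ord(\alpha^2)=n$ for some divisor $n\ge2$ of $D$. Each factor $(q-\alpha)(q-\alpha^{-1})$ with $\alpha\ne\pm\alpha^{-1}$ lives in $v$ as a linear factor $v-(\alpha-\alpha^{-1})^2$, and these are simple for $U_D$. The case $\alpha=\pm i$ (so $n=2$) gives $v=-4$ with $(q-\alpha)^2$ corresponding to one factor $v+4$. So it suffices to prove that $N_{\vec\lambda}$, viewed as a Laurent polynomial in $q$, vanishes to order at least $2$ at each such $\alpha$. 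This is checked coefficientwise in $a$ and then descended from $\Q(\alpha)$-coefficients to $\Q$ by Galois invariance and the fact that $U_D$ is monic with rational coefficients. One should double-check that a double zero in $q$ at $\alpha$ and $\alpha^{-1}$ yields exactly the required multiplicity of $U_D$'s factor in $v$. I expect this to work, since $\{D\}^2/v$ is a polynomial in $v$ whose roots in $v$ correspond to pairs $\{\alpha^{\pm1}\}$ up to sign, but it is a bookkeeping step to verify.

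Fix $n\mid D$, $n\ge2$, and $\alpha$ with $\ord(\alpha^2)=n$. The key step is to pair the Möbius terms. Write the divisors of $D$ as $k$ with $n\nmid k$ together with $k=nk'$. Because $\mu(nk')=0$ unless $\gcd(n,k')=1$ and $n$ is squarefree, I would first take $n=p$ prime. Then $\mu(pk')=-\mu(k')$ for $p\nmid k'$, and the sum splits as
\[
 N_{\vec\lambda}=\sum_{k\mid D/p^{e},\,\ldots}\mu(k)k^{r-1}\Psi_k\Bigl(H^{\vec\tau}_{\vec\lambda/k}-p^{r-1}\Psi_pH^{\vec\tau}_{\vec\lambda/(pk)}\Bigr),
\]
where $k$ runs over divisors with $p\nmid k$; the remaining terms have $p^2\mid k$ and $\mu(k)=0$. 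Applying Theorem~\ref{thm:framed-congruence} with $\vec\nu=\vec\lambda/(pk)$ and the root $\alpha^k$ gives that the bracket lies in $(q-\alpha^{k})^2\mathcal O_{\alpha^k}$. Here $\ord(\alpha^{2k})=p$ because $p\nmid k$. Applying $\Psi_k$, i.e. $q\mapsto q^k$, then gives order $\ge2$ at $q=\alpha$.

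For general $n$ (not prime), the plan is to choose a prime $p\mid n$ and use $\alpha'=\alpha^{n/p}$. This is the main obstacle. The pairing only produces vanishing at roots of order dividing $p$, but we need vanishing at every $\alpha$ of order $n$. The remedy is to pair with $n$ itself instead. Split the divisors $k$ of $D$ by $g=\gcd(k,n)$. For each $k$ with $\gcd(k,n)=g$ and $n\nmid k$, the point $q^k$ at $q=\alpha$ has $\ord(\alpha^{2k})=n/g\ge2$. Pair $k$ with $kn/g\cdot(\text{appropriate})$ using Theorem~\ref{thm:framed-congruence} with multiplier $n/g$ at the root $\alpha^k$, and check that the Möbius and $k^{r-1}$ weights match through $\mu(k\,n/g)(kn/g)^{r-1}$ against $\mu(k)k^{r-1}(n/g)^{r-1}$. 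This requires only the multiplicativity of $\mu$ and that terms with $\mu=0$ drop out. I would organize this as an induction on the number of prime factors, or more simply by grouping divisors into chains $k\mapsto kp$ for a single prime $p\mid n/g$ chosen per orbit, and verify carefully that every nonvanishing Möbius term is paired exactly once.

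Once the double zeros at all such $\alpha$ are established, $\Uq{D}\mid N_{\vec\lambda}$ in $\Q[v,a^{\pm1}]$. Dividing by the integer $\mathfrak z_{\vec\lambda}$ then gives $vC^{\vec\tau}_{\vec\lambda}\in\Q[v,a^{\pm1}]$, and summing over $\vec\lambda$ gives the claim for $vP^{\vec\tau}_{\vec A}$.
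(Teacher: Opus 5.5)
Your reduction is sound and matches the paper: $N_{\vec\lambda}\in\Z[v,a^{\pm1}]$ by Theorem~\ref{thm:gcd}, and $U_D\mid N_{\vec\lambda}$ follows from second-order vanishing in $q$ at every $\alpha$ with $\ord(\alpha^2)=n$, $n\mid D$, $n\ge2$. There is one slip in that bookkeeping. For $\alpha^2\ne\pm1$ the value $(\alpha-\alpha^{-1})^2$ is a \emph{double} root of $U_D$, not a simple one. The reduction still works because $q\mapsto v$ is unramified there, so a double zero in $q$ gives $(v-(\alpha-\alpha^{-1})^2)^2$; at $\alpha=\pm i$ it gives the single factor $v+4$. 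Your prime case $n=p$ is also correct.

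The gap is the composite case, which is where the content of the proposition lies, and the mechanism you describe there does not work. Matching the $k$-term against the term indexed by $kn/g=\operatorname{lcm}(k,n)$ requires $\mu(\operatorname{lcm}(k,n))=-\mu(k)$, and this is false in general. For $n=4$, $k=1$, the partner $4$ has $\mu(4)=0$. For $n=D=6$, the terms $k=1,2,3$ all have the same partner $6$, and $\mu(6)=\mu(1)$, so the weights add rather than cancel. Nor can you pair $k$ with $pk$ directly by Theorem~\ref{thm:framed-congruence} with multiplier $p$. That needs $\ord(\alpha^{2k})=p$, whereas $\ord(\alpha^{2k})=n/\gcd(n,k)$, which is already $p^2$ for $n=p^2$ and $p\nmid k$.

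The missing idea is to compare each M\"obius term with a common reference term, which in general is \emph{not} itself a weighted term of the sum. Cancellation then comes from a full M\"obius sum over each fibre, not from a pairing. Write each squarefree $k\mid D$ uniquely as $k=d\ell$ with $d\mid\operatorname{rad}(n)$ and $(\ell,n)=1$. Then $n\ell\mid D$, and the root $\beta=\alpha^{d\ell}$ has $\ord(\beta^2)=n/d$. Apply Theorem~\ref{thm:framed-congruence} to $\vec\nu=\vec\lambda/(n\ell)$ with multiplier $n/d$, then substitute $q\mapsto q^{d\ell}$, which preserves order-two vanishing at $\alpha$. This gives
\[
 \Psi_{d\ell}H^{\vec\tau}_{\vec\lambda/(d\ell)}\equiv\Bigl(\frac nd\Bigr)^{r-1}\Psi_{n\ell}H^{\vec\tau}_{\vec\lambda/(n\ell)}\pmod{(q-\alpha)^2},
\]
trivially when $d=n$. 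Hence the $k=d\ell$ term is congruent to $\mu(d)\mu(\ell)(n\ell)^{r-1}\Psi_{n\ell}H^{\vec\tau}_{\vec\lambda/(n\ell)}$. For each fixed $\ell$ the sum over $d$ vanishes because $\sum_{d\mid\operatorname{rad}(n)}\mu(d)=0$; this is the paper's Lemma~\ref{lem:Mobius-term-reduction}.

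Your single-prime idea can be salvaged in the same spirit. Fix one prime $p\mid n$ and pair $k\leftrightarrow pk$ for $p\nmid k$. Route both terms through the reference $\Psi_{L}H^{\vec\tau}_{\vec\lambda/L}$ with $L=\operatorname{lcm}(k,n)=\operatorname{lcm}(pk,n)$. Since $\gcd(pk,n)=p\gcd(k,n)$, the two weights become $\pm\mu(k)k^{r-1}(n/\gcd(k,n))^{r-1}$ and cancel. Neither argument appears in your proposal, so the composite case remains unproved.
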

\begin{proof}
Fix a nonempty partition tuple $\vec\lambda$ with $D=D_{\vec\lambda}$ and $r=\ell(\vec\lambda)$.  By Theorem~\ref{thm:gcd}, every $H^{\vec\tau}_{\vec\rho}$ belongs to $\mathbb Z[v,a^{\pm1}]$.  By \eqref{eq:intro-key-normalization}, it is enough to prove
\begin{equation}\label{eq:mobius-div}
 U_D(v)\ \Bigm|\
 \sum_{k\mid D}\mu(k)k^{r-1}\Psi_kH^{\vec\tau}_{\vec\lambda/k}
 \qquad\text{in }\mathbb Q[v,a^{\pm1}].
\end{equation}

Fix $e\mid D$ ($e>1$) with $\operatorname{rad}(e)=\prod_{p\mid e}p$, and let $\alpha$ satisfy $\ord(\alpha^2)=e$.  Only squarefree divisors contribute to the sum in \eqref{eq:mobius-div} since $\mu(k)=0$ unless $k$ is squarefree.  Write such a divisor uniquely as
\[
 k=d\ell,\qquad d\mid\operatorname{rad}(e),\qquad (\ell,e)=1.
\]

By the following Lemma \ref{lem:Mobius-term-reduction}, which is an easy consequence of Theorem~\ref{thm:framed-congruence}, we obtain 

\[
 \Psi_{d\ell}H^{\vec\tau}_{\vec\lambda/(d\ell)}
 \equiv (e/d)^{r-1}\Psi_{e\ell}H^{\vec\tau}_{\vec\lambda/(e\ell)}
 \pmod{(q-\alpha)^2}.
\]

 Multiplication by $\mu(d \ell)(d\ell)^{r-1}$ gives
\[
\begin{aligned}
&
\mu(d\ell)
(d\ell)^{r-1}
\Psi_{d\ell}
H_{\vec\lambda/(d\ell)}^{\vec\tau}
\\
&\qquad\equiv
\mu(d)\mu(\ell)
(d\ell)^{r-1}
\left(\frac{e}{d}\right)^{r-1}
\Psi_{e\ell}
H_{\vec\lambda/(e\ell)}^{\vec\tau}
\\
&\qquad=
\mu(d)\,
\mu(\ell)
(e\ell)^{r-1}
\Psi_{e\ell}
H_{\vec\lambda/(e\ell)}^{\vec\tau}
\pmod{(q-\alpha)^2}.
\end{aligned}
\]
Therefore, we have 
\[
\begin{aligned}
&
\sum_{d\mid\operatorname{rad}(e)}
\mu(d\ell)
(d\ell)^{r-1}
\Psi_{d\ell}
H_{\vec\lambda/(d\ell)}^{\vec\tau}
\\
&\qquad\equiv
\mu(\ell)
(e\ell)^{r-1}
\Psi_{e\ell}
H_{\vec\lambda/(e\ell)}^{\vec\tau}
\sum_{d\mid\operatorname{rad}(e)}
\mu(d)
\\
&\qquad=0
\pmod{(q-\alpha)^2}
\end{aligned}
\]
since $
 \sum_{d\mid\operatorname{rad}(e)}\mu(d)=0$ for $e>1$.  Thus the sum in \eqref{eq:mobius-div} vanishes to second order at every root of exact squared order $e\mid D$, $e>1$.

We now translate the second-order vanishing in $q$ into polynomial
divisibility in $v=(q-q^{-1})^2.$  For $e>2$, the map $q\mapsto v$ is unramified and therefore the corresponding root in the $v$-variable occurs with multiplicity two. For $e=2$, $v+4=(q+q^{-1})^2$, so second-order vanishing in $q-\alpha$ gives one factor $v+4$.  These are exactly the multiplicities in the monic polynomial $U_D(v)$, we obtain \eqref{eq:mobius-div}.

By formula \eqref{eq:intro-key-normalization}, we have
\[
 vC^{\vec\tau}_{\vec\lambda}\in\mathbb Q[v,a^{\pm1}].
\]
Hence
\[
vP_{\vec A}^{\vec\tau}
=
\sum_{\vec\lambda}
\chi_{\vec A}(\vec\lambda)
vC_{\vec\lambda}^{\vec\tau}
\in
\mathbb Q[v,a^{\pm1}].
\]
\end{proof}

\begin{lemma} \label{lem:Mobius-term-reduction}
Let $\vec\lambda$ be a nonempty partition tuple, put $D=D_{\vec\lambda}$ and $r=\ell(\vec\lambda)$, and let $e>1$ divide $D$. Let $\alpha$ satisfy $\ord(\alpha^2)=e$ and set
\[
k=d\ell,
\qquad
d\mid\operatorname{rad}(e),
\qquad
(\ell,e)=1,\qquad \ell\mid D.
\]
Then
\begin{equation}
\label{eq:Mobius-term-local-comparison}
\Psi_{d\ell}
H_{\vec\lambda/(d\ell)}^{\vec\tau}
\equiv
\left(\frac{e}{d}\right)^{r-1}
\Psi_{e\ell}
H_{\vec\lambda/(e\ell)}^{\vec\tau}
\pmod{(q-\alpha)^2}.
\end{equation}
Note that this formula holds tautologically for $e/d=1$.
\end{lemma}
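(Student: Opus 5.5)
The plan is to apply Theorem~\ref{thm:framed-congruence} once, with the multiplier $n=e/d$, at a root of exact squared order $e/d$ in a rescaled variable, and then apply the Adams operation $\Psi_{d\ell}$. Set $n:=e/d$. If $n=1$ the statement is tautological, so assume $n\ge2$. Since $d\mid\operatorname{rad}(e)$, $d\mid e$, and $(\ell,e)=1$ with $d\ell\mid D$ and $e\ell\mid D$, the tuple $\vec\nu:=\vec\lambda/(e\ell)$ is a nonempty partition tuple with $\ell(\vec\nu)=r$. Moreover $\vec\lambda/(d\ell)=n\vec\nu$. Theorem~\ref{thm:framed-congruence}, applied to $\vec\nu$, then gives for every $\beta$ with $\ord(\beta^2)=n$
\[
 H^{\vec\tau}_{n\vec\nu}(q,a)-n^{r-1}\Psi_nH^{\vec\tau}_{\vec\nu}(q,a)\in(q-\beta)^2\mathcal O_\beta[a^{\pm1}].
\]

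Next apply $\Psi_{d\ell}$, i.e.\ substitute $q\mapsto q^{d\ell}$ and $a\mapsto a^{d\ell}$. Since $\Psi_{d\ell}\Psi_n=\Psi_{e\ell}$, the left side becomes exactly the difference of the two sides of \eqref{eq:Mobius-term-local-comparison}. Set $\beta:=\alpha^{d\ell}$. Because $\ord(\alpha^2)=e$ and $(\ell,e)=1$, the order of $\beta^2=(\alpha^2)^{d\ell}$ is $e/\gcd(e,d\ell)=e/d=n$, so $\beta$ is an admissible root for the congruence above.

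It remains to check that the substitution preserves the vanishing order at the root. All quantities involved are Laurent polynomials in $q,a$ (by Theorem~\ref{thm:gcd}, each $H$ lies in $\Z[v,a^{\pm1}]$). So the difference $G(q,a)$ is a Laurent polynomial divisible in $\mathcal O_\beta[a^{\pm1}]$ by $(q-\beta)^2$, meaning every $a$-coefficient vanishes to order at least $2$ at $q=\beta$. Substituting $q\mapsto q^{d\ell}$ gives $q^{d\ell}-\beta=(q-\alpha)\cdot u(q)$ with $u$ regular at $\alpha$; $u$ need not be a unit, but this does not matter. Hence each coefficient of $G(q^{d\ell},a^{d\ell})$ vanishes to order at least $2$ at $q=\alpha$. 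The replacement $a\mapsto a^{d\ell}$ only relabels the $a$-monomials, and the rational coefficients are unaffected.

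The main obstacle I expect is only bookkeeping. One must verify the arithmetic $\gcd(e,d\ell)=d$ carefully, which uses both $d\mid e$ and $(\ell,e)=1$. One must also make sure that Theorem~\ref{thm:framed-congruence} is stated for all roots of exact squared order $n$, not merely for primitive ones in some fixed normalization. Beyond this the argument is a direct application of Theorem~\ref{thm:framed-congruence} followed by the Adams operation.
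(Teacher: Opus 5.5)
Your proposal is correct and follows the paper's proof essentially step for step. Like the paper, you compute $\ord((\alpha^{d\ell})^2)=e/\gcd(e,d\ell)=e/d$, apply Theorem~\ref{thm:framed-congruence} to $\vec\nu=\vec\lambda/(e\ell)$ with multiplier $e/d$ at $\beta=\alpha^{d\ell}$, and then substitute $q\mapsto q^{d\ell}$, $a\mapsto a^{d\ell}$. The only difference is cosmetic: the paper notes that $q^{d\ell}-\beta$ is a unit multiple of $q-\alpha$, since its derivative $d\ell\,\alpha^{d\ell-1}$ at $q=\alpha$ is nonzero, whereas you observe that regularity of the cofactor already suffices for the direction needed.
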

\begin{proof}
Since $
\operatorname{ord}(\alpha^2)=e
$
and
$
(\ell,e)=1,
$
we have
\[
\begin{aligned}
\operatorname{ord}
\left(
(\alpha^{d\ell})^2
\right)
&=
\operatorname{ord}
\left(
(\alpha^2)^{d\ell}
\right)
\\
&=
\frac{e}{\gcd(e,d\ell)}
\\
&=
\frac{e}{d}.
\end{aligned}
\]

For the case
$
\frac{e}{d}>1, 
$
we set $
m:=\frac{e}{d}$
and
$
\vec\nu
:=
\frac{\vec\lambda}{e\ell}.
$
This is well defined since
$
e\mid D$, $\ell\mid D$ and $
(e,\ell)=1
$
imply that
$
e\ell\mid D.
$
Moreover, we have
$
m\vec\nu
=
\frac{e}{d}
\frac{\vec\lambda}{e\ell}
=
\frac{\vec\lambda}{d\ell}.
$

Applying Theorem~\ref{thm:framed-congruence} with variables $(t,b)$, root
$
\beta=\alpha^{d\ell},
$
cycle tuple $\vec\nu$, and scaling factor $m=e/d$, using
\[
\operatorname{ord}(\beta^2)=m,
\]
we obtain
\[
H_{\vec\lambda/(d\ell)}^{\vec\tau}(t,b)
-
\left(\frac{e}{d}\right)^{r-1}
H_{\vec\lambda/(e\ell)}^{\vec\tau}
\left(
t^{e/d},b^{e/d}
\right)
\in
(t-\beta)^2
\mathcal O_\beta[b^{\pm1}].
\]

Now substitute
$
t=q^{d\ell},\qquad b=a^{d\ell}.
$
Then
$
t-\beta
=
q^{d\ell}-\alpha^{d\ell}
=
d\ell\,\alpha^{d\ell-1}(q-\alpha)
+
O((q-\alpha)^2).
$
Since the coefficient
$
d\ell\,\alpha^{d\ell-1}
$
is nonzero. Therefore,
$
(t-\beta)^2
$
is a unit multiple of
$
(q-\alpha)^2
$
in the local ring at $q=\alpha$. Hence, the substituted identity is exactly
\[
\Psi_{d\ell}
H_{\vec\lambda/(d\ell)}^{\vec\tau}
\equiv
\left(\frac{e}{d}\right)^{r-1}
\Psi_{e\ell}
H_{\vec\lambda/(e\ell)}^{\vec\tau}
\pmod{(q-\alpha)^2}.
\]

For the case $e/d=1$, both sides are identical, so the same formula holds
tautologically.
\end{proof}

\section{Arithmetic integrality and completion of the proof}\label{sec:arithmetic}

Section~\ref{sec:rational} proves that
$
vP_{\vec A}^{\vec\tau}
\in
\mathbb Q[v,a^{\pm1}].
$
Thus all nontrivial cyclotomic denominators have already been removed.
The remaining problem is to show that the rational
coefficients are in fact integers. We employ some arithmetic tools to accomplish this.  Note that the argument of this section is independent of  Sections~\ref{sec:hecke},\ref{sec:exterior}, \ref{sec:cable}.  

\subsection{An quantum-integer localization ring}
Put
\begin{equation}\label{eq:R0}
 R_0=\Z[q^{\pm1},a^{\pm1},\{m\}^{-1}:m\ge1]
 =\Z[q^{\pm1},a^{\pm1},\{m\}^{-2}:m\ge1].
\end{equation}
The equality follows from $\{m\}^{-1}=\{m\}\{m\}^{-2}$.  By the definition of the colored HOMFLY--PT invariants, it is easy to see that 
\begin{equation}
\label{eq:framed-colored-invariants-R0}
W_{\vec A}^{\vec\tau}(\mathcal L;q,a)\in R_0.
\end{equation}

Moreover, we have 
\begin{lemma}\label{lem:invariant-localization}
Let $G$ be the group generated by the involutions $q\mapsto -q$ and $q\mapsto q^{-1}$, both fixing $a$.  Then
\begin{equation}\label{eq:invariant-localization}
 R_0^G
 =\Z[v,a^{\pm1},\{m\}^{-2}:m\ge1]
 =\Z[v,a^{\pm1},(vU_m(v))^{-1}:m\ge1].
\end{equation}
\end{lemma}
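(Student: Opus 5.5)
The plan is to separate the ring structure in $q$ from the localizing elements. First note that $\{m\}^2=v\,U_m(v)$ with $U_m\in\Z[v]$ monic. Hence $\{m\}^2$ and its inverse are $G$-invariant elements of $R_0$, because $\{m\}\mapsto(-1)^m\{m\}$ under $q\mapsto -q$ and $\{m\}\mapsto -\{m\}$ under $q\mapsto q^{-1}$. This gives the inclusion $\supseteq$ for the first ring, and the second equality is then immediate. The real content is the reverse inclusion
\[
R_0^G\subseteq \Z[v,a^{\pm1},\{m\}^{-2}:m\ge1].
\]

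For this I will write an arbitrary $x\in R_0$ as $x=y/S$. Here $y\in\Z[q^{\pm1},a^{\pm1}]$ and $S=\prod_{m\in T}\{m\}^{2N}$ for a finite set $T$ and a large $N$. The denominator $S$ is $G$-invariant, so $x$ is $G$-invariant if and only if $y$ is. It then suffices to identify the invariant subring
\[
\Z[q^{\pm1},a^{\pm1}]^G=\Z[v,a^{\pm1}].
\]
Since $a$ is fixed, I work coefficientwise in $a$ and reduce to the claim $\Z[q^{\pm1}]^G=\Z[v]$.

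To prove that claim, I first handle the subgroup generated by $q\mapsto -q$. Its invariants in $\Z[q^{\pm1}]$ are exactly the even Laurent polynomials, i.e.\ $\Z[t^{\pm1}]$ with $t=q^2$. The involution $q\mapsto q^{-1}$ acts as $t\mapsto t^{-1}$. The invariants of $\Z[t^{\pm1}]$ under $t\mapsto t^{-1}$ are spanned over $\Z$ by the elements $t^j+t^{-j}$ for $j\ge1$, together with $1$. I will then show by induction on $j$ that $t^j+t^{-j}\in\Z[w]$, where $w=t+t^{-1}$. The recursion
\[
t^{j+1}+t^{-j-1}=w\,(t^j+t^{-j})-(t^{j-1}+t^{1-j})
\]
does this with integer coefficients. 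Finally $w=q^2+q^{-2}=v+2$, so $\Z[w]=\Z[v]$, which gives the claim.

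The one step I expect to need care is the integrality over $\Z$ rather than over $\Q$. The symmetrization argument must never divide by $2$. I will avoid averaging entirely and use the monomial-basis argument above: an invariant Laurent polynomial has equal coefficients on $t^j$ and $t^{-j}$, so it is an integer combination of the elements $t^j+t^{-j}$.

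I will also check that the representation $x=y/S$ is legitimate. $R_0$ is a localization of the domain $\Z[q^{\pm1},a^{\pm1}]$ at the multiplicative set generated by the $\{m\}$. Multiplying the numerator and denominator by a suitable power of the $\{m\}$ makes the denominator a product of squares, as the display \eqref{eq:R0} already indicates.
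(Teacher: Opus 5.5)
Your proposal is correct and follows essentially the same route as the paper. Both arguments write elements of $R_0$ as fractions whose $G$-invariant denominators are products of the $\{m\}^2$, use that $\Z[q^{\pm1},a^{\pm1}]$ is a domain to reduce invariance to the numerator, and identify the invariant ring as $\Z[v,a^{\pm1}]$ via even Laurent polynomials and $q^2+q^{-2}=v+2$; your explicit recursion for $t^j+t^{-j}$ just supplies the integrality detail the paper states without proof.
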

\begin{proof}
Put $A=\Z[q^{\pm1},a^{\pm1}]$ and let $S$ be the multiplicative set generated by the invariant elements $\{m\}^2$.  Then $R_0=S^{-1}A$.  Invariance under $q\mapsto -q$ first gives $\Z[q^{\pm2},a^{\pm1}]$, and invariance there under $q\mapsto q^{-1}$ gives
\[
 A^G=\Z[q^2+q^{-2},a^{\pm1}]=\Z[v,a^{\pm1}],
\]
since $q^2+q^{-2}=v+2$.  Because every element of $S$ is fixed by $G$, an invariant fraction $f/s$ with $s\in S$ has invariant numerator: $g(f)/s=f/s$ implies $g(f)=f$ in the domain $A$.  Hence
\[
 (S^{-1}A)^G=S^{-1}(A^G).
\]
Finally $\{m\}^2=vU_m(v)$, which gives \eqref{eq:invariant-localization}.
\end{proof}

\subsection{A Frobenius lift in the \texorpdfstring{$p$}{p}-local completion}
\begin{lemma}\label{lem:frobenius-lift}
Fix a prime $p$ and put $R_{0,(p)}=R_0\otimes_\Z\Z_{(p)}$.
Let $\Lambda$ be the integral symmetric-function ring, and let
$(\Lambda^{\otimes L})_d$ denote its part of total degree $d$.
Use the graded completion
\[
 A_p=\prod_{d\ge0}
          \bigl(R_{0,(p)}\otimes_\Z(\Lambda^{\otimes L})_d\bigr).
\]
Then $A_p$ is torsion free and complete for the filtration by total
degree. The Adams operation
\[
 \Psi_p(q)=q^p,\qquad
 \Psi_p(a)=a^p,\qquad
 \Psi_p\bigl(p_m(x^c)\bigr)=p_{pm}(x^c)
\]
extends continuously to $A_p$ and satisfies
\begin{equation}\label{eq:frobenius-lift}
 \Psi_p(F)\equiv F^p\pmod{pA_p}\qquad(F\in A_p).
\end{equation}
The same statement holds after the divided-alphabet substitution
$x\mapsto Y$.
\end{lemma}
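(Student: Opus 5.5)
The plan is to reduce to the finite-degree pieces, prove the congruence on a set of topological generators of $A_p$, and then propagate it to all of $A_p$.

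\emph{Torsion freeness and completeness.} Torsion freeness comes first. $R_0$ is a localization of the domain $\Z[q^{\pm1},a^{\pm1}]$, so it is torsion free. Hence $R_{0,(p)}$ is torsion free. Each $(\Lambda^{\otimes L})_d$ is a free $\Z$-module with finite basis, for instance the products of power sums, or of monomial symmetric functions. So every graded piece $R_{0,(p)}\otimes_\Z(\Lambda^{\otimes L})_d$ is a finite free $R_{0,(p)}$-module, and the product $A_p$ is torsion free. Completeness for the degree filtration is automatic, since $A_p$ is the full product of its graded pieces. The ideal $pA_p$ is computed degreewise, i.e. $pA_p=\prod_d p\,(\cdots)_d$, and it is closed.

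\emph{Continuity of $\Psi_p$.} On coefficients, $\Psi_p(q)=q^p$ and $\Psi_p(a)=a^p$ define a ring endomorphism of $\Z[q^{\pm1},a^{\pm1}]$. It sends $\{m\}$ to $\{pm\}$, which is invertible in $R_0$, so the endomorphism extends to $R_0$ and then to $R_{0,(p)}$. On $\Lambda^{\otimes L}$, the rule $p_m(x^c)\mapsto p_{pm}(x^c)$ is the usual Adams operation. It is the plethysm $f\mapsto f[(x^c)^p]$, i.e. $f(x_1,x_2,\dots)\mapsto f(x_1^p,x_2^p,\dots)$, so it preserves the integral lattice $\Lambda$ and multiplies degree by $p$. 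Consequently the tensor product map sends degree $d$ into degree $pd$. It therefore extends continuously and multiplicatively to $A_p$, by acting degreewise on infinite sums.

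\emph{The congruence on generators.} Next I would show that the set $\{F\in A_p:\Psi_p(F)\equiv F^p \pmod{pA_p}\}$ contains a topologically generating set and is closed under sums, products and limits. Closure under products is immediate from multiplicativity. For sums, $(F+G)^p\equiv F^p+G^p \pmod p$ because the binomial coefficients $\binom pi$ with $0<i<p$ are divisible by $p$. For generators:
\begin{itemize}
\item $q$ and $a$: $\Psi_p(q)=q^p$ exactly, and similarly for $a$.
\item Inverses: if $\Psi_p(s)\equiv s^p$ and $s$ is a unit, then $\Psi_p(s^{-1})\equiv s^{-p}$. Indeed, multiply $\Psi_p(s)-s^p\in pA_p$ by the unit $\Psi_p(s)^{-1}s^{-p}$. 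This handles $\{m\}^{-1}$, since $\{m\}$ is a polynomial in $q^{\pm1}$ and so already satisfies the congruence.
\item Integers in $\Z_{(p)}$: these satisfy $c^p\equiv c$ by Fermat's little theorem, extended to $\Z_{(p)}$.
\item The symmetric-function side: I would use the monomial variables $x_i^c$. Work in finitely many variables, then pass to the inverse limit, since the statement is degreewise. In that setting $\Lambda_N$ sits inside the polynomial ring $\Z[x_1,\dots,x_N]$, where $\Psi_p$ is the Frobenius lift $x_i\mapsto x_i^p$. Hence $\Psi_p(f)\equiv f^p \pmod{p\Z[x]}$ holds for every polynomial $f$ by the sum and product closure. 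The congruence then descends to $\Lambda_N$ because $p\Z[x]\cap\Lambda_N=p\Lambda_N$, which holds since $\Lambda_N$ is a direct summand of $\Z[x]$, spanned by the monomial-orbit sums.
\end{itemize}

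\emph{Passage to infinite sums.} This is the step I expect to require the most care. A general $F\in A_p$ is an infinite sum $\sum_d F_d$, and $F^p$ in a fixed degree involves only finitely many $F_d$. So the congruence in each degree reduces to a finite computation, and $pA_p$ is closed. The residual issue is that the coefficients of $F_d$ are arbitrary elements of $R_{0,(p)}$ tensored with $\Lambda$, not single generators. Writing $F_d$ as a finite sum of products (coefficient)$\times$(basis element) and using the sum and product closure handles this.

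\emph{The divided alphabet.} Finally, the substitution $p_m(Y^c)=p_m(y^c)/\{m\}$ is compatible with $\Psi_p$, because $\Psi_p\{m\}=\{pm\}$ matches $p_m\mapsto p_{pm}$. Thus $x\mapsto Y$ commutes with $\Psi_p$, and the same argument applies verbatim with $y$ in place of $x$.
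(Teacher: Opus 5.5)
Your proof of the main congruence is correct and follows the paper's route. You check $q^{\pm1}$, $a^{\pm1}$, the inverses $\{m\}^{-1}$ by the unit trick, and the symmetric-function side in finitely many variables, then extend degreewise to $A_p$. Your descent $p\Z[x]\cap\Lambda_N=p\Lambda_N$ makes explicit a step the paper only gestures at. One slip: products of power sums are \emph{not} a $\Z$-basis of $(\Lambda^{\otimes L})_d$. They span a sublattice of finite index; for example $h_2=(p_1^2+p_2)/2$ is not in their $\Z$-span. Only the monomial (or Schur) basis works there.

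That slip is exactly where the gap lies, in the divided-alphabet step. Compatibility of $x\mapsto Y$ with $\Psi_p$ is not the point. The content of that sentence is that $x\mapsto Y$ carries the integral lattice $R_{0,(p)}\otimes\Lambda^{\otimes L}$ into the integral lattice in the $y$-variables, so that it is an endomorphism of $A_p$ at all. Because $\Lambda$ is not generated over $\Z$ by power sums, the rule $p_m\mapsto p_m(y)/\{m\}$ sends, for instance, $h_j=\sum_{\lambda\vdash j}p_\lambda/\mathfrak z_\lambda$ to $\sum_{\lambda\vdash j}p_\lambda(y)/(\mathfrak z_\lambda\{\lambda\})$. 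This visibly carries the integer denominators $\mathfrak z_\lambda$. Until you show these cancel, $F[Y]$ need not lie in $A_p$, and the congruence modulo $pA_p$ is not even meaningful.

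Saying ``the same argument applies verbatim with $y$ in place of $x$'' only transports the structure to the lattice spanned by the $s_A[Y]$. That is not the lattice in which the $P^{\vec\tau}_{\vec A}$ are the Schur coefficients, and the latter is how the lemma is used in Proposition~\ref{prop:localization}.

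The missing step, which the paper's proof supplies, goes as follows. Plethystically $Y=y\cdot X$ with $X=(q^{-1},q^{-3},q^{-5},\ldots)$, since $p_m(X)=q^{-m}/(1-q^{-2m})=1/\{m\}$. By the Cauchy identity, $h_j[Y]=\sum_{\mu\vdash j}s_\mu(y)s_\mu(X)$. Principal specialization gives $s_\mu(X)=q^{-|\mu|-2n(\mu)}\prod_{\square\in\mu}(1-q^{-2h(\square)})^{-1}$ with $n(\mu)=\sum_i(i-1)\mu_i$, and $1-q^{-2h}=q^{-h}\{h\}$ is a unit of $R_0$. Hence $h_j[Y]\in R_0\otimes\Lambda$, componentwise in each alphabet $y^c$. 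The substitution is therefore a degree-preserving (so continuous) endomorphism of $A_p$ commuting with $\Psi_p$. The congruence for $F[Y]$ is then simply the main statement applied to the element $F[Y]\in A_p$.
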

\begin{proof}
Each homogeneous part of $\Lambda^{\otimes L}$ is a finite free
abelian group, and $R_{0,(p)}$ is torsion free. The displayed product
is therefore torsion free and complete.

It is enough to check the Frobenius congruence on integral algebra
generators before passing to the graded completion.
It is immediate for $q^{\pm1}$ and $a^{\pm1}$. Since
\[
 \Psi_p(\{m\})=\{mp\}\equiv\{m\}^p\pmod p
\]
and every $\{m\}$ is a unit in $R_0$, inversion gives
\[
 \Psi_p(\{m\}^{-1})\equiv\{m\}^{-p}\pmod p.
\]
In the integral symmetric-function ring, the freshman's-dream
identity in the underlying variables gives
\[
 p_{pm}(x^c)\equiv p_m(x^c)^p\pmod p.
\]
To check integral generators, apply the same identity to the
complete symmetric functions $h_j$, which generate $\Lambda$
over $\Z$:
\[
 \Psi_p(h_j(x^c))
   =h_j\bigl((x_1^c)^p,(x_2^c)^p,\ldots\bigr)
       \equiv h_j(x^c)^p\pmod p.
\]
These identities may first be checked in finitely many variables
and then passed to the symmetric-function ring.
Multiplicativity proves the congruence on
$R_{0,(p)}\otimes\Lambda^{\otimes L}$. Since Adams multiplies
degree by $p$, continuity gives \eqref{eq:frobenius-lift} on $A_p$.

 Finally, each Schur coefficient after $x\mapsto Y$ has only quantum-integer denominators, hence belongs to $R_0$; therefore the substitution defines a continuous endomorphism into the same completed ring.
\end{proof}

\subsection{A Dwork-type lemma}
\begin{lemma}\label{lem:dwork}
Fix a prime $p$. Let $B$ be a commutative torsion-free $\Z_{(p)}$-algebra, and
let $\varphi_p:B\to B$ be an endomorphism satisfying
$\varphi_p(b)\equiv b^p\pmod{pB}$ for every $b\in B$.
Put $A=B[[u_1,\ldots,u_L]]$ and $I=(u_1,\ldots,u_L)$.
Extend $\varphi_p$ continuously to $A$ by
\[
 \psi_p\left(\sum_{\mathbf d}b_{\mathbf d}\mathbf u^{\mathbf d}\right)
       =\sum_{\mathbf d}\varphi_p(b_{\mathbf d})
                                      \mathbf u^{p\mathbf d}.
\]
For $Z\in1+I$, put
\[
 D_p(Z)=\log Z-\frac1p\psi_p(\log Z).
\]
Then $D_p(Z)\in A$. If $\{\varphi_k\}_{k\ge1}$ is a compatible
system of Adams operations on $B$, extended to $\psi_k$ by
$u_c\mapsto u_c^k$, then
\begin{equation}\label{eq:dwork-pairing}
 \sum_{k\ge1}\frac{\mu(k)}k\psi_k(\log Z)
  =\sum_{\substack{k\ge1\\(k,p)=1}}\frac{\mu(k)}k
                                      \psi_k\bigl(D_p(Z)\bigr).
\end{equation}
The sums are interpreted coefficientwise in total $\mathbf u$-degree.
\end{lemma}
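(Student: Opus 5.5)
The proof has two parts. First I show that $D_p(Z)$ has coefficients in $A$, which is the usual Dwork integrality argument. Then I derive \eqref{eq:dwork-pairing}, which is a formal rearrangement of the Möbius sum.

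\emph{Integrality of $D_p(Z)$.} Since $Z\in 1+I$ and $B$ is a $\Z_{(p)}$-algebra, the series $\log Z$ only lies in $(B\otimes\Q)[[u]]$, so the point is to see the cancellation of denominators. The first step is to extend the Frobenius congruence from $B$ to $A$: for $F\in A$ we have $\psi_p(F)\equiv F^p\pmod{pA}$. On monomials $b\mathbf u^{\mathbf d}$ this is the hypothesis on $\varphi_p$. For sums we use $(x+y)^p\equiv x^p+y^p\pmod p$ together with continuity in the $I$-adic topology, since both sides are computed degreewise from finitely many terms.

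Next, the standard consequence: if $F\equiv G\pmod{p^jA}$, then $F^p\equiv G^p\pmod{p^{j+1}A}$. Applying this with $F=\psi_p(Z)$ and $G=Z^p$ gives $\psi_p(Z)\equiv Z^p\pmod{pA}$. Both sides lie in $1+I$, so we may write $\psi_p(Z)=Z^p(1+pW)$ with $W\in I$; here torsion-freeness is what allows the division by $p$ in $A$. Taking logarithms,
\[
\psi_p(\log Z)-p\log Z=\log(1+pW)=\sum_{j\ge1}(-1)^{j-1}\frac{p^jW^j}{j},
\]
and each term has coefficients in $B$ because $v_p(p^j/j)\ge1$. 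Here $\psi_p\log=\log\psi_p$ holds since $\psi_p$ is a continuous ring endomorphism of $A\otimes\Q$. So $\psi_p(\log Z)-p\log Z\in pA$. Dividing by $p$, again using torsion-freeness, gives $D_p(Z)=-\frac1p\bigl(\psi_p(\log Z)-p\log Z\bigr)\in A$.

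\emph{The pairing identity.} Compatibility of the Adams operations means $\psi_k\psi_l=\psi_{kl}$. Every $k\ge1$ factors uniquely as $k=k'p^j$ with $(k',p)=1$, and $\mu(k)=0$ once $j\ge2$. The left side of \eqref{eq:dwork-pairing} therefore splits into the terms with $j=0$ and $j=1$:
\[
\sum_{(k',p)=1}\frac{\mu(k')}{k'}\psi_{k'}(\log Z)+\sum_{(k',p)=1}\frac{\mu(k')\mu(p)}{k'p}\psi_{k'}\psi_p(\log Z).
\]
Using $\mu(p)=-1$, this equals $\sum_{(k',p)=1}\frac{\mu(k')}{k'}\psi_{k'}\bigl(\log Z-\frac1p\psi_p\log Z\bigr)$, which is the right side. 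The regrouping is legitimate because $\psi_k$ multiplies total $\mathbf u$-degree by $k$, so in each fixed degree only finitely many $k$ contribute.

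The main obstacle is the integrality step, specifically the lifting $\psi_p(Z)\equiv Z^p\Rightarrow\psi_p(Z)=Z^p(1+pW)$ with $W\in I$, together with the $p$-adic bound $v_p(p^j/j)\ge1$. This is where torsion-freeness of $B$ and the Frobenius hypothesis are both used.
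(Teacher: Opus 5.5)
Your proof is correct and follows the paper's argument: extend the Frobenius congruence to $A$, write $\psi_p(Z)=Z^p(1+pW)$ with $W\in I$ (the paper uses the reciprocal $Z^p/\psi_p(Z)=1+pK$), use $v_p(p^j/j)\ge 1$, and pair the M\"obius terms of index $k$ and $pk$. The only slip is the sentence invoking $F\equiv G\pmod{p^jA}\Rightarrow F^p\equiv G^p\pmod{p^{j+1}A}$, which is neither needed nor correctly applied, since $\psi_p(Z)\equiv Z^p\pmod{pA}$ is just your extended congruence with $F=Z$; also, the logarithms live in $(B\otimes\Q)[[\mathbf u]]$ rather than $A\otimes\Q$.
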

\begin{proof}
The Frobenius congruence gives $\psi_p(Z)\equiv Z^p\pmod{pA}$.
Both terms are units with constant term one. Consequently, there
is $K\in I$ with
\[
 \frac{Z^p}{\psi_p(Z)}=1+pK.
\]
Here each positive-degree coefficient of the quotient minus one
is divisible by $p$, and its constant coefficient is zero; this
gives $K\in I$ coefficientwise.
Using continuity and $\psi_p(\log Z)=\log\psi_p(Z)$,
\[
 D_p(Z)=\frac1p\log(1+pK)
       =\sum_{j\ge1}(-1)^{j-1}\frac{p^{j-1}}jK^j.
\]
Since $\ord_p(j)\le j-1$, every coefficient belongs to
$\Z_{(p)}$. The series converges $I$-adically because $K\in I$.
Thus $D_p(Z)\in A$.

For \eqref{eq:dwork-pairing}, terms with $p^2\mid k$ vanish
because $\mu(k)=0$. Every nonzero M\"obius term whose index is
divisible by $p$ is therefore paired uniquely with the term of
index $k$, where $(k,p)=1$. Since $\mu(pk)=-\mu(k)$ and
$\psi_{pk}=\psi_k\psi_p$,
\begin{align*}
 &\frac{\mu(k)}k\psi_k(\log Z)
       +\frac{\mu(pk)}{pk}\psi_{pk}(\log Z)\\
 &\qquad=
 \frac{\mu(k)}k\psi_k
             \left(\log Z-\frac1p\psi_p(\log Z)\right).
\end{align*}
Summing over $(k,p)=1$ gives the identity. This rearrangement is
legitimate: $\psi_k$ multiplies positive total degree by $k$, so
only finitely many terms contribute to a fixed multidegree.
\end{proof}

\subsection{The two HOMFLY--PT involutions}
\begin{lemma}\label{lem:primitive-symmetry}
Let $Y_q$ denote the divided alphabet, $p_m(Y_q)=p_m(y)/\{m\}_q$.  For every integer framing vector $\vec\tau$,
\begin{equation}\label{eq:partition-symmetry}
 Z^{\vec\tau}[Y_{q}]|_{q\rightarrow -q}=Z^{\vec\tau}[Y_q],
 \qquad
 Z^{\vec\tau}[Y_{q}]|_{q\rightarrow q^{-1}}=Z^{\vec\tau}[Y_q].
\end{equation}
Consequently
\begin{equation}\label{eq:plog-symmetry}
 \operatorname{PLog}Z^{\vec\tau}[Y_q]|_{q\rightarrow -q}
 =\operatorname{PLog}Z^{\vec\tau}[Y_q],
 \qquad
 \operatorname{PLog}Z^{\vec\tau}[Y_q]|_{q\rightarrow q^{-1}}
 =\operatorname{PLog}Z^{\vec\tau}[Y_q],
\end{equation}
and every primitive Schur coefficient $P_{\vec A}^{\vec\tau}$ is fixed by both involutions.
\end{lemma}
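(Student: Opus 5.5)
Since $Z^{\vec\tau}[Y_q]=\exp\bigl(\sum_{\vec\lambda}F^{\vec\tau}_{\vec\lambda}p_{\vec\lambda}(y)/(\mathfrak z_{\vec\lambda}\{\vec\lambda\})\bigr)$, the plan is to check the two symmetries on the power-sum coefficients of $Z^{\vec\tau}[Y_q]$. These are $Z^{\vec\tau}_{\vec\mu}/(\mathfrak z_{\vec\mu}\{\vec\mu\})$, so it suffices to prove that the quotient $Z^{\vec\tau}_{\vec\mu}/\{\vec\mu\}$ is fixed by $q\mapsto -q$ and by $q\mapsto q^{-1}$. The consequences \eqref{eq:plog-symmetry} then follow formally. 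The two involutions commute with $\Psi_k$: indeed $\Psi_k(f)(-q)=f((-q)^k)$ and $(-q)^k=\pm q^k$. In the odd-$k$ case this is the same involution. In the even-$k$ case $\Psi_k$ of an expression already invariant under $q\mapsto -q$ is unchanged. Invariance under $q\mapsto q^{-1}$ commutes with $\Psi_k$ directly. Hence $\operatorname{PLog}$ preserves invariance. Schur coefficients are read off from power-sum coefficients by the $q$-independent Frobenius transform, so every $P^{\vec\tau}_{\vec A}$ is fixed as well.

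\emph{The involution $q\mapsto -q$.} We have $\{m\}_{-q}=(-1)^{m}\{m\}_q$, hence $\{\vec\mu\}\mapsto(-1)^{|\vec\mu|}\{\vec\mu\}$. For $W^{\vec\tau}_{\vec A}$, the framing factors $q^{\tau\kappa_A}$ are unchanged because $\kappa_A$ is even. This holds since $A_i(A_i+1)-2iA_i$ is even. So it remains to show that $W_{\vec A}(-q,a)=(-1)^{|\vec A|}W_{\vec A}(q,a)$. I would prove this using the skein: the substitution $q\mapsto -q$, $a\mapsto a$ sends $z\mapsto -z$. Combined with $g_i\mapsto -g_i$, this is an automorphism of the framed skein (the relation $g-g^{-1}=z$ is preserved). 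The Hecke idempotents are preserved under it, and a satellite with total weight $|\vec A|$ picks up the sign $(-1)^{\text{writhe}}$ times the change of unknot values. This bookkeeping is the main obstacle. I would instead verify it on the power-sum side: with the integral Newton elements $b_m$, the closure of $b_m$ scales by $(-1)^{m}$ under ($z\mapsto -z$, $g\mapsto -g$), since $b_m$ has $z$ times $m-1$ generators. The crossings of the companion diagram come in pairs in the cabled diagram with $|\vec\mu|^2$-type counts, and the framing compensation handles the writhe parity. So $\check Z_{\vec\mu}$ is invariant, and hence $Z_{\vec\mu}/\{\vec\mu\}=\check Z_{\vec\mu}/\{\vec\mu\}^2$ is invariant.

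\emph{The involution $q\mapsto q^{-1}$.} Under it $\{m\}\mapsto-\{m\}$, $z\mapsto -z$ and $\kappa_A\mapsto$ unchanged, but $q^{\kappa_A}\mapsto q^{-\kappa_A}=q^{\kappa_{A^t}}$ since $\kappa_{A^t}=-\kappa_A$. The classical symmetry of colored HOMFLY--PT, $W_{\vec A}(q^{-1},a)=(-1)^{|\vec A|}W_{\vec A^t}(q,a)$, then gives $W^{\vec\tau}_{\vec A}(q^{-1})=(-1)^{|\vec A|}W^{\vec\tau}_{\vec A^t}(q)$. Applying $\chi_{A^t}(\mu)=(-1)^{|\mu|-\ell(\mu)}\chi_A(\mu)$ yields $Z^{\vec\tau}_{\vec\mu}(q^{-1})=(-1)^{\ell(\vec\mu)}Z^{\vec\tau}_{\vec\mu}(q)$. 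Since $\{\vec\mu\}\mapsto(-1)^{\ell(\vec\mu)}\{\vec\mu\}$, the quotient is invariant. I would cite the transposition symmetry from the skein mirror/transpose involution of \cite{AM,Lukac}. It uses the involution of the skein sending $g\mapsto g^{-1}$ (equivalently $z\mapsto-z$), which carries the idempotent of $A$ to that of $A^t$, with the framing convention fixed by \eqref{eq:twist-eigen}.
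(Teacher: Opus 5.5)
Your reduction follows the paper's proof. The paper also quotes the two colored symmetries $W_{\vec A}(-q,a)=(-1)^{|\vec A|}W_{\vec A}(q,a)$ and $W_{\vec A}(q^{-1},a)=(-1)^{|\vec A|}W_{\vec A^t}(q,a)$. It checks compatibility with the factors $(-1)^{\tau|A|}q^{\tau\kappa_A}$ using that $\kappa_A$ is even and $\kappa_{A^t}=-\kappa_A$. It then tracks the signs of the divided alphabet on $s_{\vec A}[Y_q]$; you do this on the power-sum coefficients $Z^{\vec\tau}_{\vec\mu}/\{\vec\mu\}$ instead, which is equivalent. Your $q\mapsto q^{-1}$ computation and your passage to $\operatorname{PLog}$ and to $P^{\vec\tau}_{\vec A}$ are fine.

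\textbf{The flaw.} It lies in the one step you try to prove rather than cite, the $q\mapsto -q$ symmetry.
\begin{itemize}
\item The map $z\mapsto -z$, $g_i\mapsto -g_i$ is an isomorphism of Hecke algebras. Extending it to closed diagrams, however, means weighting a diagram $D$ by $(-1)^{\mathrm{cr}(D)}$, and that weighting turns the curl relation (positive curl $=a$) into positive curl $=-a$.
\item So it is not an automorphism of the framed skein with $a$ fixed. What it actually proves is $\mathcal H(D;-q,-a)=(-1)^{\mathrm{cr}(D)}\mathcal H(D;q,a)$.
\item Carried through the cabled diagram, this yields $\check Z^{\vec\tau}_{\vec\mu}(-q,-a)=(-1)^{|\vec\mu|}\check Z^{\vec\tau}_{\vec\mu}(q,a)$. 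Here the factor $(-a^{-1})^{\sum_b\tau_b|\mu^b|}$ does absorb the parity of the companion crossings, as you say. But the result is not invariance at fixed $a$.
\item The missing ingredient is the $a$-parity $\check Z^{\vec\tau}_{\vec\mu}(q,-a)=(-1)^{|\vec\mu|}\check Z^{\vec\tau}_{\vec\mu}(q,a)$. Your remark about crossings coming in pairs does not supply it.
\end{itemize}

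\textbf{The repair.} Count components instead of crossings.
\begin{itemize}
\item The quantity $(-1)^{\#L}\mathcal H(L;q,a)$ satisfies the framed skein relations with $z$ replaced by $-z$. An oriented smoothing changes $\#L$ by one, the curl factor $a$ is untouched, and the unknot value is odd in $z$. Hence $\mathcal H(L;-q,a)=(-1)^{\#L}\mathcal H(L;q,a)$.
\item Each word in the Newton element $b_m$ closes to a single curve. So by Lemma~\ref{lem:framed-parallel}, $\check Z^{\vec\tau}_{\vec\mu}=(-a^{-1})^{\sum_b\tau_b|\mu^b|}z^{r}\sum\mathcal H(\text{$r$-component links})$ with $r=\ell(\vec\mu)$.
\item This expression is invariant under $z\mapsto -z$.
\item The involution $q\mapsto q^{-1}$ also acts on $\Z[a^{\pm1},z^{\pm1}]$ by $z\mapsto -z$. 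So this single observation gives both invariances of $\check Z^{\vec\tau}_{\vec\mu}$, hence of $Z^{\vec\tau}_{\vec\mu}/\{\vec\mu\}=\check Z^{\vec\tau}_{\vec\mu}/\{\vec\mu\}^2$, without needing the transposition symmetry.
\end{itemize}
Alternatively, simply cite the $q\mapsto-q$ symmetry, as the paper does.
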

\begin{proof}
The standard HOMFLY--PT skein involutions (see, e.g., \cite{LP}), componentwise in the colors, give
\begin{equation}\label{eq:colored-q-symmetry}
 W_{\vec A}^{\vec\tau}(-q,a)
 =(-1)^{|\vec A|}W_{\vec A}^{\vec\tau}(q,a),
 \qquad
 W_{\vec A}^{\vec\tau}(q^{-1},a)
 =(-1)^{|\vec A|}W_{\vec A^t}^{\vec\tau}(q,a).
\end{equation}
The signed $q$-framing factors preserve these formulas because every $\kappa_A$ is even and $\kappa_{A^t}=-\kappa_A$.  On the divided alphabet,
\[
 \{m\}_{-q}=(-1)^m\{m\}_q,
 \qquad
 \{m\}_{q^{-1}}=-\{m\}_q.
\]
Hence 
$p_m(Y_{-q})
=
(-1)^m p_m(Y_q), \ p_m(Y_{q^{-1}})
=
-p_m(Y_q).$
Consequently, we have
\begin{equation}\label{eq:schur-q-symmetry}
s_{\vec A}[Y_{-q}]
=
(-1)^{|\vec A|}s_{\vec A}[Y_q], \ s_{\vec A}[Y_{q^{-1}}]
=
(-1)^{|\vec A|}
s_{\vec A^t}[Y_q].
\end{equation}
Then Lemma \ref{lem:primitive-symmetry} follows from \eqref{eq:colored-q-symmetry} and \eqref{eq:schur-q-symmetry}. 
\end{proof}

\subsection{Arithmetic localization of the Schur coefficient invariants}
\begin{proposition}\label{prop:localization}
For every oriented link, integer framing vector $\vec\tau$, and
tuple $\vec A$, the signed Schur coefficient invariants satisfies
\begin{equation}\label{eq:arith-localization}
 P_{\vec A}^{\vec\tau}
       \in\Z[v,a^{\pm1},\{m\}^{-2}:m\ge1].
\end{equation}
\end{proposition}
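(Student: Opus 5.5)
We prove \eqref{eq:arith-localization} one prime at a time and then use the two involutions to descend from $R_0$ to its invariant subring. Work with the divided-alphabet partition function $Z=Z^{\vec\tau}[Y]$. By \eqref{eq:framed-colored-invariants-R0} we have $W^{\vec\tau}_{\vec A}\in R_0$. Expanding $s_{\vec A}$ in power sums and dividing by $\{m\}$ introduces only quantum-integer denominators, which are already inverted in $R_0$. Hence $Z$ lies in $1+I$ inside the graded completion of $R_0\otimes\Lambda^{\otimes L}$ in the $y$-variables. By \eqref{eq:PLog},
\[
\sum_{\vec A}P^{\vec\tau}_{\vec A}s_{\vec A}(y)=\sum_{k\ge1}\frac{\mu(k)}k\Psi_k\log Z .
\]
The goal is to show that this series has all Schur coefficients in $R_0$. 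Since the Schur functions form a $\Z$-basis of $\Lambda^{\otimes L}$, it suffices to show integrality in $R_0\otimes\Z_{(p)}=R_{0,(p)}$ for every prime $p$. The point is that $R_0=\bigcap_p R_{0,(p)}$ inside $R_0\otimes\Q$, because $R_0$ is a free $\Z$-module: it is a localization of a Laurent polynomial ring at monic-type elements, and we should check that it is $\Z$-torsion free with $R_0/pR_0$ having no issue.

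Fix $p$. Lemma~\ref{lem:frobenius-lift} supplies a Frobenius lift $\Psi_p$ on the completed ring $A_p$, including after the substitution $x\mapsto Y$. To apply Lemma~\ref{lem:dwork}, we take $B=R_{0,(p)}$ with $\varphi_p=\Psi_p$. For the series ring, the graded completion $A_p$ replaces $B[[u]]$; the proof of Lemma~\ref{lem:dwork} uses only torsion-freeness, the Frobenius congruence on the whole ring, and degree-completeness, so it transfers verbatim. This gives $D_p(Z)=\log Z-\frac1p\Psi_p\log Z\in A_p$. By \eqref{eq:dwork-pairing},
\[
\operatorname{PLog}Z=\sum_{(k,p)=1}\frac{\mu(k)}{k}\Psi_k D_p(Z).
\]
Each coefficient $\mu(k)/k$ in this sum lies in $\Z_{(p)}$, and each $\Psi_k$ preserves $A_p$. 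Only finitely many $k$ contribute in each degree. Therefore $\operatorname{PLog}Z\in A_p$, so every $P^{\vec\tau}_{\vec A}\in R_{0,(p)}$. Intersecting over all $p$ gives $P^{\vec\tau}_{\vec A}\in R_0$. This intersection step is justified since $R_0$ is a localization of $\Z[q^{\pm1},a^{\pm1}]$ at elements whose content is $1$: the $\{m\}$ are primitive polynomials, so $R_0\cap\Q\cdot(\text{stuff})$ behaves well by Gauss's lemma.

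Finally, Lemma~\ref{lem:primitive-symmetry} shows that $P^{\vec\tau}_{\vec A}$ is fixed by $q\mapsto -q$ and $q\mapsto q^{-1}$. Hence $P^{\vec\tau}_{\vec A}\in R_0^G$, and Lemma~\ref{lem:invariant-localization} identifies $R_0^G$ with $\Z[v,a^{\pm1},\{m\}^{-2}:m\ge1]$, which is \eqref{eq:arith-localization}.

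I expect the main obstacle to be the $p$-local step, for two reasons. First, we must verify carefully that the divided-alphabet substitution is compatible with the Frobenius lift. Concretely, we need $\Psi_p(p_m(Y))=p_{pm}(y)/\{pm\}=p_{pm}(Y)$, which does hold because $\Psi_p\{m\}=\{pm\}$. Second, we need the Frobenius congruence to hold on the whole of $R_{0,(p)}\otimes\Lambda$ rather than just on generators. This requires that $\Psi_p$ be a ring endomorphism and that the congruence be multiplicative, which is standard. The descent $\bigcap_p R_{0,(p)}=R_0$ is a secondary point; I would handle it by noting that $R_0$ is a torsion-free $\Z$-module with $R_0\cap\frac1N R_0=R_0$ after clearing denominators prime by prime.
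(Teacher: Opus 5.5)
Your route is the paper's: $W^{\vec\tau}_{\vec A}\in R_0$, place $Z^{\vec\tau}[Y]$ in the degree-completion of $R_0\otimes\Lambda^{\otimes L}$, apply the Dwork lemma prime by prime with the Frobenius lift of Lemma~\ref{lem:frobenius-lift}, glue over primes, and descend to $R_0^G$ using the two involutions. Working in the graded completion $A_p$ instead of $B_p[[\mathbf u]]$ is harmless.

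The problem is the one non-formal input. You justify it by saying that expanding $s_{\vec A}$ in power sums and dividing by $\{m\}$ introduces only quantum-integer denominators. That is false: $s_A=\sum_\lambda\chi_A(\lambda)p_\lambda/\mathfrak z_\lambda$ carries the rational-integer denominators $\mathfrak z_\lambda$, which are exactly the denominators this proposition is about. They cancel only after re-expanding in Schur functions of $y$. For example,
\[
 s_{(2)}[Y]=\tfrac12\Bigl(\frac{p_1(y)^2}{\{1\}^2}+\frac{p_2(y)}{\{2\}}\Bigr)
 =\frac{q\,s_{(2)}(y)+q^{-1}s_{(1,1)}(y)}{\{1\}\{2\}},
\]
and power-sum bookkeeping does not explain why the $\tfrac12$ disappears. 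Without this, $Z^{\vec\tau}[Y]$ is not known to lie in $1+I$ of the $p$-integral ring $R_{0,(p)}\otimes\Lambda^{\otimes L}$. The congruence $\Psi_p(F)\equiv F^p\pmod p$ only has content for $p$-integral $F$, so the Dwork lemma cannot yet be invoked. The points you flagged as the main obstacles, namely $\Psi_p(p_m(Y))=p_{pm}(Y)$ and multiplicativity of the Frobenius congruence, are the easy parts.

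The missing idea is the one the paper uses. Since
\[
1/\{m\}=\sum_{i\ge0}q^{-(2i+1)m}=p_m(q^{-1},q^{-3},\ldots),
\]
the divided alphabet is the product alphabet $Y=y\cdot X$ with $X=\{q^{-1},q^{-3},q^{-5},\ldots\}$. Therefore
\[
 s_A[Y]=\sum_{B,C}g_{ABC}\,s_B(X)\,s_C(y)
\]
with integer Kronecker coefficients $g_{ABC}$. Principal specialization gives
\[
s_B(X)=q^{-|B|-2n(B)}\prod_{\square\in B}\bigl(1-q^{-2h(\square)}\bigr)^{-1},\qquad n(B)=\sum_i(i-1)B_i,
\]
and $1-q^{-2h}=q^{-h}\{h\}$ is a unit of $R_0$. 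So $s_A[Y]\in R_0\otimes\Lambda$, and the rest of your argument then goes through.

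A minor point on the gluing step: it needs neither freeness of $R_0$ nor Gauss's lemma. For any torsion-free $\Z$-module $M$ one has $M=\bigcap_pM_{(p)}$ inside $M\otimes\Q$. If $bf\in M$, and for each prime $p\mid b$ there is $a_p$ with $p\nmid a_p$ and $a_pf\in M$, then B\'ezout applied to $b$ and the $a_p$ gives $f\in M$. This is exactly how the paper concludes.
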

\begin{proof}
The only possible scalar denominators in the framing-independent
colored invariants $W_{\vec A}$ are quantum integers. One may
compute $W_{\vec A}$ in the preferred zero-framing realization,
where this follows from the projector construction above.
The signed framing multiplier is $(-1)^Eq^K$, with
\[
 E=\sum_b\tau_b|A^b|,\qquad K=\sum_b\tau_b\kappa_{A^b}.
\]
It is a Laurent-monomial unit, so the same localization contains
the framed invariants. It remains to show that the logarithm and
M\"obius combination defining the primitive coefficients leave
no integer denominators in the Schur basis.

Introduce independent degree variables $u_c$ for the link
components. The divided alphabet has the plethystic expression
\[
 Y=\frac{q^{-1}y}{1-q^{-2}}.
\]
The integral Kronecker rule and principal specialization of Schur
functions \cite{Macdonald} show that the Schur coefficients of
$s_A[Y]$ lie in $R_0$. Their hook denominators are products of
$1-q^{-2h}=q^{-h}\{h\}$, as in the proof of
Lemma~\ref{lem:frobenius-lift}.
Thus the degree-recorded partition function is
\[
 Z^{\vec\tau}[u_1Y^1,\ldots,u_LY^L]
   =\sum_{\vec A}W_{\vec A}^{\vec\tau}
                  \prod_c s_{A^c}[Y^c]u_c^{|A^c|}
 \in1+(\mathbf u)
     (R_0\otimes_\Z\Lambda^{\otimes L})[[\mathbf u]].
\]
Each coefficient of fixed $\mathbf u$-multidegree is a finite
symmetric-function sum. Its degree is recorded by $\mathbf u$;
the series represents the same coefficient data as the graded
completion in Lemma~\ref{lem:frobenius-lift}.

Fix a prime $p$, and put
$B_p=R_{0,(p)}\otimes_\Z\Lambda^{\otimes L}$.
Work in $B_p[[\mathbf u]]$ with $I=(\mathbf u)$.
The divided partition function lies in $1+I$.
Apply Lemma~\ref{lem:dwork} with $B=B_p$ and the coefficient
Adams operation $\varphi_p=\Psi_p$, extended by
$\psi_p(u_c)=u_c^p$. Lemma~\ref{lem:frobenius-lift} supplies the
Frobenius lift on coefficient variables, inverse quantum integers,
and symmetric-function variables.
Equation \eqref{eq:dwork-pairing} expresses the primitive
logarithm as a sum of $p$-integral Dwork differences indexed by
$(k,p)=1$. In any fixed multidegree, only finitely many terms
contribute, so passage to the corresponding Schur coefficient is
legitimate. Every fixed Schur coefficient is therefore $p$-integral.
Since the divided-alphabet substitution commutes with Adams, these
are the coefficients $P_{\vec A}^{\vec\tau}$ of
\eqref{eq:PLog}, with their degrees recorded by $\mathbf u$.

This holds for every prime. For completeness, the passage to
$\Z$ may be made coefficientwise. For a fixed coefficient $f$,
choose a nonzero integer $b$ with $bf\in R_0$; only finitely
many rational scalar denominators occur in a fixed multidegree.
For each prime $p\mid b$, the membership $f\in R_{0,(p)}$ supplies
an integer $a_p$ coprime to $p$ with $a_pf\in R_0$.
The integers $b$ and these finitely many $a_p$ have greatest common
divisor one. B\'ezout's identity therefore gives $f\in R_0$.

Every fixed primitive Schur coefficient thus belongs to $R_0$.
Lemma~\ref{lem:primitive-symmetry} shows that it is fixed by both
$q\mapsto-q$ and $q\mapsto q^{-1}$.
Lemma~\ref{lem:invariant-localization} consequently places it in
\[
 R_0^G=\Z[v,a^{\pm1},\{m\}^{-2}:m\ge1],
\]
which is \eqref{eq:arith-localization}.
\end{proof}

\subsection{Completion of the framed LMOV theorem}
\begin{proof}[Proof of Theorem~\ref{thm:framed-LMOV}]
Proposition~\ref{prop:rational-polynomiality} gives
\[
 vP^{\vec\tau}_{\vec A}\in\mathbb Q[v,a^{\pm1}].
\]
By Proposition~\ref{prop:localization}, write
\[
 P^{\vec\tau}_{\vec A}=\frac{A(v,a)}{B(v)},
\]
where $A(v,a)\in\mathbb Z[v,a^{\pm1}]$ and $B(v)$ is a product of the monic polynomials $\{m\}^2=vU_m(v)$.  Divide $vA(v,a)$ by $B(v)$ over $\mathbb Z[a^{\pm1}][v]$.  Rational polynomiality forces the remainder to vanish, so
\[
 vP^{\vec\tau}_{\vec A}\in\mathbb Z[v,a^{\pm1}].
\]
This proves Theorem~\ref{thm:framed-LMOV}.  Setting $\vec\tau=0$ recovers the zero-framed LMOV theorem.
\end{proof}

\appendix
\section{Detailed proof of the local Hecke identities}\label{app:local-hecke}

This appendix contains the representation-theoretic and relative-cocenter calculations
underlying Theorem~\ref{thm:relative-hecke}.  The notation
\[
 \HH=\HH_n(R_\alpha),\qquad \GG=\HH_{2n}(R_\alpha),\qquad A=\HH_L\otimes_{R_\alpha}\HH_R,\qquad
 b=b_n,\qquad C=T_d
\]
is exactly that of Section~\ref{sec:hecke}.  We reserve $\Delta_k$ for the size-$k$ matching
core and $\mathcal D_k$ (with factors $\mathcal D_L,\mathcal D_R$) for distinguished
shuffle sets.  The symbol $\eta_j=(n-j,1^j)$ denotes a hook partition.  The canonical symmetrizing trace of the local Hecke algebra is written $t_{\HH}$; the symbol $\tau$ is therefore reserved throughout the paper for framing data.

\subsection{Local Hecke algebra and standard inputs}
Let $\HH_m$ be the balanced type-$A$ Hecke algebra with braid generators $g_i$ and
\begin{equation}\label{eq:hecke-rel}
 g_i^2=1+zg_i,
\end{equation}
where $z=q-q^{-1}$.  Throughout this section the positive standard basis is the unscaled basis
\[
 T_w:=g_{i_1}\cdots g_{i_{\ell(w)}}
\]
for any reduced expression $w=s_{i_1}\cdots s_{i_{\ell(w)}}$; in particular $T_{s_i}=g_i$.  This notation is distinct from the Murphy normalization $\widehat T_i=qg_i$.  With this convention
\[
 \theta(g_i)=-g_i^{-1}=z-g_i,\qquad \theta(T_w)=(-1)^{\ell(w)}T_{w^{-1}}^{-1}.
\]
The canonical symmetrizing trace is
\[
 t_{\HH}(T_w)=\delta_{w,1},
\]
and $T_w,T_{w^{-1}}$ are dual bases.  We use the integral cocenter basis theorem for finite type-$A$ Hecke algebras in the form developed by Geck--Pfeiffer \cite[Chapter 8]{GP}: minimal-length representatives of conjugacy classes give a free integral cocenter basis.  We also use the annular identification of this cocenter with the positive HOMFLY--PT skein \cite{AM,Lukac}.

Fix $n\ge2$ and a root $\alpha$ with $\ord(\alpha^2)=n$, i.e.
$\alpha^{2n}=1$ and $\alpha^{2j}\ne1$ for $0<j<n$.  Work over the local ring
\[
 R_\alpha=\Q(\alpha)[q^{\pm1}]_{(q-\alpha)}
 =\left\{\frac{f(q)}{g(q)}:\ f,g\in\Q(\alpha)[q^{\pm1}],\ g(\alpha)\ne0\right\}.
\]
Then $z$ and every $\{j\}$ with $0<j<n$ are units, while $\{n\}$ has a simple zero.  Put
\[
 \HH=\HH_n(R_\alpha),\qquad \GG=\HH_{2n}(R_\alpha),\qquad A=\HH\otimes_{R_\alpha}\HH\subset \GG,
 \qquad \eps_n=(-1)^{n-1}.
\]
Let $C$ be the positive block exchange between the two packets.  Let $b=b_n$ be the regular Newton element \eqref{eq:newton-coupon}; write $b_L,b_R$ for its placements in the two packets.

For an algebra $B$ acting on a bimodule $M$ write $[B,M]$ for the $R_\alpha$-linear span of $bm-mb$.  It is an $R_\alpha$-linear submodule, not an ideal.  Every multiplication of a relative cocenter class below is justified by a commutation relation.

We use the corrected Aiston--Morton normalization \cite{AM}.  The generic Schur element of the block labeled by $\lambda\vdash n$ has the form
\begin{equation}\label{eq:schur-element}
 s_\lambda=q^{\kappa_\lambda/2}\prod_{\square\in\lambda}\frac{\{h(\square)\}}{z}.
\end{equation}
A proper Young subalgebra all of whose factors have size $<n$ has unit Schur elements over $R_\alpha$ and is therefore split separable.  The degree-$n$ complete and elementary annular skein elements may have a single factor $\{n\}^{-1}$; all lower degrees are regular.

\subsection{Expansion of the block exchange}
Label positions $0,\dots,2n-1$ for the combinatorial proof.  A partial injection
\[
 f:S\hookrightarrow\{0,\dots,n-1\}
\]
defines the involution $p_f$ exchanging $r$ and $n+f(r)$ for $r\in S$.  Let
\[
 R_k=\sum_{|S|=k}T_{p_f},
\]
the sum over all partial injections of size $k$.

\begin{lemma}\label{lem:full-grid}
There is an exact identity over $\Z[z]$
\begin{equation}\label{eq:full-grid}
 C^2=\sum_{k=0}^nz^kR_k.
\end{equation}
Each $R_k$ commutes with $A$, $R_0=1$, and
\begin{equation}\label{eq:top-grid}
 R_n=\Omega_{\HH}C,
 \qquad
 \Omega_{\HH}=\sum_{w\in S_n}T_{w,L}T_{w^{-1},R}.
\end{equation}
\end{lemma}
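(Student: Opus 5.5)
The plan is to compute $C^2$ strand by strand with Jucys--Murphy type elements. Then I read off its expansion in the standard basis $\{T_w\}$ and check that only the involutions $p_f$ occur, each with coefficient $z^{|S|}$.

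\textbf{Step 1: the one-strand identity.} First I establish, over $\Z[z]$ with $g_i^2=1+zg_i$, the identity
\[
 g_r g_{r+1}\cdots g_{r+n-1}\,g_{r+n-1}\cdots g_{r+1}g_r
 =1+z\sum_{j=1}^{n}T_{(r,\,r+j)} .
\]
Here $T_{(r,r+j)}$ is the positive standard lift of the transposition. The proof is by induction on $n$. One peels off the innermost $g_{r+n-1}^2=1+zg_{r+n-1}$. The conjugate $g_r\cdots g_{r+n-2}\,g_{r+n-1}\,g_{r+n-2}\cdots g_r$ is a reduced word for the transposition $(r,r+n)$, so it equals $T_{(r,r+n)}$. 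Geometrically the left-hand side is the braid in which one strand encircles a packet of $n$ parallel strands.

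\textbf{Step 2: factoring $C^2$.} Next I factor the pure braid $C^2$ as a product of $n$ such encircling braids. The left strands are taken from the innermost outward, and each one encircles the right packet. The remaining left strands must be transported past each encircling braid; I do this by conjugating with the appropriate positive shuffle, in the spirit of Lemma~\ref{lem:exterior-transport}.

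Expanding the product by Step 1 gives a sum over choices. For each left strand $r$, one chooses either the summand $1$ or a target $f(r)$ in the right packet, at cost $z$.

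\textbf{Step 3: the key combinatorial claim (main obstacle).} I must show two things:
\begin{itemize}
\item choices with two left strands sharing the same target cancel, or recombine, so that nothing survives;
\item injective choices multiply, without quadratic corrections, to exactly $T_{p_f}$.
\end{itemize}
The second point should follow from a length computation: the factors correspond to disjoint transpositions that are nested in the order of Step 2, so $\ell(p_f)$ is the sum of the lengths and the product is reduced.

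The first point is where I expect real difficulty. My first attempt is an induction on the number of left strands using the recursion
\[
 C^2_{(k+1,n)}=\bigl(\text{conjugated encircling element}\bigr)\cdot C^2_{(k,n)} .
\]
Assume the expansion holds for $C^2_{(k,n)}$. When the new strand's target $j$ is already used by $f$, the product with $T_{p_f}$ lands in a non-reduced situation. Applying the quadratic relation there should produce terms that exactly cancel against the contributions in which the new strand chooses $1$ but an earlier strand had been rerouted.

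If this bookkeeping becomes unmanageable, the fallback is to compare traces. Both sides commute with $A$, as Step 4 shows, and both are determined by their images in the split semisimple specialization. There one can check the identity on the standard Murphy basis using the known eigenvalues of the full twist.

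\textbf{Step 4: commutation with $A$.} Fix $k$. Conjugation by $s_i$ in the left packet permutes $\{p_f:|S|=k\}$ via $f\mapsto f\circ s_i$. For each pair $\{f,\,f\circ s_i\}$, the Hecke relations give
\[
 g_iT_{p_f}=T_{p_{f\circ s_i}}g_i
\]
up to terms that cancel pairwise inside the sum. This shows that $g_iR_k=R_kg_i$, and the same argument applies in the right packet.

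\textbf{Step 5: the boundary cases.} The equality $R_0=1$ is immediate. For $k=n$, every $f$ is a bijection $w\in S_n$, and $p_w=(w_L\,w^{-1}_R)\,d$ with additive lengths. This gives $T_{p_w}=T_{w,L}T_{w^{-1},R}\,C$, and summing over $w\in S_n$ yields $R_n=\Omega_{\HH}C$.
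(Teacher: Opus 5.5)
\textbf{The gap is Step 3.} Its second bullet is false as stated, and its first bullet is not proved.

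Even granting your factor form, the length argument fails for crossing matchings. Take $n=2$ with strands $1,2\mid 3,4$ and $f\colon 1\mapsto 3,\ 2\mapsto 4$. Then $\ell((13)(24))=4<\ell((13))+\ell((24))=6$, so $T_{(13)}T_{(24)}\neq T_{p_f}$.

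More seriously, the factor form itself is wrong. A pure braid in which a non-innermost left strand encircles the right packet, without linking the intervening left strands, cannot be a positive word. Since $g_i^{-1}=g_i-z$, the inverse crossings produce extra terms. With $J=g_2g_3^2g_2=1+z(g_2+g_2g_3g_2)$ one has $C^2=g_1^{-1}Jg_1\cdot J$, but
\[
 g_1^{-1}g_2g_1=T_{(13)}-zT_{s_2s_1},\qquad
 g_1^{-1}g_2g_3g_2g_1=T_{(14)}-zT_{s_2s_3s_2s_1}.
\]
Put $d=(13)(24)$ and $w_0=(14)(23)$. The correct answer $1+zR_1+z^2(T_d+T_{w_0})$ then emerges only through cancellations across levels:
\begin{itemize}
\item the injective product $(g_1^{-1}g_2g_3g_2g_1)\,g_2=T_{w_0}-zT_{ds_1}$ carries a defect;
\item the non-injective product $(g_1^{-1}g_2g_3g_2g_1)(g_2g_3g_2)=T_{s_2s_3s_2s_1}+zT_{ds_1}$ cancels both that defect and the conjugation correction.
\end{itemize}
You give no mechanism controlling such cancellations for general $n$. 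The fallback does not supply one either. Equality of characters only gives equality modulo $[\HH_{2n},\HH_{2n}]$, and commuting with $A$ does not make either side central. Moreover, computing the action of $\sum_kz^kR_k$ on simple modules is essentially the identity you are trying to prove.

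\textbf{The missing idea is to avoid pure factors altogether, as the paper does.} Right-multiply $T_d$ by a reduced word for the second $d$, arranged in $n$ rows, where row $r$ moves one low label across the high packet. Use $T_wg_i=T_{ws_i}$ at ascents and $T_wg_i=T_{ws_i}+zT_w$ at descents. In each row the moving label meets descents exactly at the high labels not yet selected. Once the stationary $z$-term is chosen, all remaining comparisons in that row are ascents. Hence the expansion paths are exactly the partial injections, each with weight $z^{|S|}$ and distinct endpoints, and no cancellation ever occurs.

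\textbf{Step 4 is likewise only asserted.} A cleaner argument runs as follows. $C^2$ commutes with $A$, since conjugation by $C$ swaps the packets. $R_k$ is supported on the $(W_A,W_A)$-double coset with $k$ exchanged positions, and the span of each double coset is an $A$-sub-bimodule. Comparing components of $aC^2=C^2a$ then gives $aR_k=R_ka$ over $\Z[z]$.

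Steps 1 and 5 are correct.
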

\begin{proof}
Write the second block exchange as $n$ rows.  Before row $r$ the active window contains unused high labels or low labels selected by previous rows.  The moving low label $r$ encounters descents exactly at unused high slots.  At a descent the Hecke rule permits either a swap or a stationary term with coefficient $z$.  Once stationarity is chosen at a high slot, the moving label becomes that high label; all remaining comparisons in the row are ascents, so no second stationary choice is possible.  Thus a path is exactly a partial injection and has coefficient $z^{|S|}$.  Distinct injections give distinct endpoint permutations.

For a full matching determined by $w\in S_n$, the endpoint is $(w^{-1}\oplus w)d$, where $d$ is the block exchange.  Its length is $n^2+2\ell(w)$, so the standard basis factorization is length-additive and summing over $w$ gives \eqref{eq:top-grid}.  The double-coset decomposition by the number of exchanged positions shows that each $R_k$ commutes with $A$.
\end{proof}

\subsection{Proper parabolic decomposition}
For $0<k<n$ put $m=n-k$ and define
\[
 B_L=\HH_m\otimes \HH_k\subset \HH,\qquad
 B_R=\HH_k\otimes \HH_m\subset \HH,
\]
and the proper Young subalgebra
\[
 B_k=(B_L)_L\otimes(B_R)_R\subset A.
\]
Let $d_k$ exchange the two matched $k$-strand subpackets and let
\[
 \Omega_k:=\sum_{w\in S_k}T^{(k)}_{w,L}T^{(k)}_{w^{-1},R},
 \qquad
 \Delta_k=\Omega_kT_{d_k}.
\]
Here the superscript $(k)$ only records placement in the two selected matched $k$-strand factors.  Thus $\Omega_k$ is the relative Casimir of those two $\HH_k$-factors.  Let $\mathcal D_k$ be the set of minimal shuffles selecting the matched subsets in both packets.

\begin{lemma}\label{lem:shuffle}
All factors in
\begin{equation}\label{eq:shuffle}
 R_k=\sum_{x\in\mathcal D_k}T_x\Delta_kT_{x^{-1}}
\end{equation}
are length-additive, and $\Delta_k$ commutes with $B_k$.
\end{lemma}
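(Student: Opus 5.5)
The plan is to read \eqref{eq:shuffle} as a refinement of the row-by-row path description in the proof of Lemma~\ref{lem:full-grid}, organized by double cosets of the Young subgroup $S_n\times S_n$ in $S_{2n}$. Each endpoint permutation $p_f$ with $|S|=k$ is determined by two data: the pair of $k$-subsets $S\subset\{0,\dots,n-1\}$ and $f(S)$ (shifted by $n$) of exchanged positions in the two packets, and the bijection $S\to f(S)$. The pair of subsets determines a unique minimal-length shuffle $x\in\mathcal D_k$. It moves the selected positions of the left packet to its rightmost $k$ slots and those of the right packet to its leftmost $k$ slots, in each case preserving relative order. The bijection then corresponds to some $w\in S_k$. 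I would then show that
\[
 p_f = x\,\bigl((w^{-1}\oplus w)\,d_k\bigr)\,x^{-1},
\]
where $(w^{-1}\oplus w)d_k$ is the endpoint of a full matching on the two adjacent central $k$-subpackets. Summing over $x$ and $w$ then reproduces $R_k$, once the standard basis elements factor as $T_{p_f}=T_xT_{w^{-1}\oplus w}T_{d_k}T_{x^{-1}}$.

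The main step is length-additivity, $\ell(p_f)=2\ell(x)+\ell(w^{-1}\oplus w)+\ell(d_k)$. The case $k=n$ is already in Lemma~\ref{lem:full-grid}, which gives $\ell=k^2+2\ell(w)$ for the central core. I would therefore count inversions of $p_f$ directly. Inversions among the exchanged labels reproduce the core count $k^2+2\ell(w)$. Each inversion between an exchanged label and an unexchanged one corresponds to a position the shuffle passes over, and there are $\ell(x)$ of these on each side, since $x$ and $x^{-1}$ contribute symmetrically. Unexchanged labels stay fixed and produce no inversions among themselves. Because $x$ is a minimal coset representative, $\ell(xy)=\ell(x)+\ell(y)$ for $y$ in the parabolic subgroup containing the core. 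The same argument applied on the right, using that $x^{-1}$ is minimal for the corresponding right coset, gives the factorization. This inversion count is the step I expect to be hardest: one has to check that conjugating by $x$ never cancels crossings between a selected and a non-selected strand. I would first verify it on $n=2$, $k=1$ and then argue through the descent sets $D_R(x)$.

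Commutation of $\Delta_k$ with $B_k$ then follows in two pieces. The factors $\HH_m$ of $B_L$ and $B_R$ act on strands disjoint from the support of $\Delta_k$, except where the support of $T_{d_k}$ sits, and there they act on the outer $m$-strand blocks. Those blocks are untouched by $d_k$, which exchanges only the central $k$-strand subpackets, so they commute with both $\Omega_k$ and $T_{d_k}$. For the two matched $\HH_k$ factors, I would apply the $k$-strand case of Lemma~\ref{lem:full-grid}. There $\Delta_k=\Omega_kT_{d_k}$ is the top term $R_k^{(k)}$ of the $k$-strand grid expansion, and that term commutes with $\HH_k\otimes\HH_k$. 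Hence $\Delta_k$ commutes with all of $B_k$.
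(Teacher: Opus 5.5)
Your proposal is correct and follows the paper's proof. The paper also obtains $\ell(xp_wx^{-1})=2\ell(x)+k^2+2\ell(w)$ by a direct inversion count, and your split into exchanged/exchanged, exchanged/fixed and fixed/fixed pairs is exactly that count; it likewise handles the stationary $\HH_m$ factors by disjointness. For the matched $\HH_k$ factors you invoke the $k$-strand case of Lemma~\ref{lem:full-grid}, where $\Delta_k$ is the top term $R^{(k)}_k$ and so commutes with $\HH_k\otimes\HH_k$. The paper instead argues directly from Casimir sliding $a_L\Omega_k=\Omega_ka_R$ and block transport $a_RT_{d_k}=T_{d_k}a_L$; both are valid.

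One caution: your remark that minimality of $x^{-1}$ for right cosets gives length-additivity on the right does not hold on its own. It would already fail for the trivial core, since $\ell(xx^{-1})=0$. Your inversion count, however, already gives the full length formula, so that remark can simply be dropped.
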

\begin{proof}
Choose matched subsets $S=\{s_0<\cdots<s_{k-1}\}$ and $T=\{t_0<\cdots<t_{k-1}\}$.  If $x=x_L\oplus x_R$ is the corresponding pair of minimal shuffles and $w\in S_k$ is the internal matching permutation, then a direct inversion count gives
\[
 \ell(xp_wx^{-1})=2\ell(x)+k^2+2\ell(w).
\]
Hence every displayed product is its positive standard basis element, and all size-$k$ partial matchings occur once.  The stationary $\HH_m$ factors are disjoint from $\Delta_k$.  On a matched factor the Casimir sliding identity $a_L\Omega_k=\Omega_ka_R$ and block transport $a_RT_{d_k}=T_{d_k}a_L$ give $a_L\Delta_k=\Delta_ka_L$; similarly on the other side.
\end{proof}

\subsection{Higman maps}
Let $B$ be a finite free symmetric $R_\alpha$-algebra with dual bases $e_i,e_i^*$ and let $M$ be a $B$-bimodule.  For background on Higman traces and Casimir operators for symmetric algebras, see \cite[Sections~2.3--2.5]{LorenzTokoly}.  Define the Higman map
\begin{equation}\label{eq:higman}
 h_B(y)=\sum_ie_i^*ye_i.
\end{equation}
It kills $[B,M]$ and takes values in $M^B$.

\begin{lemma}\label{lem:higman}
Let \(B\) be a split separable symmetric \(R_\alpha\)-algebra with unit Schur elements. Then the induced Higman map
\[
\bar h_B:M/[B,M]\rightarrow M^B
\]
is an isomorphism.
(See \cite[Sections~2.3--2.5]{LorenzTokoly}.)
If \(M\) is \(R_\alpha\)-free and
\(
h_B(y)\in\{n\}^aM,
\)
then
\[
y\in[B,M]+\{n\}^aM.
\]
\end{lemma}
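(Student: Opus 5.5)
The isomorphism part is the standard Higman--Casimir theory for symmetric algebras, and I would quote it as in \cite{LorenzTokoly}. Since $B$ is split separable with unit Schur elements, I would use the central element $\gamma_B:=h_B(1)=\sum_ie_i^*e_i$. On each simple block it acts by the Schur element of that block. By hypothesis these Schur elements are units of $R_\alpha$, so $\gamma_B$ is an invertible central element. Define $\sigma:M^B\to M/[B,M]$ by $\sigma(x)=\gamma_B^{-1}x \bmod [B,M]$.

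The first step is to check that $\bar h_B\circ\sigma=\mathrm{id}$. For $x\in M^B$ and central $c$ we have $h_B(cx)=\sum_ie_i^*cxe_i=cx\sum_ie_i^*e_i=c\gamma_Bx$. Taking $c=\gamma_B^{-1}$ gives $x$.

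The second step is to check that $\sigma\circ\bar h_B=\mathrm{id}$, i.e.\ $\gamma_B^{-1}h_B(y)\equiv y$ modulo $[B,M]$. Since $\gamma_B^{-1}$ is central, each term satisfies $\gamma_B^{-1}e_i^*ye_i\equiv e_i\gamma_B^{-1}e_i^*y$ modulo $[B,M]$. Summing gives $\gamma_B^{-1}h_B(y)\equiv\gamma_B^{-1}\bigl(\sum_ie_ie_i^*\bigr)y$. Because the trace is symmetric, the dual bases can be swapped, so $\sum_ie_ie_i^*=\sum_ie_i^*e_i=\gamma_B$. This gives $\gamma_B^{-1}h_B(y)\equiv y$, and so $\bar h_B$ is an isomorphism.

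For the divisibility statement, suppose $h_B(y)=\{n\}^am$ with $m\in M$. By the second step,
\[
y\equiv\gamma_B^{-1}h_B(y)=\{n\}^a\gamma_B^{-1}m \pmod{[B,M]}.
\]
Here $\gamma_B^{-1}m\in M$ because $\gamma_B^{-1}\in B$ acts on $M$. Hence $y\in[B,M]+\{n\}^aM$. This direction does not need the freeness of $M$. Freeness would matter only if one wanted to divide an element of $M^B$ by $\{n\}^a$ and stay inside $M^B$, and the construction above never does that.

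The main obstacle is justifying that $\gamma_B$ is invertible in $B$ over $R_\alpha$, rather than only after tensoring with the fraction field. I would argue this blockwise. Split separability gives a decomposition $B\cong\prod_\lambda\mathrm{Mat}_{d_\lambda}(R_\alpha)$, and $\gamma_B$ acts on the $\lambda$-block by the Schur element $s_\lambda$. The unit-Schur-element hypothesis, applied through \eqref{eq:schur-element} for Young factors of size less than $n$, makes each $s_\lambda$ a unit of $R_\alpha$. So $\gamma_B=\sum_\lambda s_\lambda e_\lambda$, where the $e_\lambda$ are the central primitive idempotents, and its inverse is $\sum_\lambda s_\lambda^{-1}e_\lambda\in B$.
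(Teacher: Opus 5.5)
Your argument is correct, but it takes a different route from the paper's. The paper quotes the Higman isomorphism as a black box and then argues formally. Writing $h_B(y)=\{n\}^aY$, it uses that $M$ is $R_\alpha$-free to deduce $bY=Yb$ from $\{n\}^a(bY-Yb)=0$, so $Y\in M^B$. Surjectivity of $\bar h_B$ then gives $x$ with $h_B(x)=Y$, and injectivity gives $y-\{n\}^ax\in\ker h_B=[B,M]$.

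You instead prove the isomorphism yourself, by writing down the inverse $x\mapsto\gamma_B^{-1}x$ built from the invertible central element $\gamma_B=h_B(1)$. The identity $y\equiv\gamma_B^{-1}h_B(y)\pmod{[B,M]}$ holds for every $y\in M$, and it gives the divisibility statement in one line.

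What your route buys is a self-contained proof, together with the correct observation that freeness of $M$ is superfluous. The paper needs torsion-freeness only because its inverse is available on $M^B$ alone, whereas your splitting formula applies directly to $\{n\}^am$ with $m\notin M^B$. The paper's route, conversely, never needs any information about $h_B(1)$. That matters outside characteristic zero: on $\mathrm{Mat}_p$ over a field of characteristic $p$ one has $h(1)=p\cdot 1=0$, although the Higman isomorphism still holds. In the paper all modules to which the lemma is applied are the free modules $\HH$ and $\GG$, so both versions suffice.

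One correction to your invertibility step: $\gamma_B$ acts on the $\lambda$-block by $d_\lambda s_\lambda$, where $d_\lambda$ is the matrix size in your decomposition, not by $s_\lambda$. To see this, take $\mathrm{Mat}_d$ with trace $s^{-1}\operatorname{Tr}$. The dual bases are $E_{uv}$ and $sE_{vu}$, so $h(y)=s\operatorname{Tr}(y)\cdot 1$ and $h(1)=ds$; this matches the paper's description of the Higman map as $s_\lambda\operatorname{Tr}_\lambda$ on a simple block. Since $\mathbb{Q}\subset R_\alpha$, $d_\lambda$ is a unit, so your conclusion stands. This is exactly where characteristic zero enters your argument. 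If you want to avoid it, use left multiplication by any $y_0\in B$ with $h_B(y_0)=1$ (for instance $s_\lambda^{-1}$ times a primitive idempotent in each block) in place of $\gamma_B^{-1}$. The analogous computations, using $\sum_ie_iy_0e_i^*=h_B(y_0)$, then work without inverting dimensions.
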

\begin{proof}
The first statement is the Higman trace theorem. For the second statement, write
\[
h_B(y)=\{n\}^aY.
\]

Since \(h_B(y)\in M^B\) and \(M\) is \(R_\alpha\)-free, the relation
\[
\{n\}^a(bY-Yb)=0
\]
implies \(bY=Yb\) for all \(b\in B\). Hence \(Y\in M^B\). 

Because the induced Higman map
\(
\bar h_B:M/[B,M]\xrightarrow{\sim}M^B
\)
is an isomorphism, there exists
\(
x\in M
\)
such that
\(
h_B(x)=Y.
\)
Therefore:
\(
h_B(y-\{n\}^ax)=0.
\)
Hence:
\(
y-\{n\}^ax\in\ker h_B=[B,M].
\)
Therefore:
\(
y\in[B,M]+\{n\}^aM.
\)

\end{proof}

For a proper parabolic inclusion $B\subset K$ with
compatible traces, define the norm on a $K$-bimodule $M$ by
\begin{equation}\label{eq:relative-norm}
 N_B^K(y)=\sum_{x\in\mathcal D}T_{x^{-1}}yT_x.
\end{equation}
These relative norms are discussed in \cite{JonesCenters,FrancisJones}.
The distinguished factorization $w=xu$ gives the exact
transitivity identity
\begin{equation}\label{eq:higman-transitivity}
  h_BN_B^K= h_K,
\end{equation}
where $h_B$ and $h_K$  act on the same  $K\text{-bimodule }M$,
with $M\text{ regarded as a }B\text{-bimodule for }h_B$.

Indeed,
\[
 \sum_{u\in W(B)}\sum_{x\in\mathcal D}
 T_{u^{-1}}T_{x^{-1}}yT_xT_u
 =\sum_{w\in W(K)}T_{w^{-1}}yT_w.
\]
The calculation uses length additivity only in the two
outer standard-basis factors and is valid on any
$K$-bimodule $M$.

\subsection{Divisibility of the Higman images}
Let $\eta_j=(n-j,1^j)$ be the hook partitions.  Put
\[
 \kappa_j=n(n-1-2j),
 \qquad
 c_j=q^{\kappa_j/2}\{n-j-1\}!\{j\}!.
\]
Here factorials are products of quantum braces $\{m\}$, and $\{0\}!=1$.

\begin{lemma}\label{lem:higman-square}
The actual element $h_{\HH}(b_n)$ belongs to $\{n\}^2\HH$.
\end{lemma}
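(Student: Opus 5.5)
The plan is to compute $h_{\HH}(b_n)$ through the central-idempotent decomposition of $\HH$ over the generic field $\Q(\alpha)(q)$, and then show that the resulting central element lies in $\{n\}^2\HH$ over $R_\alpha$. Since $h_{\HH}(y)=\sum_w T_{w^{-1}}\,y\,T_w$ is the Higman map for the symmetrizing trace $t_{\HH}$, the standard formula for split semisimple symmetric algebras applies generically. It gives
\[
 h_{\HH}(y)=\sum_{\lambda\vdash n}\frac{\chi^\lambda(y)}{s_\lambda}\,\varepsilon_\lambda\cdot s_\lambda\cdot\frac{1}{\dim}\quad\text{(suitably normalized)},
\]
which, more precisely, reads
\[
 h_{\HH}(y)=\sum_\lambda \chi^\lambda(y)\, s_\lambda^{-1}\, z_\lambda^{\mathrm{cen}},
\]
with $z^{\mathrm{cen}}_\lambda=s_\lambda\varepsilon_\lambda$ up to the usual convention, i.e. $h_{\HH}(y)=\sum_\lambda\chi^\lambda(y)\,\varepsilon_\lambda\cdot(\text{Schur factor})$. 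So the first step is to fix the normalization carefully, using \eqref{eq:schur-element}.

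Next I compute $\chi^\lambda(b_n)$. The annular closure of $b_n$ is $B_n=\{n\}P_n$, and under the cocenter/annulus identification the trace of $\chi^\lambda$ on the cocenter class of $b_n$ equals $\{n\}$ times the coefficient of $Q_\lambda$ in $P_n$. That coefficient is $\chi_\lambda((n))$, which is nonzero only on the hooks $\eta_j$, where it equals $(-1)^j$. Hence $\chi^\lambda(b_n)=(-1)^j\{n\}$ for $\lambda=\eta_j$ and $0$ otherwise, up to the normalizing power of $q$ relating Hecke characters to skein coefficients. This already gives one factor of $\{n\}$.

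The second factor should come from the Schur elements of the hooks. In $s_{\eta_j}$ the hook lengths are $n$ (at the corner), $n-j-1,\dots,1$ along the arm and $j,\dots,1$ along the leg. So
\[
 s_{\eta_j}=q^{\kappa_j/2}\,\frac{\{n\}\{n-j-1\}!\{j\}!}{z^n}=\frac{\{n\}\,c_j}{z^n},
\]
which explains the definition of $c_j$. The combination $\chi^{\eta_j}(b_n)/s_{\eta_j}$ (or its product with $s_{\eta_j}$, depending on the normalization) is therefore $(-1)^j z^n q^{\cdots}/c_j$, a unit, times a power of $\{n\}$.

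The main obstacle is that the central idempotents $\varepsilon_{\eta_j}$ are not integral over $R_\alpha$: the hook blocks are exactly the ones that fail to be separable at $q=\alpha$. Integral elements come instead from the integral center, namely the symmetric (Jucys–Murphy) polynomials or the Geck–Pfeiffer central basis. I will therefore rewrite $h_{\HH}(b_n)=\sum_j(-1)^j u_j\{n\}^{e}\varepsilon_{\eta_j}$ as $\{n\}^2$ times an element $E$, and show that $E$ is integral. To do this I will exhibit $E$ as the image under Higman transitivity \eqref{eq:higman-transitivity} of something regular. Concretely, $h_{\HH}(b_n)=h_{\HH}(1)\cdot(\ldots)$ is not available, so instead I will use that $h_{\HH}(b_n)$ is integral a priori, since $b_n$ is integral and the $T_w$ are integral. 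Its class in $\HH/\{n\}^2\HH$ then vanishes if and only if its pairing with every integral element vanishes modulo $\{n\}^2$. By the trace duality $t_{\HH}(h_{\HH}(b_n)\,x)=\Tr_{\mathrm{reg}}(b_n x)$-type formulas, the pairing is $\sum_j\chi^{\eta_j}(b_n)\chi^{\eta_j}(x)$ up to units.

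Each such term already carries one factor $\{n\}$ from $\chi^{\eta_j}(b_n)$. The second factor then comes from the fact that $\sum_j(-1)^j\chi^{\eta_j}(x)$ is the $P_n$-coefficient evaluation of $x$, which is divisible by $\{n\}$ by Morton's identity $[P_n,e]=\{n\}w^n$ applied to closures. Equivalently, the hook characters alternate-sum to a multiple of $\{n\}$ on integral elements, because the Frobenius-type relation makes that alternating sum vanish at $q=\alpha$. Checking this alternating-sum vanishing at $q=\alpha$ for all integral $x$ is the step I expect to be hardest, and I would verify it on the minimal-length cocenter basis of Geck–Pfeiffer.
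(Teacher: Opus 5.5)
Your duality reduction is correct, but it only restates the lemma, and the step that carries all the content is left unproved. Two facts make the reduction exact:
\begin{itemize}
\item $h_{\HH}(b_n)=\sum_wT_{w^{-1}}b_nT_w$ is integral, and $T_w,T_{w^{-1}}$ are dual bases for $t_{\HH}$ over $R_\alpha$.
\item $t_{\HH}(h_{\HH}(b)x)$ is exactly the trace of $y\mapsto byx$ on $\HH$, i.e.\ $\sum_\lambda\Tr_\lambda(b)\Tr_\lambda(x)$ generically, with no unit ambiguity.
\end{itemize}
Hence the lemma is \emph{equivalent} to the claim that $\phi(x):=\sum_j(-1)^j\Tr_{\eta_j}(x)$ vanishes at $q=\alpha$ for every $x\in\HH$. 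You flag this as the hardest step and do not carry it out. The justification you sketch does not supply it. Morton's identity $[P_n,e]=\{n\}w^n$ concerns sliding a through-arc past $P_n$ in the relative annulus; it says nothing about the closure of an arbitrary integral element. ``The Frobenius-type relation'' is not an argument either.

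Separately, your displayed generic formula collapses to $\sum_\lambda\Tr_\lambda(y)\varepsilon_\lambda$. The correct formula is $h_{\HH}(y)=\sum_\lambda s_\lambda\Tr_\lambda(y)p_\lambda$, and this is how the paper gets $z^nh_{\HH}(b_n)=\{n\}^2\sum_j(-1)^jc_jp_j$ directly. The paper then removes exactly the obstacle you identified, the simple poles of the hook projectors, by regrouping. It uses that $\{n\}p_j$ and $\sum_jp_j$ are regular and that $(-1)^jc_j(\alpha)=\eps_n n$ for every $j$, so the bracket is regular.

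The missing idea that completes your route is to read $\phi$ in terms of symmetric functions.
\begin{itemize}
\item Since $\chi_\lambda((n))$ equals $(-1)^j$ on $\eta_j$ and vanishes off hooks, $\phi(x)=\langle[x],p_n\rangle$. This is the Hall pairing of $[x]=\sum_\lambda\Tr_\lambda(x)s_\lambda$ with the power sum $p_n$.
\item $\phi$ kills $[\HH,\HH]$. So by Geck--Pfeiffer it suffices to evaluate it on $T_{w_\mu}$, where $w_\mu$ is a product of Coxeter elements in a standard Young subgroup.
\item The map $[x]\mapsto\sum_\lambda\Tr_\lambda(x)s_\lambda$ is multiplicative (Littlewood--Richardson for generic restriction). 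So for $\ell(\mu)\ge2$ the class of $T_{w_\mu}$ is a product of positive-degree symmetric functions. Its power-sum expansion involves only $p_\rho$ with at least two parts, hence $\phi(T_{w_\mu})=0$.
\item For the Coxeter class, $\Tr_{\eta_j}(T_c)=(-1)^jq^{n-1-2j}$ gives $\phi(T_c)=\sum_jq^{n-1-2j}=\{n\}/z$, which vanishes at $\alpha$.
\end{itemize}
With this step inserted your argument becomes a complete proof, and it differs genuinely from the paper's. It needs neither Schur elements nor the root-of-unity evaluation of the $c_j$. It uses only perfectness of $t_{\HH}$, the integral cocenter basis, and the fact that $p_n$ is orthogonal to all products of positive-degree symmetric functions.
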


\begin{proof}
On a generic simple block, the Higman map is
$s_\lambda\operatorname{Tr}_\lambda$. The
Murnaghan--Nakayama rule \cite[Chapter~I, Section~7]{Macdonald}
says that the $n$-cycle character vanishes off hooks and is
$(-1)^j$ on $\eta_j$. Since $b_n$ closes to $\{n\}P_n$,
\[
 \operatorname{Tr}_{\eta_j}(b_n)=\{n\}(-1)^j,
 \qquad
 \operatorname{Tr}_\lambda(b_n)=0\quad
 (\lambda\text{ not a hook}).
\]
Writing $p_j$ for the generic central hook projectors gives
\begin{equation}\label{eq:central-higman}
 z^n h_{\HH}(b_n)
 =\{n\}^2\sum_{j=0}^{n-1}(-1)^jc_jp_j.
\end{equation}

An individual $p_j$ may have a simple pole. The projector
formula
\[
 p_\lambda=s_\lambda^{-1}
       \sum_{w\in S_n}\operatorname{Tr}_\lambda(T_w)T_{w^{-1}}
\]
has integral character coefficients. A hook of size $n$
has exactly one hook length $n$; a nonhook has none.
Thus $\{n\}p_j$ is regular, and the nonhook projectors
are regular. The total hook projector is regular as well.

Let $\omega=\alpha^n\in\{1,-1\}$. Successive ratios give
$c_{j+1}(\alpha)/c_j(\alpha)=-1$. The primitive product
identity gives
\[
 c_0(\alpha)
 =\alpha^{n(n-1)}\prod_{r=1}^{n-1}(1-\alpha^{-2r})
 =\eps_n n.
\]
Here $\alpha^n=-1$ if $n$ is even, while $n-1$ is even if
$n$ is odd. Hence
\begin{equation}\label{eq:hook-coefficient-values}
 c_j(\alpha)=\eps_n n(-1)^j.
\end{equation}
We may therefore write
\[
 \sum_j(-1)^jc_jp_j
 =\eps_n n\sum_jp_j
  +\sum_j\bigl((-1)^jc_j-\eps_n n\bigr)p_j.
\]
The first term is regular. Each coefficient in the second
sum is divisible by $\{n\}$, so each summand is a regular
multiple of $\{n\}p_j$. The bracket in
\eqref{eq:central-higman} is regular, and $z$ is a unit.
This proves the square factor.
\end{proof}

\begin{lemma}\label{lem:transported-higman-square}
Let
\[
 \theta(g_i)=-g_i^{-1}=z-g_i,
 \qquad t_{\HH}^-:=t_{\HH}\circ\theta,
\]
and let $h_{\HH}^-$ be the Higman map defined by the transported symmetrizing trace $t_{\HH}^-$.  Then
\begin{equation}\label{eq:transported-higman-square}
 h_{\HH}^-(b_n)\in\{n\}^2\HH.
\end{equation}
More precisely, over the generic split semisimple field,
\begin{equation}\label{eq:transported-higman-hooks}
 z^n h_{\HH}^-(b_n)
 =\{n\}^2\sum_{j=0}^{n-1}(-1)^j c_j^-p_j,
 \qquad
 c_j^-=q^{-\kappa_j/2}\{n-j-1\}!\{j\}!.
\end{equation}
If $B\subset \HH$ is a proper standard Young subalgebra preserved by $\theta$ and $\mathcal D$ is the usual set of distinguished coset representatives, define the transported norm
\begin{equation}\label{eq:transported-norm}
 N_{B}^{\HH,-}(y)
 :=\sum_{x\in\mathcal D}\theta(T_{x^{-1}})\,y\,\theta(T_x).
\end{equation}
Then
\begin{equation}\label{eq:transported-transitivity}
 h_B^-N_B^{\HH,-}=h_{\HH}^-,
\end{equation}
and consequently
\begin{equation}\label{eq:transported-norm-div}
 N_B^{\HH,-}(b_n)\in[B,\HH]+\{n\}^2\HH.
\end{equation}
\end{lemma}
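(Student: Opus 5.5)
The plan is to reduce everything to Lemma~\ref{lem:higman-square} by conjugating with the algebra automorphism $\theta$. First we show that $\theta$ is an $R_\alpha$-algebra automorphism of $\HH$. It preserves the quadratic relation, since $(z-g_i)^2=1+z(z-g_i)$. It preserves the braid relations, since $g_i\mapsto -g_i^{-1}$ respects them. It is an involution. Hence $t_{\HH}^-=t_{\HH}\circ\theta$ is again a symmetrizing trace, and its dual bases are $\theta^{-1}(T_w),\theta^{-1}(T_{w^{-1}})$. Transporting the definition \eqref{eq:higman} gives the intertwining identity
\[
 h_{\HH}^-(y)=\theta^{-1}\bigl(h_{\HH}(\theta(y))\bigr),
\]
and the analogous identity holds for $h_B^-$ when $\theta(B)=B$. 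Since $\theta$ is an automorphism, it preserves $\{n\}^2\HH$, so \eqref{eq:transported-higman-square} reduces to showing $h_{\HH}(\theta(b_n))\in\{n\}^2\HH$.

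Next we identify $\theta(b_n)$. Because $\theta(g_i)=-g_i^{-1}$, applying $\theta$ to the Hecke word in \eqref{eq:newton-coupon} gives $(-1)^{n-1}$ times the mirror word, with every $g$ inverted, and $\theta$ leaves the prefactor $z$ unchanged. We then rerun the proof of Lemma~\ref{lem:higman-square} with this element in place of $b_n$. On the generic block $\lambda$ the map $h_{\HH}$ acts by $s_\lambda\operatorname{Tr}_\lambda$. The trace $\operatorname{Tr}_\lambda(\theta(b_n))$ equals $\operatorname{Tr}_{\lambda^t}(b_n)$ evaluated at the mirrored parameter, up to the sign $(-1)^{n-1}$, because $\theta$ permutes the simple modules by $\lambda\mapsto\lambda^t$ and the word involves inverted generators. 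So it is still supported on hooks with values $\pm\{n\}$. Equivalently, and more cleanly, we compute $\operatorname{Tr}_\lambda\circ\theta$ on the element directly: $\theta$ sends $p_{\eta_j}$ to $p_{\eta_{n-1-j}}$, and the Schur element of the $t^-$-trace on block $\lambda$ is $s_{\lambda^t}$ read through $\theta$. Since $\kappa_{\lambda^t}=-\kappa_\lambda$ and the hook-length product is transpose-invariant, this yields exactly $c_j^-=q^{-\kappa_j/2}\{n-j-1\}!\{j\}!$ in \eqref{eq:transported-higman-hooks}, once all signs are tracked. The divisibility argument then goes through verbatim. At $q=\alpha$ we have $c_{j+1}^-/c_j^-=-1$, because $q^{n}$ evaluates to $\pm1$ and its inverse has the same value. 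The value $c_0^-(\alpha)$ is the complex conjugate of $c_0(\alpha)$, which is the real number $\eps_n n$. Hence $(-1)^jc_j^-(\alpha)=\eps_n n$ is constant, and the regularity of $\{n\}p_j$ and of $\sum_jp_j$ gives the square factor.

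For \eqref{eq:transported-transitivity} we repeat the argument for \eqref{eq:higman-transitivity} after applying $\theta$. Write $N_B^{\HH,-}=\theta^{-1}\circ N_B^{\HH}\circ\theta$, which holds because $\theta$ is an involution up to the indexing $x\leftrightarrow x^{-1}$. Then
\[
 h_B^-N_B^{\HH,-}=\theta^{-1}h_BN_B^{\HH}\theta=\theta^{-1}h_{\HH}\theta=h_{\HH}^-.
\]
Finally, $B$ is a proper Young subalgebra, so it has unit Schur elements, and this remains true for the transported trace because $\theta$ preserves $B$. Lemma~\ref{lem:higman} applied to $h_B^-$ and the bimodule $M=\HH$, together with $h_B^-(N_B^{\HH,-}(b_n))=h_{\HH}^-(b_n)\in\{n\}^2\HH$, gives \eqref{eq:transported-norm-div}. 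The main obstacle is the sign and normalization bookkeeping in the second step: confirming that the transported Schur elements produce $q^{-\kappa_j/2}$ rather than some other twist, and that the alternating pattern $(-1)^jc_j^-(\alpha)=\eps_n n$ survives. I would first test this on $n=2$ by explicit computation before writing the general argument.
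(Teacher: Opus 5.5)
Your proposal is correct and follows essentially the paper's route. Your conjugation identities $h_{\HH}^-=\theta h_{\HH}\theta$ and $N_B^{\HH,-}=\theta N_B^{\HH}\theta$ repackage the paper's transported Schur elements $s_\lambda^-=s_{\lambda^t}$ and its transported transitivity, and the same hook-projector regularity argument and Lemma~\ref{lem:higman} (with unit transported Schur elements on $B$) then finish the proof. There are two slips, both harmless for this lemma because only the constancy of $(-1)^jc_j^-(\alpha)$ in $j$ is used. First, $\operatorname{Tr}_\lambda\circ\theta=\operatorname{Tr}_{\lambda^t}$ holds exactly, with no mirrored parameter and no extra sign. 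Second, $c_0^-(\alpha)$ is not $\overline{c_0(\alpha)}$, since each $\{m\}(\alpha)$ is purely imaginary; in fact $(-1)^jc_j^-(\alpha)=n$ rather than $\eps_n n$ (for example, $c_0^-=1-q^{-2}$ takes the value $2$ when $n=2$), and this value $n$ is what the paper later needs for $\delta_j(\alpha)=n$.
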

\begin{proof}
The map $\theta$ is an involutive $R_\alpha$-algebra
automorphism. Thus $t_{\HH}^-$ is a perfect symmetric trace,
and $h_{\HH}^-$ is an $R_\alpha$-linear map $\HH\to \HH$.
Twisting a generic simple Hecke module by $\theta$ transposes
its partition label. The Schur element for $t_{\HH}^-$ on the
$\lambda$-block is consequently
\[
 s_\lambda^-=s_{\lambda^t}
 =q^{-\kappa_\lambda/2}
       \prod_{\square\in\lambda}\frac{\{h(\square)\}}z.
\]
The same $n$-cycle traces used above give
\eqref{eq:transported-higman-hooks}.

To descend this expression to the local lattice, note that
$c_j^-=q^{-\kappa_j}c_j$ and
$\alpha^{-\kappa_j}=\eps_n$. Indeed,
$\kappa_j=n(n-1-2j)$; the second factor is even for odd
$n$, and is odd for even $n$, when $\alpha^n=-1$.
Together with \eqref{eq:hook-coefficient-values}, this gives
\begin{equation}\label{eq:transported-hook-values}
 c_j^-(\alpha)=n(-1)^j,
 \qquad (-1)^jc_j^-(\alpha)=n.
\end{equation}
Let $p_{\mathrm{hook}}=\sum_jp_j$. As proved above,
$p_{\mathrm{hook}}$ and every $\{n\}p_j$ are regular.
The identity
\[
 \sum_j(-1)^jc_j^-p_j
 =np_{\mathrm{hook}}
  +\sum_j\bigl((-1)^jc_j^- -n\bigr)p_j
\]
therefore has a regular right side: every coefficient in
the second sum belongs to $\{n\}R_\alpha$. Since $z$ is
a unit, this proves \eqref{eq:transported-higman-square}.

For the norm statement, the transported coset systems are
dual because
\[
 t_{\HH}^-\bigl(\theta(T_x)\theta(T_{y^{-1}})\bigr)
 =t_{\HH}(T_xT_{y^{-1}}).
\]
Transporting the parabolic transitivity calculation proves
\eqref{eq:transported-transitivity}. The simple factors of a
proper Young subalgebra have size less than $n$, and their
transported Schur elements are units. Lemma~\ref{lem:higman},
applied with the transported trace, now gives
\eqref{eq:transported-norm-div}.
\end{proof}

In the ordinary cocenter $\overline{\HH}=\HH/[\HH,\HH]$, let $Q_j$
be the Schur class of shape $\eta_j$ and put
$f_j=\{n\}Q_j$. Here $Q_j$ is the class of a primitive
idempotent, so its ordinary trace on the corresponding
simple module is $1$.
The hook Jacobi--Trudi identity \cite[Chapter~I]{Macdonald}
gives
\[
 Q_j=\sum_{r=0}^j(-1)^rh_{n-j+r}e_{j-r}.
\]
The last summand is $(-1)^jh_n=(-1)^jQ_0$, the only one
containing a degree-$n$ complete symmetric function. All
remaining complete and elementary factors have degree
less than $n$ and are regular. Multiplying by $\{n\}$ gives
\begin{equation}\label{eq:hook-lattice}
 f_j=(-1)^jf_0+\{n\}r_j,\qquad r_j\in\overline{\HH}.
\end{equation}
In particular, every $f_j$ is regular.

We also record the logarithmic-derivative cancellation.
Put
\[
 L_j=\frac{q\,d\{j\}/dq}{\{j\}}
    =j\frac{q^j+q^{-j}}{q^j-q^{-j}}\qquad(0<j<n).
\]
At $q=\alpha$, the relations
$\alpha^{n-j}=\alpha^n\alpha^{-j}$ and $\alpha^n=\pm1$
give
\[
 L_{n-j}=-\frac{n-j}{j}L_j.
\]
Pairing $j$ with $n-j$, and using
$\kappa_j=n(n-1-2j)$, yields
\begin{equation}\label{eq:derivative-cancel}
 \sum_{j=0}^{n-1}\kappa_j=0,
 \qquad
 \left.\sum_{j=1}^{n-1}(n-j)L_j\right|_{q=\alpha}=0.
\end{equation}

\subsection{Mixed crossings}
\begin{lemma}\label{lem:full-swap}
Let $\overline{\HH}=\HH/[\HH,\HH]$, let $M_{\mathrm{sw}}=ACA$ be the full-swap double-coset bimodule, and let
\[
 \varsigma:\HH\otimes \HH\longrightarrow \HH\otimes \HH,
 \qquad \varsigma(x\otimes y)=y\otimes x.
\]
Then multiplication by $C$ identifies $M_{\mathrm{sw}}$ with the twisted $A$-bimodule ${}_1A_\varsigma$, and the map
\begin{equation}\label{eq:full-swap-map}
 \Phi_{\mathrm{sw}}:
 M_{\mathrm{sw}}/[A,M_{\mathrm{sw}}]\longrightarrow\overline{\HH},
 \qquad [x_Ly_RC]\longmapsto[xy]
\end{equation}
is an $R_\alpha$-module isomorphism.  Under this isomorphism the full-matching component of
\[
 b_Lb_R(C^2-1)-\eps_n n\{n\}^2b_LC
\]
is
\begin{equation}\label{eq:full-swap-explicit}
 \{n\}^2\sum_{j=0}^{n-1}
 \bigl(c_j-\eps_n n(-1)^j\bigr)f_j.
\end{equation}
\end{lemma}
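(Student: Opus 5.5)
The plan has three parts: identify the full-swap bimodule with a twisted diagonal bimodule, compute the relative cocenter of $M_{\mathrm{sw}}$, and then evaluate the full-matching component of $b_Lb_R(C^2-1)-\eps_n n\{n\}^2b_LC$ on hook blocks.

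\emph{Step 1: the identification.} By Lemma~\ref{lem:full-grid} the positive block exchange satisfies the transport rule $a_R C=C a_L$, with $C$ invertible. Since $C$ is invertible and $ACA=CA=AC$, the map $x\mapsto xC$ identifies $A$ with $M_{\mathrm{sw}}$. Under this identification the left $A$-action is ordinary multiplication, and the right action is twisted by $\varsigma$, because
\[
xC\cdot a=x\,\varsigma(a)\,C .
\]
So $M_{\mathrm{sw}}\cong{}_1A_\varsigma$.

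\emph{Step 2: the relative cocenter.} For a twisted bimodule, the quotient ${}_1A_\varsigma/[A,{}_1A_\varsigma]$ is spanned by the relations $[(x\otimes y)w]=[w\,\varsigma(x\otimes y)]$. Taking $w=u\otimes v$, the class $[xu\otimes yv]$ equals $[ux'\otimes vy']$ with the two packet factors swapped. Iterating this relation identifies $[x\otimes y]$ with $[xy\otimes 1]$ and imposes exactly the commutator relations of $\HH$. Hence the quotient is $\HH\otimes_{\HH^e}{}$ (the swap) $\cong\HH/[\HH,\HH]$.

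Concretely, I will define $\Phi_{\mathrm{sw}}$ by $[x_Ly_RC]\mapsto[xy]$, check that it kills commutators, and exhibit the inverse $[x]\mapsto[x_LC]$. The key check is that $[x_Ly_RC]=[(xy)_LC]$ in the quotient. This follows from
\[
y_R\,x_L C=x_L\,(y_RC)=x_L\,C\,y_L ,
\]
after cycling $y_L$ to the left. Freeness over $R_\alpha$ follows from the Geck--Pfeiffer integral cocenter basis.

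\emph{Step 3: the explicit formula.} By Lemmas~\ref{lem:full-grid} and~\ref{lem:shuffle}, the full-matching part of $C^2$ is $z^nR_n=z^n\Omega_{\HH}C$. The lower $R_k$ lie in other double cosets, and the term $1$ lies in the identity coset. So the full-swap components are
\[
z^n b_Lb_R\,\Omega_{\HH}C \qquad\text{and}\qquad -\eps_n n\{n\}^2 b_LC .
\]

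Applying $\Phi_{\mathrm{sw}}$, I slide $\Omega_{\HH}=\sum_w T_{w,L}T_{w^{-1},R}$ through $b_R$ and obtain
\[
\Big[\textstyle\sum_w bT_wbT_{w^{-1}}\Big]=[b\,h_{\HH}(b)] .
\]
By~\eqref{eq:central-higman}, $z^nh_{\HH}(b)=\{n\}^2\sum_j(-1)^jc_jp_j$. Multiplying by $b$ and using $\operatorname{Tr}_{\eta_j}(b)=\{n\}(-1)^j$, the class of $bp_j$ in the cocenter is $(-1)^j\{n\}Q_j=(-1)^jf_j$. This gives
\[
z^n\,[b\,h_{\HH}(b)]=\{n\}^2\sum_j c_jf_j ,
\]
after the two signs $(-1)^j$ cancel.

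For the second component, the class of $b$ in the cocenter is
\[
\{n\}P_n=\{n\}\sum_j(-1)^jQ_j=\sum_j(-1)^jf_j ,
\]
since $\chi_{\eta_j}((n))=(-1)^j$. Subtracting gives~\eqref{eq:full-swap-explicit}.

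\emph{Main obstacle.} I expect the hardest point to be justifying the cocenter computations integrally over $R_\alpha$ rather than generically. The individual $p_j$ have poles, so I will do the computation over the generic field and then use the regularity of the $f_j$ from~\eqref{eq:hook-lattice}. The freeness of $\overline{\HH}$ then lets me conclude that the identity holds in the lattice.
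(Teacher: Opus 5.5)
Your proposal is correct and is essentially the paper's proof. It identifies $M_{\mathrm{sw}}/[A,M_{\mathrm{sw}}]$ with the twisted coinvariants of $A$, with inverse $[h]\mapsto[h_LC]$, and evaluates $z^n[b\,h_{\HH}(b)]=\{n\}^2\sum_jc_jf_j$ via \eqref{eq:central-higman} and $[p_jb]=(-1)^jf_j$. It then descends from the generic field using the regularity of the $f_j$ and the freeness of the integral cocenter, exactly as the paper does. One small slip: your displayed sliding $x_Ly_RC=x_LCy_L$ actually gives $[x_Ly_RC]=[(yx)_LC]$. To also get $[(xy)_LC]$, which together with the first identity is exactly what makes $[h]\mapsto[h_LC]$ well defined on $\overline{\HH}$, cycle $y_R$ to the right instead, using $Cy_R=y_LC$.
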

\begin{proof}
The relations $Ca=\varsigma(a)C$ for $a\in A$ give $ACA=AC$ and identify $u\in A$ with $uC\in M_{\mathrm{sw}}$.  Under this identification,
\[
 [A,M_{\mathrm{sw}}]
 \longleftrightarrow
 \left\langle au-u\varsigma(a):a,u\in A\right\rangle_{R_\alpha}.
\]
Thus the relative cocenter is the twisted coinvariant module
\[
 A_\varsigma:=A/\left\langle au-u\varsigma(a)\right\rangle_{R_\alpha}.
\]
Let $m:A=\HH\otimes \HH\to\overline{\HH}$ be $m(x\otimes y)=[xy]$.  For
$a=a_1\otimes a_2$ and $u=u_1\otimes u_2$,
\[
 m(au-u\varsigma(a))
 =[a_1u_1a_2u_2-u_1a_2u_2a_1]=0,
\]
so $m$ descends to $\Phi_{\mathrm{sw}}$.  It is surjective.  Define
\[
 s:\overline{\HH}\longrightarrow A_\varsigma,
 \qquad [h]\longmapsto[h\otimes1].
\]
In $A_\varsigma$ the defining relations give
\[
 [x\otimes y]=[xy\otimes1]=[1\otimes yx]=[yx\otimes1].
\]
Hence $s$ kills commutators and is well defined.  The same identities show
$\Phi_{\mathrm{sw}}\circ s=\mathrm{id}_{\overline{\HH}}$ and
$s\circ\Phi_{\mathrm{sw}}=\mathrm{id}_{A_\varsigma}$.  Therefore \eqref{eq:full-swap-map} is bijective.

By Lemma~\ref{lem:full-grid}, the full-matching part of $C^2-1$ is $z^nR_n=z^n\Omega_{\HH}C$.  Hence, using $R_n=\Omega_{\HH}C$ and the definition of the Higman map,
\begin{align*}
 \Phi_{\mathrm{sw}}\bigl(z^n b_Lb_RR_n\bigr)
 &=z^n\left[\sum_{w\in S_n}bT_wbT_{w^{-1}}\right]\\
 &=z^n[b\,h_{\HH}(b)].
\end{align*}
In the generic cocenter, \eqref{eq:central-higman}  gives
\[
 z^n h_{\HH}(b)=\{n\}^2\sum_{j=0}^{n-1}(-1)^jc_jp_j.
\]
The annular closure of $b$ is
\[
 B_n=\{n\}P_n=\{n\}\sum_{j=0}^{n-1}(-1)^jQ_j,
\]
so the $\eta_j$-block of the cocenter class of $b$ is
\[
 [p_jb]=(-1)^j\{n\}Q_j=(-1)^jf_j.
\]
Consequently
\begin{equation}\label{eq:full-first-contribution}
 \Phi_{\mathrm{sw}}\bigl(z^n b_Lb_RR_n\bigr)
 =\{n\}^2\sum_{j=0}^{n-1}c_jf_j.
\end{equation}
Similarly,
\[
 \Phi_{\mathrm{sw}}(b_LC)=[b]
 =\sum_{j=0}^{n-1}(-1)^jf_j.
\]
Subtracting $\eps_n n\{n\}^2\Phi_{\mathrm{sw}}(b_LC)$ from \eqref{eq:full-first-contribution} proves \eqref{eq:full-swap-explicit}.  The calculation was written using generic central projectors only to separate simple blocks.  Both sides of \eqref{eq:full-swap-explicit} are regular, and the integral type-$A$ cocenter is free over the local ring; hence the equality descends to the regular cocenter lattice.
\end{proof}

\begin{proof}[Proof of the mixed-crossing identity in Theorem~\ref{thm:relative-hecke}]
Use \eqref{eq:full-grid}.  For $0<k<n$, the $k$th double-coset component of $b_Lb_R(C^2-1)$ is $z^kb_Lb_RR_k$.  By the shuffle formula and cyclic sliding in $\GG/[A,\GG]$,
\[
 [b_Lb_RR_k]
 =\bigl[N_{B_L}^\HH(b)_L\,N_{B_R}^\HH(b)_R\,\Delta_k\bigr].
\]
Transitivity and Lemmas \ref{lem:higman}, \ref{lem:higman-square} imply
\[
 N_{B_L}^\HH(b)\equiv\{n\}^2y_L\pmod{[B_L,\HH]},\qquad
 N_{B_R}^\HH(b)\equiv\{n\}^2y_R\pmod{[B_R,\HH]}.
\]
Because $\Delta_k$ commutes with $B_k$, each replacement is legal in the relative quotient.  Hence every proper component is divisible by $\{n\}^4$.

The bottom component is zero.  By Lemma~\ref{lem:full-swap}, the full-swap component is
\begin{equation}\label{eq:mixed-top}
 \{n\}^2\sum_{j=0}^{n-1}a_jf_j,
 \qquad
 a_j=c_j-\eps_n n(-1)^j.
\end{equation}
Each $a_j$ vanishes at $\alpha$, hence $a_j\in\{n\}R_\alpha$.  Moreover
\[
 \sum_j(-1)^ja_j\in\{n\}^2R_\alpha.
\]
Indeed its value is zero by $c_j(\alpha)=\eps_n n(-1)^j$, and its first $q$-derivative is zero because
\[
 \sum_j\frac{q c_j'}{c_j}
 =\frac12\sum_j\kappa_j+2\sum_{j=1}^{n-1}(n-j)L_j=0
\]
by \eqref{eq:derivative-cancel}.  Substituting \eqref{eq:hook-lattice} gives the explicit decomposition
\[
 \{n\}^2\sum_ja_jf_j
 =\{n\}^2\left(\sum_j(-1)^ja_j\right)f_0
 +\{n\}^3\sum_ja_jr_j,
\]
and both terms lie in $\{n\}^4\overline{\HH}$.  Since $\Phi_{\mathrm{sw}}$ is an $R_\alpha$-module isomorphism, its inverse carries $\{n\}^4\overline{\HH}$ onto $\{n\}^4(M_{\mathrm{sw}}/[A,M_{\mathrm{sw}}])$.  Hence the original full-swap class lies in
\[
 [A,M_{\mathrm{sw}}]+\{n\}^4M_{\mathrm{sw}}
 \subset [A,\GG]+\{n\}^4\GG.
\]
This is precisely the divisibility needed for the top component.
\end{proof}

\subsection{Self crossings}

\begin{lemma}\label{lem:self-full}
After applying $\theta$ to the full-grid identity, the
full-matching contribution to the self remainder, viewed
in $\overline{\HH}\otimes\overline{\HH}$, is, apart from the
factor $\eps_n$,
\begin{equation}\label{eq:self-full-tensor}
 \sum_{j=0}^{n-1}\delta_jf_j\otimes f_j
 -\left(\sum_{j=0}^{n-1}(-1)^jf_j\right)^{\otimes2},
 \qquad
 \delta_j=(-1)^jc_j^-.
\end{equation}
\end{lemma}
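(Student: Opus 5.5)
The plan mirrors the proof of Lemma~\ref{lem:full-swap}, with the swap bimodule replaced by the packet-preserving full-matching bimodule and the ordinary trace replaced by the transported one.

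\textbf{Step 1: isolate the full-matching part.} The self remainder is $S_n=b_L(C-C^{-1})-\eps_n b_Lb_R$. Write
\[
C-C^{-1}=C^{-1}(C^2-1).
\]
Apply $\theta$ to the full-grid identity \eqref{eq:full-grid}. Since $\theta(C)=\pm C^{-1}$ up to the sign $(-1)^{n^2}$, and since $\theta(z)=z$ is fixed, this expresses $C^{-2}$, and hence $C^{-1}(C^2-1)$, as a sum over partial matchings. The full-matching term is
\[
z^n\theta(R_n)=z^n\theta(\Omega_{\HH})\,\theta(C).
\]
Combining it with the remaining factor $C$ gives a packet-preserving element. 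Explicitly,
\[
\sum_{w\in S_n}\theta(T_{w,L})\,\theta(T_{w^{-1},R}),
\]
multiplied by $\theta(C)C$, which is a unit monomial equal to $\pm1$ times the identity on the top double coset.

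\textbf{Step 2: identify the relative cocenter.} The packet-preserving full-matching class lives in $A/[A,A]\cong\overline{\HH}\otimes\overline{\HH}$. So, exactly as in Lemma~\ref{lem:full-swap}, we obtain the class
\[
z^n\sum_w\bigl[b\,\theta(T_w)\bigr]\otimes\bigl[\theta(T_{w^{-1}})\bigr]
\]
from $b_L$ against the transported Casimir. To evaluate it, decompose into generic central hook projectors. The transported Casimir is the Casimir for the trace $t_{\HH}^-$, whose dual bases are $\theta(T_w),\theta(T_{w^{-1}})$. Its Casimir element therefore acts on the $\lambda$-block with the transported Schur element $s_\lambda^-$.

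\textbf{Step 3: compute block by block.} Pairing $b$ in the $\eta_j$-block gives $\operatorname{Tr}_{\eta_j}(b)=(-1)^j\{n\}$. Using the computation behind \eqref{eq:transported-higman-hooks}, the full-matching term contributes
\[
\sum_j(-1)^jc_j^-f_j\otimes f_j=\sum_j\delta_jf_j\otimes f_j .
\]
Here one factor $f_j=\{n\}Q_j$ absorbs the $\{n\}$ from the trace of $b$, and the other comes from the projector $p_j$ paired against the second packet. The sign $\eps_n$ arises from $\theta(C)C$ and the orientation reversal in $C^{-1}$.

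\textbf{Step 4: the subtracted term and descent.} The term $\eps_n b_Lb_R$ lies in the bottom double coset, not the top one. Its class in $\overline{\HH}\otimes\overline{\HH}$ is
\[
[b]\otimes[b]=\Bigl(\sum_j(-1)^jf_j\Bigr)^{\otimes 2},
\]
since $B_n=\{n\}P_n$. After the common factor $\eps_n$ is pulled out, this term appears with a minus sign, giving \eqref{eq:self-full-tensor}. Finally, both sides are regular by \eqref{eq:hook-lattice}, and the integral cocenter is free over $R_\alpha$. The generic identity therefore descends to the regular lattice, exactly as at the end of Lemma~\ref{lem:full-swap}.

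\textbf{Main obstacle.} The hardest step is the sign and normalization bookkeeping in Step 3. One must check that $\theta(C)C$ contributes exactly $\eps_n$ (using $\ell(d)=n^2$ and the $z^n$ normalization). One must also check that the transported Casimir yields $c_j^-$ rather than $c_j$. I would verify both first for $n=2$ directly in $\HH_4$.
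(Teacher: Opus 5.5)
Your proposal is correct and follows essentially the paper's argument: apply $\theta$ to \eqref{eq:full-grid}, note that the $k=n$ term $C\theta(R_n)=(-1)^n\Omega_{\HH}^-$ returns to $A$, pass to $A/[A,A]\cong\overline{\HH}\otimes\overline{\HH}$, evaluate blockwise with the transported Schur elements, subtract $[b]\otimes[b]$, and descend by regularity and freeness; the return to $A$ uses that $\theta(\Omega_{\HH})$ is invariant under packet exchange, which you should state. The one place to tighten is Step~3: on the $\lambda$-block the transported Casimir is $s_\lambda^-\sum_{u,v}E_{uv}\otimes E_{vu}$, so inserting $b$ gives $s_\lambda^-\operatorname{Tr}_\lambda(b)\,Q_\lambda\otimes Q_\lambda$ with no dimension factor, and the second $\{n\}$ is the hook length $n$ inside $z^ns_{\eta_j}^-=\{n\}c_j^-$, not something coming from $p_j$ (replacing the central idempotent $p_j$ by its cocenter class would wrongly introduce $\dim\eta_j=\binom{n-1}{j}$).
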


\begin{proof}
Since $\theta(C)=(-1)^nC^{-1}$, applying $\theta$ to
\eqref{eq:full-grid} gives
\begin{equation}\label{eq:inverse-grid}
 C-C^{-1}=-\sum_{k=1}^nz^kC\theta(R_k).
\end{equation}
As $R_n=\Omega_{\HH}C$, the $k=n$ term returns to the identity
double-coset summand $A$. The quotient of that summand is
\[
 A/[A,A]\cong\overline{\HH}\otimes_{R_\alpha}\overline{\HH}.
\]
Indeed,
$[A,A]=[\HH,\HH]\otimes \HH+\HH\otimes[\HH,\HH]$, as follows by
expanding commutators of pure tensors and by setting one
factor equal to $1$ for the converse inclusion.

Conjugating the transported Casimir through the block
exchange gives, up to $\eps_n$,
\[
 z^n\sum_{w\in S_n}b\theta(T_{w^{-1}})\otimes\theta(T_w).
\]
We evaluate this tensor over the generic split semisimple
field. On the block of shape $\lambda$, the transported
trace has Schur element $s_\lambda^-$, and its Casimir is
\[
 s_\lambda^-\sum_{u,v}E_{uv}\otimes E_{vu}.
\]
With $b$ inserted in the first factor, projection to the
tensor cocenter gives
\[
 s_\lambda^-\operatorname{Tr}_\lambda(b)
                      Q_\lambda\otimes Q_\lambda.
\]
There is no dimension factor, since $Q_\lambda$ is the
class of a primitive idempotent. The second trace forces
$u=v$, and the first then sums the diagonal entries of $b$.

The $n$-cycle trace vanishes off hooks. On $\eta_j$,
\[
 \operatorname{Tr}_{\eta_j}(b)=\{n\}(-1)^j,
 \qquad
 s_{\eta_j}^-=q^{-\kappa_j/2}
       \frac{\{n\}\{n-j-1\}!\{j\}!}{z^n}.
\]
Multiplication by $z^n$ and $f_j=\{n\}Q_j$ give precisely
$\delta_jf_j\otimes f_j$. The unmixed product has class
\[
 [b]\otimes[b]
 =\left(\sum_j(-1)^jf_j\right)^{\otimes2}.
\]
This proves \eqref{eq:self-full-tensor} generically. Its two
sides are regular, and the ordinary cocenter and its
tensor square are free over $R_\alpha$. The equality
therefore descends to the local tensor lattice.
\end{proof}

We record the support of a proper core. At the permutation
level put $w_k=dd_k$. Since $d_k$ is an involution with
length $k^2$, direct inversion counting gives
\begin{equation}\label{eq:self-core-length}
 \ell(w_k)=\ell(d)-\ell(d_k)=n^2-k^2.
\end{equation}
Thus $d=w_kd_k$ is length-additive, and
$CT_{d_k}^{-1}=T_{w_k}$. Since $\Omega_k\in A$,
\begin{equation}\label{eq:self-proper-core}
 C\theta(\Delta_k)
 =(-1)^{\ell(d_k)}\varsigma(\theta(\Omega_k))T_{w_k}
 \in AT_{w_k}A.
\end{equation}
Let $s_k$ be the minimal representative of the
$W_A$ double coset of $w_k$; see \cite[Sections~2.1.6--2.1.9]{GP}
and \cite[Section~5]{Hahn}. Write
\[
 w_k=r_ks_kt_k,\qquad r_k,t_k\in W_A,
 \qquad
 \ell(w_k)=\ell(r_k)+\ell(s_k)+\ell(t_k).
\]
With
\begin{equation}\label{eq:self-core-factors}
 U_k=(-1)^{\ell(d_k)}\varsigma(\theta(\Omega_k))T_{r_k},
 \qquad V_k=T_{t_k},
\end{equation}
one has
\begin{equation}\label{eq:self-core-factorization}
 C\theta(\Delta_k)=U_kT_{s_k}V_k.
\end{equation}
Because $\theta$ preserves the packet algebras, all outer
shuffle factors lie in $A$. Hence
\begin{equation}\label{eq:self-proper-support}
 C\theta(R_k)\in AT_{s_k}A=:M_k.
\end{equation}
In particular, applying $\theta$ does not move a proper
core into another double-coset bimodule.

The corresponding tensor presentation can be described
explicitly. With $m=n-k$, the minimal representative $s_k$
exchanges the two middle blocks in the ordered block
decomposition $(k,m,m,k)$ and fixes the outer two blocks.
Thus
\[
 J_k:=A\cap T_{s_k}AT_{s_k}^{-1}
       =(B_R)_L\otimes(B_L)_R.
\]
Indeed, minimality makes both $AT_{s_k}$ and $T_{s_k}A$
standard-basis spans, and their intersection is indexed by
$W_A\cap s_kW_As_k^{-1}$. Put
\[
 \varphi_k(j)=T_{s_k}^{-1}jT_{s_k}.
\]
Conjugation exchanges the two middle $\HH_m$ factors of $J_k$
and fixes its outer $\HH_k$ factors. In particular,
$\varphi_k(J_k)=J_k$, and
\[
 jT_{s_k}=T_{s_k}\varphi_k(j)\qquad(j\in J_k).
\]
The length-additive double-coset decomposition gives the
$A$-bimodule presentation
\[
 M_k\cong A\otimes_{J_k,\varphi_k}A,
 \qquad a\otimes c\longmapsto aT_{s_k}c,
\]
where
\[
 aj\otimes c=a\otimes\varphi_k(j)c\qquad(j\in J_k).
\]
The map respects this relation by the displayed conjugation
identity. Distinguished coset representatives give bases on
both sides, and their products with $T_{s_k}$ are
length-additive, proving the isomorphism.

For completeness, the degree-zero Shapiro calculation for
such a twisted tensor product is
\[
 \begin{aligned}
 \frac{A\otimes_{J,\varphi}A}{[A,A\otimes_{J,\varphi}A]}
 &\cong
 \frac{A}{\langle\varphi(j)x-xj:j\in J,\ x\in A\rangle_{R_\alpha}},\\
 [a\otimes c]&\longmapsto[ca].
 \end{aligned}
\]
The balancing relation maps to
$caj-\varphi(j)ca$, and an $A$-commutator maps to zero.
The inverse sends $[x]$ to $[1\otimes x]$: balancing and
cyclicity identify $[1\otimes\varphi(j)x]$ with
$[j\otimes x]=[1\otimes xj]$, while cyclicity also gives
$[a\otimes c]=[1\otimes ca]$.

Write $\mathcal D_k=\mathcal D_L\times\mathcal D_R$,
and define the individual kernels
\begin{equation}\label{eq:self-individual-kernel}
 E_{x_L,k}=\theta(T_{x_L,R})C\theta(\Delta_k)
                                      \theta(T_{x_L^{-1},L}).
\end{equation}
Their factorizations are
\begin{equation}\label{eq:self-individual-factorization}
 \begin{aligned}
 E_{x_L,k}&=A_{x_L}T_{s_k}C_{x_L},\\
 A_{x_L}&=\theta(T_{x_L,R})U_k,\qquad
 C_{x_L}=V_k\theta(T_{x_L^{-1},L}).
 \end{aligned}
\end{equation}
Under the tensor presentation, their transfer has the lift
\[
 \widetilde S_k(h)
 =\sum_{x_L\in\mathcal D_L}h_LA_{x_L}\otimes C_{x_L}
       \in A\otimes_{J_k,\varphi_k}A.
\]
Multiplication sends this lift to
$\sum_{x_L}h_LE_{x_L,k}$. Its relative-cocenter class is
the map $S_k$ defined below. The well-definedness on
$\HH/[B_R,\HH]$ will follow from relative Frobenius duality.

Block transport and the commutation of $\Delta_k$ with
$(B_R)_R$ give the pointwise relation
\begin{equation}\label{eq:self-pointwise-intertwining}
 h_LE_{x_L,k}=E_{x_L,k}h_R\qquad(h\in B_R).
\end{equation}
The same relation holds after summing the kernels over
$\mathcal D_L$.

\begin{lemma}\label{lem:self-proper}
For $0<k<n$, the formula
\begin{equation}\label{eq:self-transfer-map}
 \begin{aligned}
 S_k&:\HH/[B_R,\HH]\longrightarrow M_k/[A,M_k],\\
 S_k([h])&=\sum_{x_L\in\mathcal D_L}[h_LE_{x_L,k}]
 \end{aligned}
\end{equation}
defines an $R_\alpha$-linear map. The analogous right-handed
map, with $B_L$ and $\mathcal D_R$, is also well defined.
Moreover, if
\begin{equation}\label{eq:self-transported-norm}
 N_{B_k}^{A,-}(y)=\sum_{x\in\mathcal D_k}
                 \theta(T_{x^{-1}})y\theta(T_x),
\end{equation}
then
\begin{equation}\label{eq:self-norm-square}
 N_{B_k}^{A,-}(b_LC)\in[B_k,\GG]+\{n\}^2\GG,
\end{equation}
and
\begin{equation}\label{eq:self-proper-divisibility}
 b_LC\theta(R_k)\in[A,\GG]+\{n\}^2\GG.
\end{equation}
\end{lemma}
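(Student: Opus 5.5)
We treat the three assertions of Lemma~\ref{lem:self-proper} in turn: well-definedness of $S_k$, then the transported-norm divisibility \eqref{eq:self-norm-square}, then the proper-core divisibility \eqref{eq:self-proper-divisibility}. Only the last step requires real care.

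For well-definedness of $S_k$, use the lift $\widetilde S_k(h)\in A\otimes_{J_k,\varphi_k}A$ and the pointwise intertwining \eqref{eq:self-pointwise-intertwining}. Fix $h\in\HH$ and $g\in B_R$; we must show that $S_k$ kills $[gh-hg]$. Write $(gh)_L E=g_L h_L E$, and cycle $g_L\in A$ to the right inside $M_k/[A,M_k]$. Then use $E_{x_L,k}g_R=g_L E_{x_L,k}$, read backwards, to return $g$ to the left. This gives $[g_Lh_LE]=[h_LE\,g_R]=[h_Lg_LE]$. Summing over $x_L\in\mathcal D_L$ and using that the relation survives the sum shows the commutator is killed. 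This is the relative Frobenius duality alluded to above. The right-handed map is handled by the mirror argument with $B_L$ and $\mathcal D_R$.

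For \eqref{eq:self-norm-square}, the norm $N_{B_k}^{A,-}$ factors as the tensor product of the transported norms $N_{B_L}^{\HH,-}\otimes N_{B_R}^{\HH,-}$ on the two packets, since $\mathcal D_k=\mathcal D_L\times\mathcal D_R$. The element $b_L$ sits only in the left packet. Move $C$ through the right-packet factors by block transport, so that those factors become left-packet factors conjugating $b$. Then \eqref{eq:transported-norm-div} gives $N_{B}^{\HH,-}(b)\in[B,\HH]+\{n\}^2\HH$ for the relevant Young subalgebra $B$. Multiplying by $C$ and the remaining outer factors, a $B$-commutator becomes a $B_k$-commutator in $\GG$. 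The admissible moves here are exactly the commutations that \eqref{eq:self-pointwise-intertwining} and block transport allow; no ideal property of $[B_k,\GG]$ is used.

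For \eqref{eq:self-proper-divisibility}, expand $C\theta(R_k)$ using the shuffle formula \eqref{eq:shuffle} with $\theta$ applied. Cyclically slide the outer $\mathcal D_R$ factors to the other side modulo $[A,\GG]$. This is legitimate because they lie in $A$ and $b_L$ commutes past right-packet factors. The class $[b_LC\theta(R_k)]$ becomes $S_k$ applied to the class of the transported left norm of $b$, or equivalently $[N_{B_k}^{A,-}(b_LC)\cdot(\text{core})]$ in $M_k/[A,M_k]$. Apply \eqref{eq:self-norm-square}. The $\{n\}^2\GG$ part stays divisible after multiplication by the regular core $C\theta(\Delta_k)$. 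The $[B_k,\GG]$ part is killed, because $\theta(\Delta_k)$ commutes with $\theta(B_k)=B_k$ (Lemma~\ref{lem:shuffle} transported) and $S_k$ is well defined on $B_R$-coinvariants.

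The main obstacle is this last bookkeeping step. We must check that every commutator produced by \eqref{eq:transported-norm-div} can genuinely be moved around the core, using only the twisted relation $jT_{s_k}=T_{s_k}\varphi_k(j)$ for $j\in J_k$ and the Shapiro identification of $M_k/[A,M_k]$ with $A/\langle\varphi_k(j)x-xj\rangle$. The first thing to try is to transport everything into that Shapiro quotient. There, $B_k$-commutators should visibly map into the $J_k$-twisted relations, because $B_k\subset J_k\cdot(\text{shuffle conjugates})$. If that fails, fall back on the Higman isomorphism of Lemma~\ref{lem:higman} for $B_k$ directly inside $M_k$.
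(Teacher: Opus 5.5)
Your last step is essentially the paper's argument: cycle the trailing shuffles to get $[b_LC\theta(R_k)]=[N^{A,-}_{B_k}(b_LC)\,\theta(\Delta_k)]$, then use that $\theta(\Delta_k)$ commutes with $B_k$, so that $[x,Y]\theta(\Delta_k)=[x,Y\theta(\Delta_k)]\in[A,\GG]$ for $x\in B_k$. No Shapiro-quotient bookkeeping is needed. Note that the core here is $\theta(\Delta_k)$, not $C\theta(\Delta_k)$, since $C$ already sits inside the norm. The real gaps are in the first two assertions.

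\emph{Well-definedness of $S_k$.} Your chain breaks at $[g_Lh_LE]=[h_LE\,g_R]$. Cyclicity in $M_k/[A,M_k]$ gives $[h_LE\,g_L]$, with $g$ still in the left packet. The pointwise relation \eqref{eq:self-pointwise-intertwining} converts $g_LE$ into $Eg_R$; it says nothing about $E\,g_L$. For a single kernel you are left with $\theta(T_{x_L^{-1}})g$ in the left packet. The missing ingredient is the full-packet intertwining $h_R\Xi_k=\Xi_kh_L$ for all $h\in\HH$, which holds only for the summed kernel $\Xi_k=\sum_{x_L\in\mathcal D_L}E_{x_L,k}$. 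It comes from relative Frobenius duality for $B_L\subset\HH$:
\begin{itemize}
\item the relative Casimir $\sum_x\theta(T_x)\otimes_{B_L}\theta(T_{x^{-1}})$ commutes with $\HH$;
\item the map $u\otimes v\mapsto u_R\,C\theta(\Delta_k)\,v_L$ is $B_L$-balanced, because $c_RC\theta(\Delta_k)=C\theta(\Delta_k)c_L$ for $c\in B_L$;
\item this map sends that Casimir to $\Xi_k$.
\end{itemize}
With both relations one gets
\[
 [g_Lh_L\Xi_k]=[h_L\Xi_kg_L]=[h_Lg_R\Xi_k]=[g_Rh_L\Xi_k]=[h_L\Xi_kg_R]=[h_Lg_L\Xi_k].
\]
So the sum over $\mathcal D_L$ is the essential input, not bookkeeping, and it uses $B_L$ as well as $B_R$. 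The same relation is also what makes the packet-transfer identity $[b_LC\theta(R_k)]=S_k([N^{\HH,-}_{B_R}(b)])$ work, which you invoke in the last step.

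\emph{The norm estimate \eqref{eq:self-norm-square}.} Block transport does not reduce the norm to a one-packet norm of $b$. With $x=x_L\oplus x_R$ one gets exactly
\[
 N^{A,-}_{B_k}(b_LC)=\sum_{x_L\in\mathcal D_L}\sum_{x_R\in\mathcal D_R}
 \bigl(\theta(T_{x_L^{-1}})\,b\,\theta(T_{x_R})\bigr)_L
 \bigl(\theta(T_{x_R^{-1}})\,\theta(T_{x_L})\bigr)_R\,C .
\]
Here $b$ is flanked by representatives from two different coset systems (for $S_m\times S_k$ and $S_k\times S_m$), and these are coupled through the right packet. This is not of the form $N^{\HH,-}_B(b)$, so \eqref{eq:transported-norm-div} does not apply. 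Nor can you repair it modulo $[B_k,\GG]$: the shuffle factors are not in $B_k$, and $[B_k,\GG]$ is not an ideal. Your claim that a $B$-commutator ``becomes a $B_k$-commutator'' after multiplying by outer factors is therefore unjustified.

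The paper instead computes the full transported Higman map of $A$. The Casimir identity $\Omega(1\otimes x)=(x\otimes 1)\Omega$ gives $h_A^-(b_LC)=(h_{\HH}^-(b)\otimes1)\Omega_{\HH}^-C$, which lies in $\{n\}^2\GG$ by Lemma~\ref{lem:transported-higman-square}. Then transitivity $h^-_{B_k}N^{A,-}_{B_k}=h^-_A$, together with Lemma~\ref{lem:higman} applied to $B_k$, gives \eqref{eq:self-norm-square}. Lemma~\ref{lem:higman} applies because all factors of $B_k$ have size less than $n$, so their transported Schur elements are units, and $\GG$ is free.
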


\begin{proof}
We first prove the assertion about $S_k$. Put
\[
 W_k=C\theta(\Delta_k),\qquad
 \Xi_k=\sum_{x_L\in\mathcal D_L}E_{x_L,k}.
\]
For $B=B_L$, let $\mathsf E_B:\HH\to B$ be the canonical
parabolic projection in the standard basis. Its transported
version is
$\mathsf E_B^-=\theta|_B\circ\mathsf E_B\circ\theta$.
The distinguished right coset systems
\[
 u_x=\theta(T_x),\qquad v_x=\theta(T_{x^{-1}}),
 \qquad x\in\mathcal D_L,
\]
are relative Frobenius dual bases. Explicitly, standard-basis
duality and the decomposition $w=xu$ give, for $h\in \HH$,
\begin{equation}\label{eq:self-relative-duality}
 h=\sum_xu_x\mathsf E_B^-(v_xh)
  =\sum_x\mathsf E_B^-(hu_x)v_x.
\end{equation}
The relative Casimir
\[
 \mathcal E_B^-=\sum_xu_x\otimes_Bv_x
                   \in \HH\otimes_B\HH
\]
therefore satisfies $h\mathcal E_B^-=\mathcal E_B^-h$
for every $h\in \HH$. To see this directly, use the two
expansions in \eqref{eq:self-relative-duality}:
\[
 \begin{aligned}
 h\mathcal E_B^-
 &=\sum_{x,y}u_y\mathsf E_B^-(v_yhu_x)\otimes_Bv_x\\
 &=\sum_yu_y\otimes_B
                \left(\sum_x\mathsf E_B^-(v_yhu_x)v_x\right)\\
 &=\sum_yu_y\otimes_Bv_yh=\mathcal E_B^-h.
 \end{aligned}
\]

For $c\in B_L$, block transport and commutation with
$\theta(\Delta_k)$ give $c_RW_k=W_kc_L$.
The map
\[
 \HH\otimes_{B_L}\HH\longrightarrow M_k,
 \qquad u\otimes v\longmapsto u_RW_kv_L
\]
is consequently balanced, and sends $\mathcal E_{B_L}^-$
to $\Xi_k$. The Casimir identity proves
\begin{equation}\label{eq:self-full-packet-intertwining}
 h_R\Xi_k=\Xi_kh_L\qquad(h\in \HH).
\end{equation}
On the other hand, \eqref{eq:self-pointwise-intertwining} gives
\begin{equation}\label{eq:self-proper-packet-intertwining}
 c_L\Xi_k=\Xi_kc_R\qquad(c\in B_R).
\end{equation}
For $c\in B_R$ and $h\in \HH$, cyclicity in $M_k/[A,M_k]$
and these two identities imply
\[
 \begin{aligned}
 [(c_Lh_L-h_Lc_L)\Xi_k]
 &=[h_L\Xi_kc_L-h_L\Xi_kc_R]\\
 &=[h_Lc_R\Xi_k-h_L\Xi_kc_R]\\
 &=[c_Rh_L\Xi_k-h_L\Xi_kc_R]=0.
 \end{aligned}
\]
The last expression is an $A$-commutator. Thus the map
factors through $\HH/[B_R,\HH]$, proving
\eqref{eq:self-transfer-map}. Interchanging the packets proves
the right-handed statement.

We prove the divisibility assertions on the full
$A$-bimodule $\GG$. For any symmetric trace on $\HH$, let
$e_i,e_i^*$ be dual bases and put
$\Omega=\sum_i e_i\otimes e_i^*$. Casimir duality gives
$\Omega(1\otimes x)=(x\otimes1)\Omega$. Consequently
\begin{equation}\label{eq:self-tensor-higman}
 \begin{aligned}
 \sum_{i,j}e_i^*be_j\otimes e_j^*e_i
 &=\sum_i(e_i^*b\otimes1)\Omega(1\otimes e_i)\\
 &=\sum_i(e_i^*be_i\otimes1)\Omega\\
 &=(h_{\HH}(b)\otimes1)\Omega.
 \end{aligned}
\end{equation}
This is an identity in the actual tensor algebra.

Apply it to the transported trace $t_{\HH}^-$ and write
$\Omega_{\HH}^-=\theta(\Omega_{\HH})$. The tensor-product trace
on $A$ has transported dual bases
$(e_i)_L(e_j)_R$ and $(e_i^*)_L(e_j^*)_R$. Block transport
therefore gives
\begin{equation}\label{eq:self-full-higman}
 \begin{aligned}
 h_A^-(b_LC)
 &=\sum_{i,j}(e_i^*)_L(e_j^*)_R\,b_LC(e_i)_L(e_j)_R\\
 &=\sum_{i,j}(e_i^*be_j)_L(e_j^*e_i)_R\,C\\
 &=(h_{\HH}^-(b)\otimes1)\Omega_{\HH}^-C.
 \end{aligned}
\end{equation}
Lemma~\ref{lem:transported-higman-square} proves that this belongs to
$\{n\}^2\GG$.

The traces on $B_k\subset A$ are compatible. Transporting
the right coset decomposition $w=xu$ gives
\[
 \begin{aligned}
 h_{B_k}^-N_{B_k}^{A,-}(y)
 &=\sum_{u\in W(B_k)}\sum_{x\in\mathcal D_k}
   \theta(T_{u^{-1}})\theta(T_{x^{-1}})
             y\theta(T_x)\theta(T_u)\\
 &=\sum_{w\in W(A)}\theta(T_{w^{-1}})y\theta(T_w)
  =h_A^-(y).
 \end{aligned}
\]
Every factor of $B_k$ has size $k$ or $n-k$, strictly
less than $n$. Its transported Schur elements are therefore
units. The module $\GG$ is free over $R_\alpha$. Applying
Lemma~\ref{lem:higman} to \eqref{eq:self-full-higman}
proves \eqref{eq:self-norm-square}.

Finally, apply $\theta$ to the shuffle formula and move
the trailing shuffle around the relative cocenter:
\begin{equation}\label{eq:self-pairing}
 \begin{aligned}
 [b_LC\theta(R_k)]
 &=\sum_{x\in\mathcal D_k}
   [b_LC\theta(T_x)\theta(\Delta_k)\theta(T_{x^{-1}})]\\
 &=\sum_{x\in\mathcal D_k}
   [\theta(T_{x^{-1}})b_LC\theta(T_x)\theta(\Delta_k)]\\
 &=[N_{B_k}^{A,-}(b_LC)\theta(\Delta_k)].
 \end{aligned}
\end{equation}
The cyclic step is valid because $\theta(T_{x^{-1}})\in A$.
Since $\theta$ preserves $B_k$, the element
$\theta(\Delta_k)$ commutes with $B_k$. Thus, for
$x\in B_k$ and $Y\in \GG$,
\[
 [x,Y]\theta(\Delta_k)=[x,Y\theta(\Delta_k)]\in[A,\GG].
\]
Substituting \eqref{eq:self-norm-square} into
\eqref{eq:self-pairing} proves
\eqref{eq:self-proper-divisibility}.

The map $S_k$ also records the one-packet norm transfer.
Expand the shuffle formula with
$x=x_L\oplus x_R$:
\[
 \begin{aligned}
 [b_LC\theta(R_k)]
 =\sum_{x_L\in\mathcal D_L}\sum_{x_R\in\mathcal D_R}
 \bigl[&b_LC\theta(T_{x_L,L})\theta(T_{x_R,R})\theta(\Delta_k)\\
      &\hspace{1em}\cdot\theta(T_{x_L^{-1},L})
                         \theta(T_{x_R^{-1},R})\bigr].
 \end{aligned}
\]
For each $x_R\in\mathcal D_R$, write
$u=\theta(T_{x_R})$ and $v=\theta(T_{x_R^{-1}})$.
After summing over $x_L$, its contribution is
$[b_Lu_L\Xi_kv_R]$. In the relative quotient,
\[
 \begin{aligned}
 [b_Lu_L\Xi_kv_R]
 &=[v_Rb_Lu_L\Xi_k]
  =[b_Lu_Lv_R\Xi_k]\\
 &=[b_Lu_L\Xi_kv_L]
  =[v_Lb_Lu_L\Xi_k],
 \end{aligned}
\]
where the middle step uses
\eqref{eq:self-full-packet-intertwining} for $v\in \HH$.
Summing over $x_R$ gives
\begin{equation}\label{eq:self-packet-norm}
 [b_LC\theta(R_k)]
 =\sum_{x_L\in\mathcal D_L}
       \bigl[(N_{B_R}^{\HH,-}(b))_LE_{x_L,k}\bigr]
\end{equation}
and hence
\begin{equation}\label{eq:self-packet-transfer}
 [b_LC\theta(R_k)]
 =S_k\bigl([N_{B_R}^{\HH,-}(b)]\bigr)
       \quad\text{in }M_k/[A,M_k].
\end{equation}
By \eqref{eq:transported-norm-div},
$[N_{B_R}^{\HH,-}(b)]=\{n\}^2[y_R]$ in $\HH/[B_R,\HH]$.
Thus the same proper component is $\{n\}^2S_k([y_R])$.
\end{proof}

\begin{proof}[Proof of the self identity in Theorem~\ref{thm:relative-hecke}]
By \eqref{eq:inverse-grid} and Lemma~\ref{lem:self-proper},
every proper component of $b_L(C-C^{-1})$ belongs to
$[A,\GG]+\{n\}^2\GG$. The remaining full-matching term returns
to $A$: indeed,
\[
 C\theta(R_n)=(-1)^n\Omega_{\HH}^-,
\]
because packet exchange preserves the symmetric tensor
$\Omega_{\HH}^-$. Hence the remaining self remainder is
represented by
\[
 \eps_n\bigl(z^nb_L\Omega_{\HH}^- -b_Lb_R\bigr).
\]
By Lemma~\ref{lem:self-full}, its class, apart from
$\eps_n$, is
\begin{equation}\label{eq:self-remaining}
 \sum_{j=0}^{n-1}\delta_jf_j\otimes f_j
 -\left(\sum_{j=0}^{n-1}(-1)^jf_j\right)^{\otimes2}.
\end{equation}

At the root, $\delta_j(\alpha)=n$ for every $j$.
One may see this either from \eqref{eq:transported-hook-values}
or from
\[
 \frac{\delta_{j+1}}{\delta_j}
 =-q^n\frac{\{j+1\}}{\{n-j-1\}},\qquad
 \delta_0(\alpha)
 =\prod_{r=1}^{n-1}(1-\alpha^{-2r})=n.
\]
Furthermore,
\begin{equation}\label{eq:self-delta-sum}
 \sum_j\delta_j-n^2\in\{n\}^2R_\alpha.
\end{equation}
The value at $\alpha$ is zero. The logarithmic derivatives
satisfy
\[
 \sum_j\frac{q\delta_j'}{\delta_j}
 =-\frac12\sum_j\kappa_j
   +2\sum_{r=1}^{n-1}(n-r)L_r,
\]
which vanishes at $\alpha$ by \eqref{eq:derivative-cancel}.
Since every $\delta_j(\alpha)=n$, the ordinary derivative
of $\sum_j\delta_j-n^2$ also vanishes, proving
\eqref{eq:self-delta-sum}.

Insert $f_j=(-1)^jf_0+\{n\}r_j$ into
\eqref{eq:self-remaining} and put
$r_* =\sum_j(-1)^jr_j$. The result is
\[
 \begin{aligned}
 &\left(\sum_j\delta_j-n^2\right)f_0\otimes f_0\\
 &\quad+\{n\}\sum_j(-1)^j(\delta_j-n)
                      (f_0\otimes r_j+r_j\otimes f_0)\\
 &\quad+\{n\}^2
       \left(\sum_j\delta_jr_j\otimes r_j-r_*\otimes r_*\right).
 \end{aligned}
\]
The first coefficient belongs to $\{n\}^2R_\alpha$,
and $\delta_j-n\in\{n\}R_\alpha$. Every term is therefore
divisible by $\{n\}^2$ in
$\overline{\HH}\otimes\overline{\HH}$, and hence in its image
in $\GG/[A,\GG]$. This proves the self identity.
\end{proof}

\begin{remark}\label{rem:small-checks}
For $n=2$, the hooks are $(2)$ and $(1,1)$, with
\[
 \kappa_0=2,\qquad \kappa_1=-2,\qquad
 c_0=q\{1\},\qquad c_1=q^{-1}\{1\}.
\]
At an exact squared-order-two root,
\[
 c_j(\alpha)=-2(-1)^j=\eps_2\,2(-1)^j,
 \qquad
 \left.q\frac{d}{dq}(c_0-c_1)\right|_{q=\alpha}=0.
\]
The self coefficients satisfy
$\delta_0(\alpha)=\delta_1(\alpha)=2$.

For $n=3$, the three hooks have
\[
 (\kappa_0,\kappa_1,\kappa_2)=(6,0,-6),
\]
and
\[
 (c_0,c_1,c_2)
 =\bigl(q^3\{2\}\{1\},\ \{1\}^2,\ q^{-3}\{2\}\{1\}\bigr).
\]
At an exact squared-order-three root,
$c_j(\alpha)=3(-1)^j$ and $\delta_j(\alpha)=3$.
Direct substitution gives
\[
 \sum_j(-1)^ja_j\in\{3\}^2R_\alpha,
 \qquad
 \sum_j\delta_j-9\in\{3\}^2R_\alpha.
\]
These computations check the normalization and signs in
both full-matching formulas.
\end{remark}

\begin{remark}\label{rem:composite}
No primality assumption occurs in Theorem~\ref{thm:relative-hecke}.
The cyclotomic inputs are $\{j\}(\alpha)\ne0$ for
$0<j<n$ when $\ord(\alpha^2)=n$, and the primitive product
identity for $\alpha^2$. The local theorem therefore applies
to composite packet sizes as well. For composite $n$, the
passage from local divisibility by $\{n\}^2$ at an exact
squared-order-$n$ root to a global congruence modulo $U_n(v)$
also requires the roots of proper-divisor squared order.
Those roots are treated separately in the global argument.
\end{remark}

\end{document}